\numberwithin{equation}{section}
\theoremstyle{plain}% default
\newtheorem{thm}{Theorem}[section]
\newtheorem{lem}{Lemma}[section]
\newtheorem{prop}{Proposition}[section]
\newtheorem{cor}{Corollary}[section]
\theoremstyle{definition}
\newtheorem{rmk}{Remark}[section]
\newcommand{\be}{\begin{equation}}
\newcommand{\ee}{\end{equation}}
\newcommand{\bea}{\begin{eqnarray}}
\newcommand{\eea}{\end{eqnarray}}
\newcommand{\beas}{\begin{eqnarray*}}
\newcommand{\eeas}{\end{eqnarray*}}
\newcommand{\rd}{{\text{\rm d}}}
\DeclareMathOperator{\Exp}{e}
\newcommand{\f}{\mathbf{f}}
\newcommand{\bp}{\mathbf{p}}
\newcommand{\bq}{\mathbf{q}}
\newcommand{\bu}{\mathbf{u}}
\newcommand{\bv}{\mathbf{v}}
\newcommand{\bw}{\mathbf{w}}
\newcommand{\bx}{\mathbf{x}}
\newcommand{\by}{\mathbf{y}}
\newcommand{\bg}{\mathbf{g}}
\newcommand{\mC}{\mathcal{C}}
\newcommand{\mL}{\mathcal{L}}
\newcommand{\mV}{\mathcal{V}}
\begin{document}

\title[Postprocessing Galerkin method in data assimilation]{Postprocessing Galerkin method applied to a data assimilation algorithm: a uniform in time error estimate}

\date{December 21, 2016}

%\author[C. Foias] {Ciprian Foias}
%\address{ \textnormal{(Ciprian Foias)} Department of Mathematics\\
%Texas A\&M University\\ College Station, TX 77843, USA.}
%\email[C. Foias]{foias@math.tamu.edu}

\author[C. F. Mondaini]{Cecilia F. Mondaini}
\address{\textnormal{(Cecilia F. Mondaini)} Department of Mathematics\\
Texas A\&M University\\ College Station, TX 77843, USA.}
\email[C. F. Mondaini] {cfmondaini@math.tamu.edu}

\author[E. S. Titi]{Edriss S. Titi}
\address{\textnormal{(Edriss S. Titi)} Department of Mathematics\\
Texas A\&M University\\ College Station, TX 77843, USA. {\bf ALSO}, Science Program, Texas A\&M University at Qatar, Doha, Qatar.}
\email[E. S. Titi] {titi@math.tamu.edu}

%\address{$\dagger$ to whom correspondence should be made}

\begin{abstract}
We apply the Postprocessing Galerkin method to a recently introduced continuous data assimilation (downscaling) algorithm for obtaining a numerical approximation of the solution of the two-dimensional Navier-Stokes equations corresponding to given measurements from a coarse spatial mesh. Under suitable conditions on the relaxation (nudging) parameter, the resolution of the coarse spatial mesh and the resolution of the numerical scheme, we obtain uniform in time estimates for the error between the numerical approximation given by the Postprocessing Galerkin method and the reference solution corresponding to the measurements. Our results are valid for a large class of interpolant operators, including low Fourier modes and local averages over finite volume elements. Notably, we use here the 2D Navier-Stokes equations as a paradigm, but our results apply equally to other evolution equations, such as the Boussinesq system of B\'enard convection and other oceanic and atmospheric circulation models.
\end{abstract}

\subjclass[2010]{35Q30, 37L65, 65M15, 65M70, 76B75, 93C20}

\keywords{data assimilation, downscaling, nudging, feedback control, two-dimensional Navier-Stokes equations, Galerkin method, postprocessing}

\maketitle

\section{Introduction}

Forecast models attempt to capture the future behavior of a real physical system by using only theoretical arguments. The purpose of data assimilation algorithms is to combine a forecast model with observational data in order to produce an even better approximation of reality. Our goal in this work is to investigate the continuous data assimilation algorithm introduced in the recent work \cite{AzouaniOlsonTiti2014} from a numerical analysis viewpoint, by providing an analytical estimate for the error obtained when using a spatial discretization scheme given by the Postprocessing Galerkin method \cite{GarciaNovoTiti1998,GarciaNovoTiti1999}.

The continuous data assimilation algorithm introduced in \cite{AzouaniOlsonTiti2014} was inspired by ideas from feedback control theory (see, e.g., \cite{AzouaniTiti2014,GhilHalemAtlas1978,GhilShkollerYangarber1977,Luenberger1971,LunasinTiti,Nijmeijer2001,Thau1973} and references therein) and consists in a nudging type approach that is applicable to a large class of dissipative evolution equations. In \cite{AzouaniOlsonTiti2014}, the algorithm is illustrated for the 2D Navier-Stokes equations and under the assumptions of continuous in time and error-free measurements. Further works extended this approach to more general situations, such as continuous in time data assimilation with stochastically noisy data \cite{BessaihOlsonTiti2015} and discrete in time observations with systematic errors \cite{FoiasMondainiTiti2016} (see also \cite{HaydenOlsonTiti2011}). Moreover, noisy observations were also considered in \cite{BlomkerLawStuartZygalakis2013,LawShuklaStuart2014} in the context of the 3DVAR filtering method.
 Applications of this algorithm to the physically important context of incomplete observations, as described, e.g., in \cite{CharneyHalemJastrow1969}, were done in the works
\cite{FarhatJollyTiti2015}-\cite{FarhatLunasinTiti2016-CharneyConj}.
%\cite{FarhatJollyTiti2015,FarhatLunasinTiti2016-2DNSE,FarhatLunasinTiti2016-3DBenard,FarhatLunasinTiti2016-2DBenard,FarhatLunasinTiti2016-CharneyConj}. 

Here, we also consider, as a paradigm, the forecast model given by the two-dimensional Navier-Stokes equations,
\be\label{NSEeqs}
 \frac{\partial \bu}{\partial t} - \nu \Delta \bu + (\bu \cdot \nabla) \bu + \nabla p = \f, \quad \nabla \cdot \bu = 0,
\ee
where $\bu = (u_1,u_2)$ and $p$ are the unknowns and represent the velocity vector field and the pressure, respectively; while $\nu > 0$ and $\f$ are given and denote the kinematic viscosity parameter and the density of volume body forces, respectively. We assume that $\bu$ and $p$ are functions of a spatial variable $\bx$ and a time variable $t$, with $\bx$ varying in a set $\Omega \subset \mathbb{R}^2$ and $t$ varying in the time interval $[t_0,\infty)$. For simplicity, $\f$ is assumed to be time-independent, although similar results are valid for a time-dependent $\f$ whose $L^2$-norm is uniformly bounded in time.

Our reference solution, whose exact value is unknown, is assumed to be a solution $\bu$ of \eqref{NSEeqs}. The given measurements, corresponding to $\bu$, are observed from a coarse spatial mesh and are assumed, for simplicity, to be continuous in time and error-free, as in \cite{AzouaniOlsonTiti2014}. We denote the operator used for interpolating these measurements in space by $I_h$, where $h$ denotes the resolution of the coarse spatial mesh of the observed measurements. Thus, the interpolated measurements are represented by $I_h(\bu)$. Since the initial condition $\bu(t_0)$ for $\bu$ is missing, one cannot compute $\bu$ by integrating \eqref{NSEeqs} directly. The idea consists then in recovering the exact value of the reference solution $\bu$ by using the given measurements, $I_h(\bu)$, through an approximate model. In other words, our purpose here is to provide a downscaling algorithm for recovering the fine scales of $\bu$ from the coarse scale
measurements $I_h(\bu)$.
%Thus, we assume that the measurements are coarse scale representations of a reference solution $\bu$ of \eqref{NSEeqs}, whose exact value is unknown. For simplicity, we also assume, as in \cite{AzouaniOlsonTiti2014}, that the measurements are error-free and continuous in time, but similar results can be obtained for the more general situations considered in \cite{BessaihOlsonTiti2015,FoiasMondainiTiti2016}. We denote the operator used for interpolating these measurements in space by $I_h$, where $h$ denotes the resolution of the coarse spatial mesh. Since the initial condition $\bu(t_0)$ for $\bu$ is missing, one cannot compute $\bu$ by integrating \eqref{NSEeqs} directly. The idea consists then in recovering the exact value of the reference solution $\bu$ by using $I_h(\bu)$, through an approximate model.

In \cite{AzouaniOlsonTiti2014}, this is done by seeking for an approximate solution $\bv = (v_1,v_2)$ satisfying the following system, for $(\bx,t) \in \Omega \times I \subset \mathbb{R}^2 \times \mathbb{R}$,
\be\label{DataAssAlg}
 \frac{\partial \bv}{\partial t} - \nu \Delta \bv + (\bv \cdot \nabla) \bv + \nabla \pi = \f - \beta(I_h(\bv) - I_h(\bu)), \quad \nabla \cdot \bv = 0,
\ee
where the unknown $\pi$ is the pressure of the approximate flow $\bv$; $\nu > 0$ and $\f$ are the same viscosity parameter and forcing term from \eqref{NSEeqs}, respectively; and $\beta$ is the relaxation (nudging) parameter. The second term in the right-hand side of the first equation in \eqref{DataAssAlg} is called the feedback control term and its role is to force (or nudge) the coarse spatial scales of the approximating solution $\bv$ towards the coarse spatial scales of the reference solution $\bu$, which is done by suitably tuning the relaxation parameter $\beta$. In \cite[Theorems 1 and 2]{AzouaniOlsonTiti2014}, the authors prove that, provided $\beta$ is large enough and $h$ is sufficiently small, both depending on the physical parameters, the approximate solution $\bv$ of \eqref{DataAssAlg}, corresponding to an arbitrary initial data $\bv_0$, converges, exponentially in time, to the reference solution $\bu$
of \eqref{NSEeqs}.

The aim of this paper is to obtain a numerical approximation of $\bv$, which is done here by using the Postprocessing Galerkin method \cite{GarciaNovoTiti1998,GarciaNovoTiti1999}, and thus indirectly approximate the reference solution $\bu$. First, let us rewrite the system of equations \eqref{DataAssAlg} in the following equivalent functional form
\be\label{DataAssAlgFunc}
\frac{\rd \bv}{\rd t} + \nu A \bv + B(\bv,\bv) = \bg - \beta P_{\sigma}(I_h(\bv) - I_h(\bu)),
\ee
where $P_\sigma$ is the orthogonal projection of $(L^2(\Omega))^2$ onto the phase space $H$ associated to \eqref{DataAssAlg}, which is endowed with the norm of $(L^2(\Omega))^2$, $|\cdot|_{L^2}$; $A = -P_\sigma\Delta$ is the Stokes operator; $B(\bv,\bv) = P_\sigma [(\bv \cdot \nabla)\bv]$, a bilinear operator (see section \ref{secPreliminaries} for more detailed definitions); and $\bg = P_\sigma \f$. Since $A$ is a positive and self-adjoint operator with compact inverse, it admits an orthonormal basis of eigenvectors $\{\bw_i\}_{i\in \mathbb{N}}$. Then, for each $N \in \mathbb{N}$, we can consider the finite-dimensional space $H_N = \text{span}\{\bw_1,\ldots,\bw_N\} = P_N H$, with $P_N$ denoting the orthogonal projection of $H$ onto $H_N$. A numerical approximation of the solution $\bv$ of \eqref{DataAssAlgFunc} can be obtained by computing the Galerkin approximation $\bv_N \in P_N H$, which satisfies the following system of ordinary differential equations
\be\label{DataAssAlgFuncGalerkin}
\frac{\rd \bv_N}{\rd t} + \nu A \bv_N + P_N B(\bv_N,\bv_N) = P_N \bg - \beta P_N P_{\sigma}(I_h(\bv_N) - I_h(\bu)).
\ee

Notice that, since $\bv_N \in P_N H$, the error committed in approximating $\bv$ by $\bv_N$ must be greater than or equal to the error associated with the best approximation of $\bv$ in $P_N H$, $P_N \bv$, i.e.
\[
 |\bv - \bv_N|_{L^2} \geq |\bv - P_N \bv|_{L^2} = |Q_N \bv|_{L^2},
\]
where $Q_N = I - P_N$.

The Postprocessing Galerkin method provides us with an efficient way of obtaining a better approximation of $\bv$ than $\bv_N$. The idea consists in complementing the finite-dimensional approximation $\bv_N \in P_N H$ of $\bv$ with a suitable part lying in the complement space $Q_N H$. Adapting the algorithm introduced in \cite{GarciaNovoTiti1998,GarciaNovoTiti1999} to our situation, we can summarize it in the following steps:

For obtaining an approximation of $\bv$ at a certain time $T > t_0$,
\begin{enumerate}[(i)]
 \item\label{PPGMi} Integrate \eqref{DataAssAlgFuncGalerkin} in time, over the time interval $[t_0,T]$, to obtain $\bv_N$ and compute $\bv_N(T)$;
 \item\label{PPGMii} Obtain $\bq_N$ satisfying $\nu A \bq_N = Q_N[\f - B(\bv_N(T),\bv_N(T))]$;
 \item\label{PPGMiii} Compute the new approximation to $\bv(T)$, and hence to $\bu(T)$, given by $\bv_N(T) + \bq_N$.
\end{enumerate}

%The difference to the algorithm introduced in \cite{GarciaNovoTiti1998,GarciaNovoTiti1999} is that their first step concerns the computation of the Galerkin approximation $\bu_N$ corresponding to system \eqref{NSEeqs} instead. In our context, $\bu$ is the unknown reference solution and thus, in particular, the initial condition necessary for the integration of the Galerkin approximation of \eqref{NSEeqs} is missing. This is the reason for considering the approximate system \eqref{DataAssAlgFuncGalerkin}, for which the initial condition can be chosen arbitrarily.

The equation satisfied by $\bq_N$ in step \eqref{PPGMii} is inspired by the definition of the approximate inertial manifold introduced in \cite{FoiasManleyTemam1988}, in which the authors obtain an approximation of $Q_N \bu$, with $\bu$ being a solution of \eqref{NSEeqs}, given by
\be\label{defPhi1}
 Q_N \bu \approx \Phi_1(P_N \bu) = (\nu A)^{-1} Q_N[\f - B(P_N \bu,P_N \bu)].
\ee
The graph of the mapping $\Phi_1: P_N H \to Q_N H$ is called an approximate inertial manifold. This approximation is obtained by applying the projection $Q_N$ to equation \eqref{NSEeqs} and, based on theoretical arguments, neglecting all lower order terms, i.e., the time derivative of $Q_N \bu$ and the nonlinear terms involving $Q_N \bu$, in comparison to the remaining terms. Since our idea is to ultimately obtain an approximation of $\bu$, it is natural to consider as an approximation of $Q_N \bv$ the same type of approximation used for $Q_N \bu$ in \eqref{defPhi1}, in which $P_N \bu$ is replaced by $\bv_N$, given that this is the approximation of $P_N \bu$ that we consider.

Our results show that the Postprocessing Galerkin method yields a better convergence rate than the standard Galerkin method, as also obtained in \cite{GarciaNovoTiti1999}.
%which, with respect to $N$, is the same rate obtained in \cite{GarciaNovoTiti1999}.
However, an important difference in our results is that our error estimate is uniform in time, while in \cite{GarciaNovoTiti1998,GarciaNovoTiti1999} it grows exponentially in time. This remarkable difference is due to the fact that the approximate system \eqref{DataAssAlgFuncGalerkin} has a stabilizing mechanism imposed by the feedback control term, which kills the instabilities in the large (coarse) spatial scales caused by the nonlinear term. As a consequence, as proved in \cite[Theorems 1 and 2]{AzouaniOlsonTiti2014}, under suitable conditions on the parameters $\beta$ and $h$, the solutions of \eqref{DataAssAlg}, corresponding to arbitrary initial data, all converge to the same reference solution $\bu$. This shows that, with the appropriate conditions on $\beta$ and $h$, system \eqref{DataAssAlg} is globally asymptotically stable. Hence, the Galerkin approximation $\bv_N$ of $\bv$ converges to $\bv$ uniformly in time, as $N$ tends to infinity.

This stabilizing effect was also observed by the numerical computations performed in \cite{GeshoOlsonTiti2015} by using the Galerkin method (see also \cite{AltafTitiKnioZhaoMcCabeHoteit2015}), which showed that the required conditions on the parameters $\beta$ and $h$ are remarkably less strict than suggested by the analytical results in \cite{AzouaniOlsonTiti2014}. Consequently, this work provides a rigorous analytical justification for the computational study in \cite{AltafTitiKnioZhaoMcCabeHoteit2015,GeshoOlsonTiti2015}.

It is worth mentioning that the introduction of the Postprocessing Galerkin method was preceded by another spectral method also derived from the standard Galerkin approach and inspired by the idea of approximate inertial manifold, known as the Nonlinear Galerkin method (see, e.g., \cite{DevulderMarionTiti1993,FoiasJollyKevrekidisSellTiti1988, JollyKevrekidisTiti1990, MarionTemam1989} and references therein). The main difference between the two approaches is that, in the Postprocessing Galerkin method, the integration of the low modes does not use the information about the high modes: only at the final step the high modes are used in order to refine the solution (see step \eqref{PPGMii}, above). On the other hand, in the Nonlinear Galerkin method, the time step integration of the low modes is continuously updated by using the information on the high modes. For this reason, the Nonlinear Galerkin method, although providing a better error estimate in comparison to the standard Galerkin method, has the
disadvantage of being a lot more computationally expensive and thus, in
practice, less efficient (cf. \cite{GarciaNovoTiti1998,GarciaNovoTiti1999,GrahamSteenTiti1993,MargolinTitiWynne2003}). In an attempt of obtaining an algorithm that would overcome this disadvantage, while still keeping the better accuracy of the Nonlinear Galerkin method, the Postprocessing Galerkin method was developed.

Moreover, it is shown in \cite{MargolinTitiWynne2003}, by using a truncation analysis argument, that the Postprocessing Galerkin method is more than a technique for improving efficiency. The authors show that the Postprocessing Galerkin method is actually the correct leading order approximating scheme, and not the standard Galerkin method, as it is commonly believed.

We emphasize that, the case of continuous in time measurements and continuous in time Galerkin approximations were considered here for simplicity and in order to fix ideas. However, combining ideas from this work with those in \cite{FoiasMondainiTiti2016}, one can extend our results to the case of discrete in time measurements with errors and to discretize, accordingly, the Galerkin scheme \eqref{DataAssAlgFuncGalerkin} in time. This is a subject of future work. Moreover, we considered here the Galerkin approximation based on the eigenfunctions of the Stokes operator. However, following the ideas from \cite{GarciaArchillaTiti2000}, one can also employ the Postprocessing Galerkin method in the context of finite elements and apply it to our data assimilation scheme. Furthermore, since our data assimilation algorithm is inspired by feedback control ideas, we expect that our results will equally apply to feedback control systems.

This paper is organized as follows. In section \ref{secPreliminaries}, we provide a summary of the necessary background related to the two-dimensional Navier-Stokes equations that will be needed in the sequel. Section \ref{secMainResults} contains the main results of this paper. The purpose is to show a uniform in time estimate of the error committed when applying the Postprocessing Galerkin method described in \eqref{PPGMi}-\eqref{PPGMiii}, above, to system \eqref{DataAssAlg}, in order to obtain an approximation of the reference solution $\bu$ satisfying \eqref{NSEeqs} (Theorems \ref{thmerrorPPGM} and \ref{thmerrorPPGMIh}). We divide the presentation into two subsections: subsection \ref{subsecFouriermodescase} deals with the case of an interpolant operator given by a low Fourier modes projector; while subsection \ref{subsecAnotherClassIntOp} deals with a more general class of interpolant operators satisfying suitable properties, for which an example is given by the operator defined as local averages over
finite volume elements, in the case of periodic boundary conditions. Finally, in the Appendix, we show
for completeness that such example of interpolant operator verifies the properties considered in subsection \ref{subsecAnotherClassIntOp}.

\section{Preliminaries}\label{secPreliminaries}

In this section, we briefly recall the necessary background on the two-dimensional incompressible Navier-Stokes equations \eqref{NSEeqs}. For further details, see, e.g., \cite{bookcf1988,FMRT2001,Temambook1995,Temambook2001}.

Consider a spatial domain $\Omega \subset \mathbb{R}^2$ and a time interval $[t_0, \infty) \subset \mathbb{R}$. We assume, for simplicity, that the forcing $\f$ is time-independent and lies in the space $L^2(\Omega)^2$. We remark, however, that similar results are also valid in the case $\f \in L^{\infty}([t_0,\infty);L^2(\Omega)^2)$.

We consider two types of boundary conditions for system \eqref{NSEeqs}: periodic or no-slip Dirichlet. In the periodic case, we consider the fundamental domain $\Omega = (0,L) \times (0,L)$. Moreover, we assume that the velocity field and the pressure are periodic with period $L$ in each spatial direction $x_i$, $i=1,2$, and that $\f$ has zero spatial average, i.e.,
\[
 \int_{\Omega} \f(\bx) \rd \bx = 0.
\]
In the no-slip Dirichlet case, we consider $\Omega$ as a bounded subset of $\mathbb{R}^2$ with sufficiently smooth boundary $\partial \Omega$ and assume that $\bu = 0$ on $\partial \Omega$.

The definition of the space of test functions, denoted here by $\mV$, depends on the type of boundary condition being considered. In the periodic case, $\mV$ is defined as the set of all $L$-periodic trigonometric polynomials from $\mathbb{R}^2$ to $\mathbb{R}^2$ that are divergence free and have zero spatial average. In the no-slip Dirichlet case, we define $\mV$ as the family of $C^\infty$ vector fields with values in $\mathbb{R}^2$ that are divergence free and compactly supported in $\Omega$.

We denote by $H$ the closure of $\mV$ with respect to the norm in $L^2(\Omega)^2$, and by $V$ the closure of $\mV$ under the $H^1(\Omega)^2$ Sobolev norm. Following the notation from \cite{FMRT2001}, we denote the inner products in $H$ and $V$ by $( \cdot,\cdot)_{L^2} $ and $(\!( \cdot, \cdot )\!)_{H^1}$, respectively. They are defined as
\[
 (\bu,\bv)_{L^2} = \int_{\Omega} \bu(\bx) \cdot \bv(\bx) \rd \bx, \quad \forall \bu,\bv \in H,
\]
\[
 (\!(\bu,\bv)\!)_{H^1} = \int_{\Omega} \sum_{i=1}^2  \frac{\partial \bu}{\partial \bx_i} \cdot \frac{\partial \bv}{\partial x_i} \rd \bx, \quad \forall \bu,\bv \in V,
\]
and the associated norms are given by $|\bu|_{L^2} = (\bu,\bu)^{1/2}_{L^2}$, $\|\bu\|_{H_1} = (\!(\bu,\bu)\!)^{1/2}_{H^1}$.

The fact that $\|\cdot\|_{H^1}$ defines a norm in $V$ is justified via the Poincar\'e inequality, given by
\be\label{ineqPoincare}
 \lambda_1^{1/2}|\bu|_{L^2} \leq \|\bu\|_{H^1}, \quad \forall \bu \in V,
\ee
where $\lambda_1$ is the first eigenvalue of the Stokes operator, defined in \eqref{defStokesop}, below.

Given $R > 0$, we denote by $B_H(R)$ and $B_V(R)$ the closed balls centered at $0$ with radius $R$, with respect to the norms in $H$ and $V$, respectively.

We also consider the dual spaces of $H$ and $V$, denoted by $H'$ and $V'$, respectively. After identifying $H$ with its dual, we obtain $V \subseteq H  \subseteq V'$, with the injections being continuous, compact, and each space dense in the following one. Moreover, we denote the duality product between $V$ and $V'$ by $\langle \cdot, \cdot \rangle_{V',V}$.

Let $P_{\sigma}$ be the Leray-Helmholtz projector, i.e., the orthogonal projection of $L^2(\Omega)^2$ onto $H$. Applying $P_{\sigma}$ to system \eqref{NSEeqs}, we obtain its following equivalent functional formulation:
\be\label{functionaleq}
 \frac{\rd \bu}{\rd t} + \nu A\bu + B(\bu,\bu) = \bg \quad \mbox{in $V'$},
\ee
where $\bg = P_{\sigma} \f \in H$, $B:V\times V \rightarrow V'$ is the bilinear operator defined as the continuous extension of the operator given by
\[
 B(\bu,\bv) = P_{\sigma}((\bu \cdot \nabla)\bv), \quad \forall \bu, \bv \in \mV,
\]
and $A: D(A) \subseteq V \rightarrow V'$ is the Stokes operator, defined as the continuous extension of
\be\label{defStokesop}
 A\bu = -P_{\sigma} \Delta \bu, \quad \forall \bu \in \mV,
\ee
with the domain of $A$, $D(A)$, given by $V\cap H^2(\Omega)^2$.

The Stokes operator is a positive and self-adjoint operator with compact inverse. Therefore, it admits an orthonormal basis of eigenvectors $\{\bw_m\}_{m\in \mathbb{N}}$ associated with a nondecreasing sequence of positive eigenvalues $\{\lambda_m\}_{m\in\mathbb{N}}$, with $\lambda_m \to \infty$ as $m \to \infty$.

We also consider, for each $N \in \mathbb{N}$, the low modes projector $P_N$, which is defined as the orthogonal projector of $H$ onto the subspace $H_N = \textnormal{span}\{ \bw_1,\ldots,\bw_N \}$. Moreover, we denote $Q_N = I - P_N$.

The bilinear operator $B$ satisfies the following property:
\be\label{PropBilTerm}
 \langle B(\bu_1,\bu_2), \bu_3\rangle_{V',V} = -\langle B(\bu_1,\bu_3), \bu_2 \rangle_{V',V}, \quad \forall \bu_1,\bu_2, \bu_3 \in V.
\ee

%Also, in the two-dimensional space-periodic case, we have
%\be\label{propBuuAuzero}
% ( B(\bu,\bu),A\bu )_{L^2} = 0, \quad \forall \bu \in D(A).
%\ee

Recall the Br\'ezis-Gallouet inequality \cite{BrezisGallouet1980,FoiasManleyTemamTreve1983}, given by
\be\label{ineqBrezisGallouet}
\|\bu\|_{L^\infty} \leq c_B \|\bu\|_{H^1} \left[ 1 + \log\left( \frac{|A\bu|_{L^2}}{\lambda_1^{1/2} \|\bu\|_{H^1}} \right) \right]^{1/2}, \quad \forall \bu \in D(A),
\ee
where $c_B$ is a nondimensional (scale invariant) constant, and $\|\cdot\|_{L^\infty}$ denotes the usual norm in $L^\infty(\Omega)^2$.

We now recall some inequalities satisfied by the bilinear term $B$. Using Br\'ezis-Gallouet inequality \eqref{ineqBrezisGallouet}, we obtain that, for every $\bu_1 \in D(A)$ with $\bu_1 \neq 0$ and every $\bu_2 \in V$ and $\bu_3 \in H$,
\be\label{ineqConseqBrezisGallouet}
|(B(\bu_1,\bu_2),\bu_3)_{L^2}| \leq c_B \|\bu_1\|_{H^1} \|\bu_2\|_{H^1} |\bu_3|_{L^2} \left[ 1 + \log\left( \frac{|A \bu_1|_{L^2}}{\lambda_1^{1/2}\|\bu_1\|_{H^1}}\right) \right]^{1/2}.
\ee

We also recall the following logarithmic inequalities from \cite{Titi1987}:

For every $\bu_1, \bu_2, \bu_3 \in V$, with $\bu_3 \neq 0$,
\be\label{ineqTiti1}
|(B(\bu_1,\bu_2),\bu_3)_{L^2}| \leq c_T \|\bu_1\|_{H^1} \|\bu_2\|_{H^1} |\bu_3|_{L^2} \left[ 1 + \log\left( \frac{\|\bu_3\|_{H^1}}{\lambda_1^{1/2}|\bu_3|_{L^2}}\right) \right]^{1/2}.
\ee

For every $\bu_1 \in V$, and every $\bu_2, \bu_3 \in D(A)$, with $\bu_2 \neq 0$,
\be\label{ineqTiti2}
|(B(\bu_1,\bu_2),A\bu_3)_{L^2}| \leq c_T \|\bu_1\|_{H^1} \|\bu_2\|_{H^1} |A \bu_3|_{L^2} \left[ 1 + \log\left( \frac{|A \bu_2|_{L^2}}{\lambda_1^{1/2}\|\bu_2\|_{H^1}}\right) \right]^{1/2}.
\ee

Also, for every $\bu_1,\bu_2 \in V$, we have
\[
 B(\bu_1,\bu_1) - B(\bu_2,\bu_2) = B\left( \bu_1 - \bu_2, \frac{\bu_1 + \bu_2}{2} \right) + B\left( \frac{\bu_1 + \bu_2}{2}, \bu_1 - \bu_2 \right).
\]

Then, it follows from the result in \cite[Proposition 6.1]{bookcf1988} that, for every $\alpha > 1/2$,
\be\label{ineqA-alphadiffBuBv}
 |A^{-\alpha}(B(\bu,\bu) - B(\bv,\bv))| \leq c_\alpha  |\Omega|^{\alpha - \frac{1}{2}} \|\bu + \bv\|_{H^1} |\bu - \bv|_{L^2},
\ee
where $|\Omega|$ denotes the area of $\Omega$ and $c_\alpha$ is a constant depending on $\alpha$ through the Sobolev constants from the Sobolev embeddings of $H^{2\alpha}(\mathbb{R}^2)$ into $L^{\infty}(\mathbb{R}^2)$, and of $H^s(\mathbb{R}^2)$ into $L^q(\mathbb{R}^2)$, with $1 > s > (2 - 2 \alpha)$, and $q = 2/(1-s)$. Thus, $c_\alpha \to \infty$ as $\alpha \to \frac{1}{2}^+$.

Along this paper, we denote by $c$ a positive absolute constant or a nondimensional positive constant depending on $\Omega$, whose value may change from line to line; while the capital letter $C$ denotes a dimensional constant, depending on the physical parameters, such as $\nu$, $\lambda_1$ and $|\bg|_{L^2}$.

Finally, we recall some results concerning uniform bounds, with respect to the norms in $H$ and $V$, for the solutions of  \eqref{NSEeqs}. It is well-known that, given $\bu_0 \in H$, there exists a unique weak solution of \eqref{NSEeqs} satisfying $\bu(t_0) = \bu_0$ and $\bu \in \mC([t_0, \infty); H) \cap L^2_{\text{loc}}(t_0, \infty; V)$, with $\rd \bu/\rd t \in L^2_{\text{loc}}(t_0, \infty; V')$. From now on, whenever we refer to a solution of \eqref{NSEeqs}, we mean a solution in this sense.

The proof of the next proposition can be found in any of the references listed above (\cite{bookcf1988,FMRT2001,Temambook1995,Temambook2001}). Recall the definition of the Grashof number, which is the nondimensional quantity given by
\[  G = \frac{|\bg|_{L^2}}{\nu^2\lambda_1}.
\]

%Finally, we recall some results concerning the existence of absorbing balls in $H$ and $V$ for the solutions of \eqref{NSEeqs}. Notably, the existence of such absorbing balls is related to the well-known result about the existence of a global attractor for \eqref{NSEeqs}, which is a compact and finite-dimensional subset of $H$. The proof of the next proposition can be found in any of the references listed above (\cite{bookcf1988,FMRT2001,Temambook1995,Temambook2001}).

\begin{prop}\label{propunifbounds}
 Let $\bu_0 \in H$ and let $\bu$ be a solution of \eqref{NSEeqs} satisfying $\bu(t_0) = \bu_0$. Then, there exists $T = T(\nu, \lambda_1, |\bg|_{L^2},|\bu_0|_{L^2}) \geq t_0$ such that the following hold:
 \begin{enumerate}[(i)]
  \item In the case of periodic boundary conditions,
  \be\label{unifboundperiodic} |\bu(t)|_{L^2} \leq 2\nu G, \quad \|\bu(t)\|_{H^1} \leq 2\nu \lambda_1^{1/2} G, \quad \forall t \geq T.
  \ee
  %\be\label{boundAuperiodic} |A\bu(t)|_{L^2} \leq c \nu \lambda_1(1+G)^2.
  %\ee
  \item In the case of no-slip boundary conditions,
  \be\label{unifboundnoslip} |\bu(t)|_{L^2} \leq 2\nu G, \quad \|\bu(t)\|_{H^1} \leq c \nu \lambda_1^{1/2} G \Exp^{\frac{G^4}{2}}, \quad \forall t \geq T.
  \ee
  %\be\label{boundAunoslip} |A\bu(t)|_{L^2} \leq c \nu \lambda_1 G [1 + (1+ G^2 \Exp^{G^4})(1 + \Exp^{G^4} + G^4 \Exp^{G^4})]^{1/2}.
  %\ee
 \end{enumerate}
\end{prop}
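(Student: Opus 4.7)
My plan is to derive the two bounds by standard energy estimates on \eqref{functionaleq}, leveraging the antisymmetry property \eqref{PropBilTerm}, followed by a (uniform) Gronwall-type argument.

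For the $L^2$ bound, which is common to both boundary conditions, I would take the duality pairing of \eqref{functionaleq} with $\bu$. By \eqref{PropBilTerm} the trilinear term vanishes, $\langle B(\bu,\bu),\bu\rangle_{V',V}=0$, leaving
$$\frac{1}{2}\frac{\rd}{\rd t}|\bu|_{L^2}^2+\nu\|\bu\|_{H^1}^2=(\bg,\bu)_{L^2}.$$
Estimating the right-hand side by Cauchy-Schwarz/Young as $|\bg|_{L^2}^2/(2\nu\lambda_1)+(\nu\lambda_1/2)|\bu|_{L^2}^2$ and using Poincar\'e \eqref{ineqPoincare}, I obtain $\rd|\bu|_{L^2}^2/\rd t+\nu\lambda_1|\bu|_{L^2}^2\le |\bg|_{L^2}^2/(\nu\lambda_1)$. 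Scalar Gronwall then yields $|\bu(t)|_{L^2}^2\le |\bu_0|_{L^2}^2\Exp^{-\nu\lambda_1(t-t_0)}+|\bg|_{L^2}^2/(\nu^2\lambda_1^2)$, and choosing $T$ large enough that the initial-data term is, say, at most $3\nu^2G^2$ gives $|\bu(t)|_{L^2}\le 2\nu G$ for all $t\ge T$. Integrating the same differential inequality over $[t-\tau,t]$ with the dissipative time scale $\tau:=1/(\nu\lambda_1)$ and using the $L^2$ bound already proved also yields the auxiliary estimate
$$\int_{t-\tau}^{t}\|\bu(s)\|_{H^1}^2\,\rd s\le c\nu G^2,$$
which I will need for the no-slip case.

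For the $H^1$ bound in the periodic case, I would pair \eqref{functionaleq} with $A\bu$. The crucial periodic identity $(B(\bu,\bu),A\bu)_{L^2}=0$ again eliminates the nonlinearity, so
$$\frac{1}{2}\frac{\rd}{\rd t}\|\bu\|_{H^1}^2+\nu|A\bu|_{L^2}^2=(\bg,A\bu)_{L^2}\le\frac{|\bg|_{L^2}^2}{2\nu}+\frac{\nu}{2}|A\bu|_{L^2}^2.$$
Since $|A\bu|_{L^2}^2\ge\lambda_1\|\bu\|_{H^1}^2$, I obtain $\rd\|\bu\|_{H^1}^2/\rd t+\nu\lambda_1\|\bu\|_{H^1}^2\le|\bg|_{L^2}^2/\nu$, and a second application of scalar Gronwall (absorbing the initial $H^1$ norm by enlarging $T$; note $\|\bu_0\|_{H^1}$ may be infinite, so I would start the $H^1$ estimate from a slightly later time where $\|\bu\|_{H^1}$ is finite using the already-established $L^2$ integrability) yields $\|\bu(t)\|_{H^1}\le 2\nu\lambda_1^{1/2}G$ asymptotically.

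The no-slip case is the main obstacle, because $(B(\bu,\bu),A\bu)_{L^2}$ no longer vanishes. Here I would bound it with a Ladyzhenskaya/Agmon-type estimate $|(B(\bu,\bu),A\bu)_{L^2}|\le c|\bu|_{L^2}^{1/2}\|\bu\|_{H^1}|A\bu|_{L^2}^{3/2}$ and absorb the resulting $(\nu/4)|A\bu|_{L^2}^2$ into the dissipation by Young's inequality, obtaining
$$\frac{\rd}{\rd t}\|\bu\|_{H^1}^2\le \frac{|\bg|_{L^2}^2}{\nu}+\frac{c|\bu|_{L^2}^2}{\nu^3}\|\bu\|_{H^1}^2\,\|\bu\|_{H^1}^2.$$
Invoking $|\bu|_{L^2}\le 2\nu G$ valid for $t\ge T$, the coefficient of the last term becomes $cG^2\|\bu\|_{H^1}^2/\nu$. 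I then apply the uniform Gronwall lemma on the window $[t,t+\tau]$ with $\tau=1/(\nu\lambda_1)$: the $\|\bu\|_{H^1}^2$ time-integral from the first step controls the exponent by $\int_t^{t+\tau}(cG^2/\nu)\|\bu\|_{H^1}^2\le cG^4$, while the average $\tau^{-1}\int_t^{t+\tau}\|\bu\|_{H^1}^2\le c\nu^2\lambda_1 G^2$ and the forcing integral $\int_t^{t+\tau}|\bg|_{L^2}^2/\nu\le \nu^2\lambda_1 G^2$ furnish the prefactor. This yields $\|\bu(t+\tau)\|_{H^1}^2\le c\nu^2\lambda_1 G^2\Exp^{cG^4}$, that is, $\|\bu(t)\|_{H^1}\le c\nu\lambda_1^{1/2}G\Exp^{G^4/2}$ after harmlessly absorbing constants into $c$, which is precisely \eqref{unifboundnoslip}. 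The delicate point is choosing $\tau$ at the dissipative scale so the exponent comes out as $G^4$ and not as $G^4$ multiplied by a further dimensional factor.
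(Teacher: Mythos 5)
The paper offers no proof of Proposition \ref{propunifbounds}; it simply cites the standard references \cite{bookcf1988,FMRT2001,Temambook1995,Temambook2001}, and your argument is exactly the classical proof found there: the $L^2$ Gronwall estimate with the orthogonality \eqref{PropBilTerm}, the 2D periodic enstrophy identity $(B(\bu,\bu),A\bu)_{L^2}=0$, and the uniform Gronwall lemma combined with Agmon's inequality in the no-slip case. The only points you should tidy up are (a) selecting the intermediate time $t_1$ by a mean-value argument from $\int_{T_0}^{T_0+\tau}\|\bu\|_{H^1}^2\,\rd s\leq c\nu G^2$ so that $\|\bu(t_1)\|_{H^1}$ is itself controlled by $G$ and hence $T$ depends only on $\nu,\lambda_1,|\bg|_{L^2},|\bu_0|_{L^2}$ as claimed, and (b) the exponent in \eqref{unifboundnoslip} comes out as $cG^4$ with an absolute constant rather than exactly $G^4/2$ unless one tracks the Agmon and Young constants, a cosmetic discrepancy with the stated form.
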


In order to simplify the notation, we write the uniform bounds in the $H$ and $V$ norms from Proposition \ref{propunifbounds} by using constants $M_0$ and $M_1$, respectively, i.e.,
\be\label{unifboundsattractor}
|\bu(t)|_{L^2} \leq M_0, \quad \|\bu(t)\|_{H^1} \leq M_1, \quad \forall t \geq T.
\ee
Notice that the value of $M_1$ changes according to the boundary condition being considered.

The following theorem follows immediately from the result proved in \cite[Theorem 1.1]{FoiasManleyTemam1988} (see also \cite{Titi1990}).

\begin{thm}\label{unifboundsQm}
 Let $\bu_0 \in H$ and let $\bu$ be a solution of \eqref{NSEeqs} satisfying $\bu(t_0) = \bu_0$. Then, there exists $T = T(\nu, \lambda_1,|\bg|_{L^2},|\bu_0|_{L^2}) \geq t_0$ such that
 \be\label{boundqL2}
|Q_N \bu (t)|_{L^2} \leq C_0 \frac{L_N}{\lambda_{N+1}}, \quad \forall t \geq T, \quad \forall N \in \mathbb{N},
\ee
\be\label{boundqH1}
\|Q_N \bu (t)\|_{H^1} \leq C_1 \frac{L_N}{\lambda_{N+1}^{1/2}}, \quad \forall t \geq T, \quad \forall N \in \mathbb{N},
\ee
where
\be\label{defLN}
L_N = \left[ 1 + \log\left( \frac{\lambda_N}{\lambda_1}\right) \right]^{1/2},
\ee
\be\label{defC0}
C_0 = c\left( \frac{|Q_N \bg|_{L^2} + M_1^2}{\nu} \right),
\ee
\be\label{defC1}
C_1 = c \left( \frac{|Q_N \bg|_{L^2} + M_1^2}{\nu} + \frac{M_0 M_1^2}{\nu^2} \right),
\ee
and $M_0$ and $M_1$ are as given in \eqref{unifboundsattractor}.
\end{thm}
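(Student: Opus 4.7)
The plan is to apply the high-mode projector $Q_N$ to the functional Navier--Stokes equation \eqref{functionaleq} to obtain, for $\bq(t):=Q_N\bu(t)$,
\[
\frac{\rd\bq}{\rd t}+\nu A\bq = Q_N\bg - Q_N B(\bu,\bu),
\]
and then derive two differential inequalities for $|\bq|_{L^2}^2$ and $\|\bq\|_{H^1}^2$ that, once combined with the spectral-gap bounds $\|\bq\|_{H^1}^2\ge\lambda_{N+1}|\bq|_{L^2}^2$ and $|A\bq|_{L^2}^2\ge\lambda_{N+1}\|\bq\|_{H^1}^2$ and passed through Gr\"onwall's lemma, produce \eqref{boundqL2} and \eqref{boundqH1} for all $t$ beyond the absorbing time $T$ supplied by Proposition \ref{propunifbounds}.

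For \eqref{boundqL2} I take the $(\cdot,\bq)_{L^2}$ inner product, yielding
\[
\tfrac{1}{2}\tfrac{\rd}{\rd t}|\bq|_{L^2}^2 + \nu\|\bq\|_{H^1}^2 = (Q_N\bg,\bq)_{L^2} - (B(\bu,\bu),\bq)_{L^2}.
\]
The nonlinear term is the crux. Writing $\bu=\bp+\bq$ with $\bp=P_N\bu$ and using the antisymmetry \eqref{PropBilTerm} kills $(B(\bp,\bq),\bq)$ and $(B(\bq,\bq),\bq)$, leaving $(B(\bp,\bp),\bq)+(B(\bq,\bp),\bq)$. The first piece is controlled by \eqref{ineqConseqBrezisGallouet} applied with $\bu_1=\bu_2=\bp$, where the key observation $|A\bp|_{L^2}\le\lambda_N^{1/2}\|\bp\|_{H^1}$ (valid because $\bp\in H_N$) converts the Br\'ezis--Gallouet logarithm into the factor $L_N$ of \eqref{defLN}, so that $|(B(\bp,\bp),\bq)|\le c\,M_1^2 L_N|\bq|_{L^2}$. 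The second piece is rewritten as $-(B(\bq,\bq),\bp)$ and estimated by the 2D Ladyzhenskaya inequality $\|\bq\|_{L^4}^2\le c|\bq|_{L^2}\|\bq\|_{H^1}$, giving $c\,M_1|\bq|_{L^2}\|\bq\|_{H^1}$; Young's inequality sends the $\|\bq\|_{H^1}$ factor into the dissipation and leaves a residue proportional to $M_1^2|\bq|_{L^2}^2/\nu$, which is absorbed by $\nu\lambda_{N+1}|\bq|_{L^2}^2$ (for $N$ so small that the absorption fails, \eqref{boundqL2} is implied by the trivial bound $|\bq|_{L^2}\le M_0$). All together,
\[
\tfrac{\rd}{\rd t}|\bq|_{L^2}^2 + \nu\lambda_{N+1}|\bq|_{L^2}^2 \le \frac{c\,(|Q_N\bg|_{L^2}+M_1^2 L_N)^2}{\nu\,\lambda_{N+1}},
\]
and Gr\"onwall integrates this to \eqref{boundqL2} once the transient has died.

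For \eqref{boundqH1} I test the $\bq$-equation against $A\bq$ and follow the same blueprint, this time producing $\nu|A\bq|_{L^2}^2$ as the dissipation. Decomposing $B(\bu,\bu)$ into its four bilinear pieces, the $(\bp,\bp)$ piece is controlled by \eqref{ineqTiti2} (again harvesting $L_N$ from $|A\bp|\le\lambda_N^{1/2}\|\bp\|_{H^1}$); the mixed $(\bp,\bq)$ and $(\bq,\bp)$ pieces are handled via Br\'ezis--Gallouet on $\bp$ (yielding $\|\bp\|_{L^\infty}\le c\,M_1 L_N$) together with H\"older's inequality, with the resulting $\|\bq\|_{H^1}^2$ factor replaced by $M_1^2$; and the $(\bq,\bq)$ piece by 2D Gagliardo--Nirenberg $\|\bq\|_{L^4}\|\nabla\bq\|_{L^4}\le c|\bq|_{L^2}^{1/2}\|\bq\|_{H^1}|A\bq|_{L^2}^{1/2}$, after which substituting the already-proven bound \eqref{boundqL2} on $|\bq|_{L^2}$ renders this contribution subleading. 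After Young's absorption of $|A\bq|_{L^2}^2$ and the gap $|A\bq|^2\ge\lambda_{N+1}\|\bq\|_{H^1}^2$, one obtains a differential inequality of the form
\[
\tfrac{\rd}{\rd t}\|\bq\|_{H^1}^2 + \nu\lambda_{N+1}\|\bq\|_{H^1}^2 \le \frac{c\,L_N^2}{\lambda_{N+1}}\Bigl(\tfrac{|Q_N\bg|_{L^2}+M_1^2}{\nu^{1/2}}+\tfrac{M_0 M_1^2}{\nu^{3/2}}\Bigr)^2,
\]
and Gr\"onwall delivers \eqref{boundqH1}.

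The main obstacle is ensuring that every logarithmic factor arising in the nonlinear estimates is generated by a quantity living in $P_N H$, so that the bound $|A\bp|\le\lambda_N^{1/2}\|\bp\|_{H^1}$ turns it into $L_N$ without dragging in $|A\bu|_{L^2}$ or $|A\bq|_{L^2}$, neither of which is a priori bounded in terms of $M_0$ and $M_1$ alone. This is precisely what the antisymmetry \eqref{PropBilTerm} together with the splitting $\bu=P_N\bu+Q_N\bu$ accomplish, and is the essence of the approximate-inertial-manifold estimate of Foias--Manley--Temam cited by the theorem.
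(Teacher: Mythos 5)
The paper does not prove this theorem at all: it is quoted verbatim from \cite[Theorem 1.1]{FoiasManleyTemam1988} (see also \cite{Titi1990}), so there is no internal proof to compare against. Your argument is a from-scratch reconstruction of that classical proof, and the strategy is the right one: project \eqref{functionaleq} onto $Q_N H$, run energy and enstrophy estimates on $\bq$, use the antisymmetry \eqref{PropBilTerm} and the splitting $\bu=\bp+\bq$ so that every Br\'ezis--Gallouet logarithm is evaluated on an element of $P_NH$ and collapses to $L_N$ via $|A\bp|_{L^2}\le\lambda_N^{1/2}\|\bp\|_{H^1}$, then invoke the spectral gap and Gr\"onwall. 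Your closing remark correctly identifies this as the essential mechanism.

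Two steps, however, need repair. First, your fallback for small $N$ --- ``the absorption fails, so use $|\bq|_{L^2}\le M_0$'' --- does not close the argument: for large Grashof number $G$ the trivial bound only covers $\lambda_{N+1}\lesssim \lambda_1 G$, while absorption of $cM_1^2|\bq|_{L^2}^2/\nu$ into $\nu\lambda_{N+1}|\bq|_{L^2}^2$ requires $\lambda_{N+1}\gtrsim M_1^2/\nu^2\sim\lambda_1G^2$, leaving an uncovered intermediate range. The clean fix is to not use Young's inequality there at all: estimate $|(B(\bq,\bp),\bq)_{L^2}|\le cM_1\|\bq\|_{H^1}|\bq|_{L^2}\le cM_1^2|\bq|_{L^2}$ using $\|\bq\|_{H^1}\le M_1$, which merges harmlessly into the $cM_1^2L_N|\bq|_{L^2}$ forcing term and works for every $N$. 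Second, your final enstrophy inequality carries a spurious factor $\lambda_{N+1}^{-1}$ on the right-hand side; integrating it as written would yield $\|\bq\|_{H^1}=O(L_N\lambda_{N+1}^{-1})$, which is strictly stronger than \eqref{boundqH1} and is not what the estimates produce --- after Young's inequality against $\nu|A\bq|_{L^2}^2$, the forcing terms coming from $(Q_N\bg,A\bq)_{L^2}$ and $(B(\bp,\bp),A\bq)_{L^2}$ are of size $c\nu^{-1}(|Q_N\bg|_{L^2}^2+M_1^4L_N^2)$ with no inverse power of $\lambda_{N+1}$. With that factor removed, Gr\"onwall gives exactly $\|\bq\|_{H^1}\le C_1L_N\lambda_{N+1}^{-1/2}$, which is the conclusion you state; so this is an internal inconsistency in the displayed inequality rather than in the method. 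Finally, when discarding the transient one should note that $\Exp^{-\nu\lambda_{N+1}(t-T)}M_0\le C_0L_N/\lambda_{N+1}$ for all $t-T$ exceeding a time independent of $N$ (e.g.\ via $\Exp^{-x}\le x^{-1}$), so that the new absorbing time is uniform in $N$ as the theorem requires.
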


The next theorem was proved in \cite[Theorem 2.1]{FoiasManleyTemam1988}, and it provides uniform in time estimates, in the $H$ and $V$ norms, for the distance between a solution $\bu$ of \eqref{NSEeqs} and its vertical projection on the graph of the mapping $\Phi_1$, given in \eqref{defPhi1}.

\begin{thm}\label{thmdistuPhi1}
 Let $\bu_0 \in H$ and let $\bu$ be a solution of \eqref{NSEeqs} satisfying $\bu(t_0) = \bu_0$. Then, there exists $T = T(\nu, \lambda_1,|\bg|_{L^2},|\bu_0|_{L^2}) \geq t_0$ such that
 \be\label{distuPhi1L2aa}
 |\Phi_1(P_N \bu(t)) - Q_N \bu(t)|_{L^2} \leq C \frac{L_N}{\lambda_{N+1}^{3/2}}, \quad \forall t \geq T, \quad \forall N \in \mathbb{N},
\ee
and
\be\label{distuPhi1H1aa}
 \|\Phi_1(P_N\bu(t)) - Q_N \bu(t)\|_{H^1} \leq C \frac{L_N}{\lambda_{N+1}}, \quad \forall t \geq T, \quad \forall N \in \mathbb{N},
\ee
where $C$ is a constant depending on $\nu$, $\lambda_1$ and $|\bg|_{L^2}$, but independent of $N$.
\end{thm}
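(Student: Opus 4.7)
The plan is to derive an algebraic residual identity for $\bxi(t) = \Phi_1(P_N\bu(t)) - Q_N\bu(t) \in Q_N H$ and then extract its $\lambda_{N+1}$-decay from the logarithmic bilinear estimates together with the uniform bounds of Theorem~\ref{unifboundsQm}. Writing $\bp = P_N\bu$ and $\bq = Q_N\bu$, I apply $Q_N$ to \eqref{functionaleq} to obtain $\rd\bq/\rd t + \nu A\bq + Q_N B(\bu,\bu) = Q_N\bg$, and subtract the defining relation $\nu A\Phi_1(\bp) = Q_N\bg - Q_N B(\bp,\bp)$. Using the decomposition $B(\bu,\bu) - B(\bp,\bp) = B(\bp,\bq) + B(\bq,\bp) + B(\bq,\bq)$, this yields
\[
\nu A\bxi \;=\; Q_N\bigl[ B(\bp,\bq) + B(\bq,\bp) + B(\bq,\bq) \bigr] \;+\; \frac{\rd\bq}{\rd t}.
\]

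For the $H^1$ estimate \eqref{distuPhi1H1aa}, I take the $L^2$ inner product of the residual identity with $\bxi$; the left-hand side becomes $\nu\|\bxi\|_{H^1}^2$. Each bilinear contribution is bounded via \eqref{ineqConseqBrezisGallouet} or \eqref{ineqTiti1} using the uniform $V$-bound \eqref{unifboundsattractor} on $\bu$ and the decay \eqref{boundqH1} on $\bq$; the Poincar\'e-type gain $|\bxi|_{L^2} \leq \lambda_{N+1}^{-1/2}\|\bxi\|_{H^1}$ (valid because $\bxi \in Q_N H$) supplies the needed factor $\lambda_{N+1}^{-1}$. The time-derivative term $(\rd\bq/\rd t, \bxi)_{L^2}$ is handled by substituting $\rd\bq/\rd t = Q_N\bg - \nu A\bq - Q_N B(\bu,\bu)$ and exploiting the antisymmetry \eqref{PropBilTerm} of $B$. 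A Young absorption of residual $\|\bxi\|_{H^1}$ factors into $\nu\|\bxi\|_{H^1}^2/2$ on the left then yields \eqref{distuPhi1H1aa}.

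For the sharper $L^2$ bound \eqref{distuPhi1L2aa}, I argue by duality: for each $\boldsymbol{\eta} \in Q_N H$ with $|\boldsymbol{\eta}|_{L^2} = 1$, I write $\nu(\bxi, \boldsymbol{\eta})_{L^2} = \nu(A\bxi, A^{-1}\boldsymbol{\eta})_{L^2}$ and apply the residual identity, so each term is paired against $A^{-1}\boldsymbol{\eta}$. The gains $|A^{-1}\boldsymbol{\eta}|_{L^2} \leq \lambda_{N+1}^{-1}$ and $\|A^{-1}\boldsymbol{\eta}\|_{H^1} \leq \lambda_{N+1}^{-1/2}$ furnish exactly one additional $\lambda_{N+1}^{-1/2}$ relative to the $H^1$ case, promoting the decay to $\lambda_{N+1}^{-3/2}$; the estimate \eqref{ineqTiti2} fits the resulting pairings naturally after applying \eqref{PropBilTerm}. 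The main obstacle is the time-derivative term: a naive bound $|\rd\bq/\rd t|_{L^2} = O(1)$ (which holds once $\bu$ has entered a uniform $D(A)$ absorbing ball) only yields $|A^{-1}\rd\bq/\rd t|_{L^2} = O(\lambda_{N+1}^{-1})$, short of the target by one factor $\lambda_{N+1}^{-1/2}$. The remedy is to replace $\rd\bq/\rd t$ via the NSE so that $(\rd\bq/\rd t, A^{-1}\boldsymbol{\eta})_{L^2}$ reduces to terms of the form $(B(\bu,\bu), A^{-1}\boldsymbol{\eta})_{L^2} = -(B(\bu, A^{-1}\boldsymbol{\eta}), \bu)_{L^2}$ by \eqref{PropBilTerm}, which is then controlled by $L^4$-type estimates on $\bu$ combined with $\|A^{-1}\boldsymbol{\eta}\|_{H^1} \leq \lambda_{N+1}^{-1/2}$, recovering the missing half-power and completing the proof.
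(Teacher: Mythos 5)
First, note that the paper does not prove this theorem: it is quoted verbatim from \cite[Theorem 2.1]{FoiasManleyTemam1988} (see also \cite{Titi1990}), so the relevant comparison is with that classical proof. Your starting point coincides with it: the residual identity $\nu A\bxi = Q_N[B(\bp,\bq)+B(\bq,\bp)+B(\bq,\bq)] + \rd\bq/\rd t$ is correct, and inverting $\nu A$ on $Q_NH$ (equivalently, your duality pairing against $A^{-1}\boldsymbol{\eta}$ with $|A^{-1}\boldsymbol{\eta}|_{L^2}\leq\lambda_{N+1}^{-1}$, $\|A^{-1}\boldsymbol{\eta}\|_{H^1}\leq\lambda_{N+1}^{-1/2}$) is exactly the right mechanism for the gain of $\lambda_{N+1}^{-1/2}$ between the $H^1$ and $L^2$ estimates.

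The genuine gap is your treatment of $\rd\bq/\rd t$. Substituting $\rd\bq/\rd t = Q_N\bg - \nu A\bq - Q_NB(\bu,\bu)$ back into the residual identity is circular: it simply returns you to $\nu A\bxi = \nu A\Phi_1(\bp) - \nu A\bq$, and, worse, it splits a small quantity into three individually large pieces. Concretely, after substitution the forcing contribution to the $L^2$ estimate is $(Q_N\bg, A^{-1}\boldsymbol{\eta})\leq |Q_N\bg|\,\lambda_{N+1}^{-1}$, which carries no rate beyond $\lambda_{N+1}^{-1}$ (and only $\lambda_{N+1}^{-1/2}$ in the $H^1$ estimate after pairing with $\bxi$); likewise your manipulation $(B(\bu,\bu),A^{-1}\boldsymbol{\eta}) = -(B(\bu,A^{-1}\boldsymbol{\eta}),\bu)\leq \|\bu\|_{L^4}^2\|A^{-1}\boldsymbol{\eta}\|_{H^1}$ yields only $O(\lambda_{N+1}^{-1/2})$ — it \emph{loses} half a power rather than recovering one, since $\|A^{-1}\boldsymbol{\eta}\|_{H^1}\leq\lambda_{N+1}^{-1/2}$ replaces the stronger factor $|A^{-1}\boldsymbol{\eta}|_{L^2}\leq\lambda_{N+1}^{-1}$. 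No term-by-term bound of the substituted expression can reach $\lambda_{N+1}^{-3/2}$. The missing ingredient — and the technical heart of the cited proof — is a uniform-in-time bound on $\rd\bu/\rd t$ in $V$ (obtained by differentiating \eqref{functionaleq} in time and running energy estimates on the equation for $\rd\bu/\rd t$, once $\bu$ is in the $D(A)$-absorbing set), which gives $|\rd\bq/\rd t|_{L^2}\leq\lambda_{N+1}^{-1/2}\|\rd\bu/\rd t\|_{H^1}$ and $|A^{-1/2}\rd\bq/\rd t|_{L^2}\leq\lambda_{N+1}^{-1}\|\rd\bu/\rd t\|_{H^1}$; these are exactly the decays needed for \eqref{distuPhi1H1aa} and \eqref{distuPhi1L2aa} respectively. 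A secondary issue: applying \eqref{ineqTiti1} with third argument $\bxi\in Q_NH$ produces the factor $[1+\log(\|\bxi\|_{H^1}/(\lambda_1^{1/2}|\bxi|_{L^2}))]^{1/2}$, which is \emph{not} controlled by $L_N$ on the infinite-dimensional space $Q_NH$, and the alternative \eqref{ineqConseqBrezisGallouet} route contributes an extra factor of $L_N$ on top of the one already present in \eqref{boundqH1}, so the choices of inequality per term need to be made carefully (and with Agmon-type $L^\infty$ bounds on $\bu$ from the $D(A)$ absorbing set) to land on the stated power $L_N$ rather than $L_N^2$.
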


\begin{rmk}\label{rmkunifboundsu}
In the results of section \ref{secMainResults}, we will assume that the reference solution of \eqref{NSEeqs} has evolved long enough so that the uniform bounds from Proposition \ref{propunifbounds}, Theorem \ref{unifboundsQm} and Theorem \ref{thmdistuPhi1} are always valid, i.e., for simplicity, we assume that $T = t_0$. Notice that, in particular, the uniform bounds from Proposition \ref{propunifbounds}, Theorem \ref{unifboundsQm} and Theorem \ref{thmdistuPhi1} are valid for any trajectory $\bu = \bu(t)$ lying in the global attractor of \eqref{NSEeqs}, for every $t \in \mathbb{R}$.
\end{rmk}

\section{Main Results}\label{secMainResults}

The purpose of this section is to establish analytical estimates of the error that occurs when using the Postprocessing Galerkin method applied to the data assimilation algorithm \eqref{DataAssAlg} in order to obtain an approximation of the reference solution $\bu$, which satisfies the 2D Navier-Stokes equations \eqref{NSEeqs}. This means we want to establish an estimate of the difference $[(\bv_N + \Phi_1(\bv_N)) - \bu]$ in some appropriate norm, where $\bv_N$ denotes the Galerkin approximation of $\bv$, the solution of \eqref{DataAssAlg}, in $P_N H$. This is done here for the norms in the spaces $H$ and $V$.

We start by giving some of the main ideas behind our results. From now on, we reserve the letter $N \in \mathbb{N}$ for the number of modes in the Galerkin approximation of \eqref{DataAssAlg}, and we adopt the following notation for the low and high modes of the reference solution $\bu$: $\bp = P_N \bu$ and $\bq = Q_N \bu$, respectively. Moreover, we assume that $\bu$ satisfies the bounds from \eqref{unifboundsattractor}-\eqref{distuPhi1H1aa}, for every $t \geq t_0$.

First, we rewrite the error in implementing the Postprocessing Galerkin method as
\be\label{rewritingerror}
(\bv_N + \Phi_1(\bv_N)) - \bu = (\bv_N - \bp) + (\Phi_1(\bp) - \bq) + (\Phi_1(\bv_N) - \Phi_1(\bp)).
\ee

From Theorem \ref{thmdistuPhi1}, we have that, for every $t \geq t_0$,
\be\label{distuPhi1L2a}
 |\Phi_1(\bp(t)) - \bq(t)|_{L^2} \leq C \frac{L_N}{\lambda_{N+1}^{3/2}}
\ee
and
\be\label{distuPhi1H1a}
 \|\Phi_1(\bp(t)) - \bq(t)\|_{H^1} \leq C \frac{L_N}{\lambda_{N+1}},
\ee
where $C = C(\nu, \lambda_1, |\bg|_{L^2})$.

Moreover, it is not difficult to see that the restriction of $\Phi_1$ to the set $P_N B_V(R)$, for any $R > 0$, is a Lipschitz continuous mapping with respect to the norms in both $H$ and $V$ (see, e.g., \cite[Appendix]{DevulderMarionTiti1993}). More specifically, we have
\be\label{Phi1LipschitzL2}
 |\Phi_1(\bp_1) - \Phi_1(\bp_2)|_{L^2} \leq l |\bp_1 - \bp_2|_{L^2}, \quad \forall \bp_1, \bp_2 \in P_N B_V(R),
\ee
and
\be\label{Phi1LipschitzH1}
 \|\Phi_1(\bp_1) - \Phi_1(\bp_2)\|_{H^1} \leq l \|\bp_1 - \bp_2\|_{H^1}, \quad \forall \bp_1, \bp_2 \in P_N B_V(R),
\ee
where $l = C\lambda_{N+1}^{-1/4}$, with $C$ being a constant depending on $\nu$, $\lambda_1$ and $R$.

It follows from Propositions \ref{thmUnifBoundvNH1Fourier} and \ref{thmUnifBoundvNH1} below that, given a solution $\bu$ of \eqref{NSEeqs} satisfying \eqref{unifboundsattractor}-\eqref{boundqH1}, for every $t \geq t_0$, and given $\bv_0 \in B_V(M_1)$, under suitable conditions on the parameters $\beta$ and $h$, the solution $\bv_N$ of \eqref{DataAssAlg}, with $\bv_N(t_0) = P_N \bv_0$, satisfies $\bv_N(t) \in B_V(3M_1)$, for all $t \geq t_0$. Thus, using \eqref{distuPhi1L2a}, \eqref{distuPhi1H1a} and \eqref{Phi1LipschitzL2}-\eqref{Phi1LipschitzH1} with $R = 3M_1$, we obtain from \eqref{rewritingerror} that, for every $t \geq t_0$,
\be\label{distuPhi1L2b}
|(\bv_N(t) + \Phi_1(\bv_N(t))) - \bu(t)|_{L^2} \leq (1+l)|\bv_N(t) - \bp(t)|_{L^2} + C \frac{L_N}{\lambda_{N+1}^{3/2}},
\ee
and
\be\label{distuPhi1H1b}
\|(\bv_N(t) + \Phi_1(\bv_N(t))) - \bu(t)\|_{H^1} \leq (1+l)\|\bv_N(t) - \bp(t) \|_{H^1} + C \frac{L_N}{\lambda_{N+1}}.
\ee

Moreover, we also have
\be\label{inversePoincarevNPNu}
\|\bv_N(t) - \bp(t)\|_{H^1} \leq \lambda_N^{1/2} |\bv_N(t) - \bp(t)|_{L^2}.
\ee

Thus, using also \eqref{inversePoincarevNPNu}, we see from \eqref{distuPhi1L2b} and \eqref{distuPhi1H1b} that it suffices to obtain an estimate for $|\bv_N(t) - \bp(t)|_{L^2}$ in order to achieve our goal.

Applying $P_N$ to \eqref{functionaleq}, we see that $\bp = P_N \bu$ satisfies the equation
\be\label{NSEfuncPNintrosecresults}
 \frac{\rd \bp}{\rd t} + \nu A \bp + P_N B(\bp, \bp) - P_N \bg = - P_N G,
\ee
where
\begin{multline}
 G(t) = B(\bu(t),\bu(t)) - B(\bp(t),\bp(t)) \\
      = B(\bp(t),\bq(t)) + B(\bq(t),\bp(t)) + B(\bq(t),\bq(t));
\end{multline}
while we recall from \eqref{DataAssAlgFuncGalerkin} that $\bv_N$ satisfies the equation
\be\label{DataAssAlgFuncGalerkinintrosecresults}
 \frac{\rd \bv_N}{\rd t} + \nu A \bv_N + P_N B(\bv_N,\bv_N) - P_N \bg = - \beta P_N P_{\sigma}I_h(\bv_N - \bu).
\ee
Now, denoting $\bw = \bv_N - \bp$ and taking the difference between \eqref{NSEfuncPNintrosecresults} and \eqref{DataAssAlgFuncGalerkinintrosecresults}, we obtain that
\begin{multline}\label{eqwintrosecresults}
 \frac{\rd \bw}{\rd t} + \nu A \bw + \beta \bw + P_N [B(\bv_N,\bv_N) - B(\bp,\bp)] = P_N G - \beta P_N P_\sigma [I_h(\bw) - \bw] \\
 + \beta P_N P_\sigma I_h (\bq).
\end{multline}
The terms $\nu A \bw$ and $\beta\bw$ represent the dissipative terms in \eqref{eqwintrosecresults}, which act on stabilizing $\bw$. The term $A\bw$ has a stronger effect than $\beta \bw$ on the high modes of $\bw$, for small values of $\nu$; while $\beta\bw$ has a stronger effect than $\nu A \bw$ on the low modes of $\bw$.

Applying the Duhamel's (variation of constants) formula to \eqref{eqwintrosecresults}, yields, for every $s \geq t$,
\begin{multline}\label{eqwDuhamelintrosecresults}
 \bw(s) = \Exp^{-(s-t) (\nu A P_N + \beta P_N)} \bw(t) \\
  - \int_{t}^s \Exp^{-(s-\tau) (\nu A P_N + \beta P_N)} P_N [ B(\bv_N(\tau),\bv_N(\tau)) - B(\bp(\tau),\bp(\tau))] \rd \tau \\
  + \int_{t}^s \Exp^{-(s-\tau) (\nu A P_N + \beta P_N)} P_N G(\tau) \rd \tau \\
  - \beta \int_{t}^s \Exp^{-(s-\tau) (\nu A P_N + \beta P_N)} P_N [P_\sigma I_h(\bw(\tau)) - \bw(\tau)] \rd \tau \\
  + \beta \int_{t}^s \Exp^{-(s-\tau) (\nu A P_N + \beta P_N)}  P_N P_\sigma I_h(\bq(\tau)) \rd \tau .
\end{multline}

The estimates for the terms on the right-hand side of \eqref{eqwDuhamelintrosecresults} are obtained by taking advantage of the smoothing effect of the operator $\Exp^{-(s-t) (\nu A P_N + \beta P_N)}$, with the finite-dimensionality of the operator $P_N$ also playing a crucial role. Moreover, the estimates for the last two terms on the right-hand side of \eqref{eqwDuhamelintrosecresults} are obtained by using suitable properties of the interpolant operator $I_h$.

We consider two types of interpolant operators $I_h$, treated in two different sections. In the first one, section \ref{subsecFouriermodescase}, we consider $I_h$ as a low Fourier modes projector, i.e., $I_h = P_K$, $K \in \mathbb{N}$. In this case, we notice that we can commute $P_N$ with $I_h = P_K$ and thus the last term on the right-hand side of \eqref{eqwintrosecresults} is zero, which simplifies the analysis.

In section \ref{subsecAnotherClassIntOp}, we consider a more general class of interpolation operators, satisfying suitable properties (see properties \eqref{propintP1}-\eqref{propintP3} in section \ref{subsecAnotherClassIntOp}, below), which are, in particular, satisfied by the example of a low Fourier modes projector considered in section \ref{subsecFouriermodescase}. Another particular example of such class of interpolant operators is given by local averages over finite volume elements, which is illustrated in the Appendix in the case of periodic boundary conditions. In this latter example, this approach can be viewed as a hybrid method, in the sense that observations are acquired through a finite elements method, while the approximate model is numerically solved through a spectral method, the Postprocessing Galerkin.

The proof of the estimate for $|\bv_N - \bp|_{L^2}$, in both cases, follows similar ideas to the proof given in \cite[Theorem 2]{GarciaNovoTiti1999}, where, for a given initial condition $\bu(t_0) = \bu_0$, an estimate was obtained for $|\bu_N - \bp|_{L^2}$, with $\bu_N$ being the Galerkin approximation of $\bu$ satisfying $P_N \bu_N(t_0) = P_N \bu_0$. We remark, however, that an advantage of our result is that the estimate for $|\bv_N - \bp|_{L^2}$ is uniform in time (see Theorems \ref{thmestvNPNu} and \ref{thmestvNPNuIh}), while the estimate for $|\bu_N - \bp|_{L^2}$ given in \cite[Theorem 2]{GarciaNovoTiti1999} grows exponentially in time. This important difference is justified by the presence of the additional dissipative term $\beta \bw$ in \eqref{eqwintrosecresults}, which helps to stabilize the large scales of $\bw$ when the parameter $\beta$ is suitably chosen. More specifically, $\beta$ needs to be chosen large enough in order to stabilize the large
spatial scales of $\bw$, but not too large so as not to destabilize the small spatial scales of $\bw$ as well,
which are dissipated by $\nu A \bw$, for small values of $\nu$. For this reason, we need, roughly, $\beta \leq c\nu/h^2$.

Using the previous ideas, we prove in Theorems \ref{thmestvNPNu} and \ref{thmestvNPNuIh} below that, for sufficiently large $t$,
\be\label{errorlowmodes}
|\bv_N(t) - \bp(t)|_{L^2} \leq O(L_N^4 \lambda_{N+1}^{-3/2}),
\ee
in the case of an interpolant operator given by a low Fourier modes projector; and
\be\label{errorlowmodesIh}
|\bv_N(t) - \bp(t)|_{L^2} \leq O(L_N \lambda_{N+1}^{-5/4}),
\ee
in the general interpolant operator case.

Thus, from \eqref{distuPhi1L2b} and \eqref{distuPhi1H1b}, it follows that, for $t$ large enough,
\be\label{ordererrorPPGML2}
|(\bv_N(t) + \Phi_1(\bv_N(t))) - \bu(t)|_{L^2} \leq O(L_N^4 \lambda_{N+1}^{-3/2})
\ee
and
\be\label{ordererrorPPGMH1}
\|(\bv_N(t) + \Phi_1(\bv_N(t))) - \bu(t)\|_{H^1} \leq O(L_N^4 \lambda_{N+1}^{-1}),
\ee
in the case of an interpolant operator given by a low Fourier modes projector (cf. Theorem \ref{thmerrorPPGM}); and
\be\label{ordererrorPPGML2Ih}
|(\bv_N(t) + \Phi_1(\bv_N(t))) - \bu(t)|_{L^2} \leq O(L_N \lambda_{N+1}^{-5/4})
\ee
and
\be\label{ordererrorPPGMH1Ih}
\|(\bv_N(t) + \Phi_1(\bv_N(t))) - \bu(t)\|_{H^1} \leq O(L_N \lambda_{N+1}^{-3/4}),
\ee
in the general interpolant operator case (cf. Theorem \ref{thmerrorPPGMIh}, below).

On the other hand, from  \eqref{boundqL2}, \eqref{boundqH1}, \eqref{inversePoincarevNPNu}, \eqref{errorlowmodes} and \eqref{errorlowmodesIh}, we obtain that the error between the Galerkin approximation $\bv_N$ of $\bv$ and the reference solution $\bu$ satisfies, for $t$ large enough,
\be\label{ordererrorSGML2}
|\bv_N(t) - \bu(t)|_{L^2} \leq |\bv_N(t) - \bp(t)|_{L^2} + |\bq(t)|_{L^2} \leq O(L_N \lambda_{N+1}^{-1}),
\ee
\be\label{ordererrorSGMH1}
\|\bv_N(t) - \bu(t)\|_{H^1} \leq \|\bv_N(t) - \bp(t)\|_{H^1} + \|\bq(t)\|_{H^1} \leq O(L_N \lambda_{N+1}^{-1/2}),
\ee
in both cases of interpolant operators (cf. Corollaries \ref{corerrorSGM} and \ref{corerrorSGMIh}, below).

Comparing \eqref{ordererrorPPGML2} and \eqref{ordererrorPPGML2Ih} with \eqref{ordererrorSGML2}, and \eqref{ordererrorPPGMH1} and \eqref{ordererrorPPGMH1Ih} with \eqref{ordererrorSGMH1}, we see that, as mentioned in the Introduction, the Postprocessing Galerkin method indeed yields a better convergence rate than the standard Galerkin method. Notably, this improved rate is achieved due to essentially three facts: firstly, by exploring the fact that the error in the low modes, $|\bv_N - \bp|_{L^2}$, is much smaller than the error committed in the high modes, $|\bq|_{L^2}$ (cf. \eqref{errorlowmodes}, \eqref{errorlowmodesIh} and \eqref{boundqL2}), when using the standard Galerkin method; secondly, by complementing the finite-dimensional approximation $\bv_N \in P_N H$ with a suitable approximation of the high modes, given by $\Phi_1(\bv_N) \in Q_N H$, which yields a better approximation to $\bq$ than $0$ (cf. \eqref{distuPhi1L2a}-\eqref{distuPhi1H1a} and \eqref{boundqL2}-\eqref{boundqH1}); and finally, by using
the Lipschitz property of $\Phi_1$ (cf. \eqref{Phi1LipschitzL2}, \eqref{Phi1LipschitzH1}).

\begin{rmk}
We notice that the convergence rates with respect to $N$ in \eqref{ordererrorPPGML2Ih}-\eqref{ordererrorPPGMH1Ih}, obtained for the error committed when implementing the Postprocessing Galerkin method to \eqref{DataAssAlg} in the general interpolant operator case, is not as good as the rate in \eqref{ordererrorPPGML2}-\eqref{ordererrorPPGMH1}, for the case of an interpolant operator given by a low Fourier modes projector. In general terms, as pointed out before, this is due to the fact that the former case concerns a hybrid method, where the observations are acquired through, e.g., a finite elements method, while the approximate model \eqref{DataAssAlg} is discretized in space through a spectral method, the Postprocessing Galerkin. On the mathematical side, this is represented by the possible lack of commutativity between the operators $P_\sigma I_h$ and $A$, an issue that does not occur in the case of an interpolant operator given by a low Fourier modes projector, and which introduces additional error to
the estimates.
%Nevertheless, we emphasize that, although not as good as the result from Theorem \ref{thmerrorPPGM}, this convergence rate is still better than the one provided by the standard Galerkin method.
\end{rmk}

%For our results, we assume that the reference solution $\bu$ is a trajectory lying in the global attractor of the 2D Navier-Stokes equations. However, the same conclusions still remain true in case $\bu$ is any solution of the 2D Navier-Stokes equations if we assume the initial time $t_0$  is large enough.

\subsection{The case of an interpolant operator given by a low Fourier modes projector}\label{subsecFouriermodescase}

We consider an interpolant operator given by the orthogonal projection on low modes of the Fourier domain, i.e. $I_h = P_K$, for some $K\in \mathbb{N}$. The data assimilation algorithm \eqref{DataAssAlgFunc} is given in this particular case by

\be\label{eqDataAssFourier}
\frac{\rd \bv}{\rd t} + \nu A \bv + B(\bv,\bv) = \bg - \beta P_K(\bv - \bu).
\ee

For every $N \in \mathbb{N}$ with $N \geq K$, we consider the Galerkin approximation system of \eqref{eqDataAssFourier} in the space $P_N H$, given by
\begin{eqnarray}\label{eqDataAssFourierGalerkin}
\frac{\rd \bv_N}{\rd t} + \nu A \bv_N + P_N B(\bv_N,\bv_N) = P_N \bg - \beta P_K(\bv_N - \bu) \nonumber \\
= P_N \bg - \beta P_K(\bv_N - \bp),
\end{eqnarray}
with an initial condition given by
\be
\bv_N(t_0) = P_N \bv_0,
\ee
where $\bv_0$ is chosen in a suitable space, but arbitrarily. We assume either periodic or no-slip Dirichlet boundary conditions.

The condition $N \geq K$ is assumed here for simplicity purposes. Nevertheless, it is a natural assumption, since one would expect to have the resolution of the numerical method to be greater or equal than the resolution associated to the observations.

The following result provides a first uniform in time bound of the finite-dimensional difference $\bv_N - \bp$ in the $H^1$ norm, under suitable conditions on $\beta$ and $K$. Since we assume that the reference solution $\bu$ satisfies the bounds from \eqref{unifboundsattractor}-\eqref{boundqH1}, for every $t\geq t_0$, we also have in particular that $\bp$ is uniformly bounded in $V$. Thus, as a consequence of the following proposition, we obtain that $\bv_N$ is also uniformly bounded in $V$, provided $\beta$ and $K$ satisfy the appropriate conditions.

In the statement below, we consider an auxiliary parameter $m \in \mathbb{N}$ that is used for one of the lower bounds needed for $\beta$. More specifically, we choose $\beta$ such that, in particular, $\beta \geq \nu \lambda_m$. This auxiliary parameter plays a more important role in the proof of Theorem \ref{thmestvNPNu}, below, but we also use it here in order to be consistent.

\begin{prop}\label{thmUnifBoundvNH1Fourier}
 Let $\bu$ be a solution of \eqref{NSEeqs} satisfying \eqref{unifboundsattractor}-\eqref{boundqH1}, for every $t \geq t_0$. Let $\bv_0 \in B_V(M_1)$, with $M_1$ as in \eqref{unifboundsattractor}. For every $N \in \mathbb{N}$, let $\bv_N$ be the unique solution of \eqref{eqDataAssFourierGalerkin} satisfying $\bv_N(t_0) = P_N \bv_0$.
 Consider $m \in \mathbb{N}$ large enough such that
 \be\label{condmFourier}
  \lambda_m \geq \max \left\{ \frac{\lambda_1 \Exp}{2}, c \frac{C_1}{\nu}L_m^2, c \left( \frac{C_1^2}{\nu M_1}\right)^{2/3} L_m^2\right\}.
 \ee
 If $\beta > 0$ and $K \in \mathbb{N}$ are large enough such that
 \be\label{condbetaFourier}
  \beta \geq \max\left\{ \nu \lambda_m, c \frac{M_1^2}{\nu}\left[ 1 + \log \left(\frac{M_1}{\nu\lambda_1^{1/2}}\right)\right] \right\}
 \ee
 and
 \be\label{condKFourier}
  \lambda_{K+1} \geq \frac{2\beta}{\nu},
 \ee
 then, for every $N \geq K$,
 \be\label{unifboundvNFourier}
  \sup_{t \geq t_0} \|\bv_N(t) - \bp(t)\|_{H^1} \leq 2 M_1.
 \ee
\end{prop}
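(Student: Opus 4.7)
My plan is a bootstrap/continuity argument combined with an $H^1$-energy estimate for $\bw := \bv_N - \bp$. Since $\bv_N(t_0) = P_N\bv_0$, $\bp(t_0) = P_N\bu(t_0)$, and $P_N$ is a contraction on $V$ (the eigenfunctions of $A$ are orthogonal in $H^1$), the triangle inequality gives $\|\bw(t_0)\|_{H^1} \leq \|\bv_0\|_{H^1} + \|\bu(t_0)\|_{H^1} \leq 2M_1$. I then set
\[
T^* := \sup\bigl\{ T \geq t_0 : \|\bw(t)\|_{H^1} \leq 2M_1 \text{ for all } t \in [t_0, T]\bigr\},
\]
and argue by contradiction that $T^* = \infty$. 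The payoff of this setup is that throughout $[t_0, T^*]$ one has the a priori bound $\|\bv_N(t)\|_{H^1} \leq 3M_1$, so $\bv_N$ can be treated as uniformly bounded in $V$ inside every nonlinear estimate.

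Subtracting the $P_N$-Galerkin projection of \eqref{functionaleq} from \eqref{eqDataAssFourierGalerkin} and exploiting $K \leq N$ (so that $P_N P_\sigma I_h = P_K$) together with $P_K\bq = 0$, the equation for $\bw$ reads
\[
\frac{\rd \bw}{\rd t} + \nu A\bw + \beta P_K\bw + P_N[B(\bv_N,\bv_N) - B(\bp,\bp)] = P_N G,
\]
with $G = B(\bp,\bq) + B(\bq,\bp) + B(\bq,\bq)$. Taking the $L^2$-inner product with $A\bw$ and splitting $\bw = P_K\bw + Q_K\bw$, the assumption \eqref{condKFourier} (i.e.\ $\nu\lambda_{K+1} \geq 2\beta$) yields the dissipation bound
\[
\nu|A\bw|_{L^2}^2 + \beta\|P_K\bw\|_{H^1}^2 \geq \nu|AP_K\bw|_{L^2}^2 + 2\beta\|Q_K\bw\|_{H^1}^2 + \beta\|P_K\bw\|_{H^1}^2 \geq \nu|AP_K\bw|_{L^2}^2 + \beta\|\bw\|_{H^1}^2,
\]
so the feedback controls the full $H^1$-norm uniformly.

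To bound the nonlinear contribution I would decompose $B(\bv_N,\bv_N) - B(\bp,\bp) = B(\bw,\bv_N) + B(\bp,\bw)$ and apply the logarithmic inequality \eqref{ineqConseqBrezisGallouet} to each piece, using $\|\bv_N\|_{H^1}\leq 3M_1$, $\|\bp\|_{H^1}\leq M_1$, and the fact that on $P_N H$ the ratio $|A\bw|_{L^2}/\|\bw\|_{H^1}$ is bounded by $\lambda_N^{1/2}$ (and similarly for $\bp$). A Young inequality against $\tfrac{\nu}{4}|AP_K\bw|_{L^2}^2$ then leaves a $\|\bw\|_{H^1}^2$-coefficient of the form $cM_1^2\nu^{-1}[1+\log(M_1/(\nu\lambda_1^{1/2}))]$, which is dominated by $\tfrac{\beta}{2}$ thanks to \eqref{condbetaFourier}. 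The three $\bq$-source terms in $P_NG$ are handled with \eqref{ineqTiti1}/\eqref{ineqTiti2} together with the sharp decay \eqref{boundqL2}--\eqref{boundqH1} of $\bq$ and the $\lambda_m$-calibration in \eqref{condmFourier}, producing, after one more Young's inequality, an $N$-independent constant $C_* = C_*(\nu,\lambda_1,|\bg|_{L^2},M_1)$.

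Putting everything together yields a differential inequality
\[
\frac{\rd}{\rd t}\|\bw\|_{H^1}^2 + \beta\|\bw\|_{H^1}^2 \leq C_* \quad \text{on } [t_0,T^*].
\]
If $T^* < \infty$, continuity of $\bw$ in $V$ forces $\|\bw(T^*)\|_{H^1}^2 = 4M_1^2$ and $\tfrac{\rd}{\rd t}\|\bw\|_{H^1}^2(T^*) \geq 0$, whence $0 \leq -4\beta M_1^2 + C_*$. The conditions \eqref{condmFourier}--\eqref{condbetaFourier} are precisely calibrated so that $C_* < 4\beta M_1^2$, giving the required contradiction and thus $T^* = \infty$ and \eqref{unifboundvNFourier}. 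I expect the main obstacle to be the careful bookkeeping of the Brezis--Gallouet logarithms: each $[1+\log(|A\cdot|_{L^2}/(\lambda_1^{1/2}\|\cdot\|_{H^1}))]^{1/2}$ has to be absorbed without double-spending the $\nu|AP_K\bw|_{L^2}^2$ and $\beta\|\bw\|_{H^1}^2$ dissipations, and the auxiliary scale $\lambda_m$ in \eqref{condmFourier} is what packages the various logarithmic contributions into a clean, $N$-independent lower bound on $\beta$ that closes the Grönwall argument.
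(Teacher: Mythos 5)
Your overall strategy (bootstrap in the $V$-norm, $H^1$-energy estimate for $\bw=\bv_N-\bp$, logarithmic inequalities for the nonlinear terms, absorption via the $\beta$- and $\nu$-dissipation) is the same as the paper's, and your first-exit-time variant of the continuation argument is fine. However, there is a genuine gap at the critical step. You claim that ``a Young inequality against $\tfrac{\nu}{4}|AP_K\bw|_{L^2}^2$ leaves a $\|\bw\|_{H^1}^2$-coefficient of the form $cM_1^2\nu^{-1}[1+\log(M_1/(\nu\lambda_1^{1/2}))]$.'' This does not follow. The cross terms produced by \eqref{ineqConseqBrezisGallouet} and \eqref{ineqTiti2} have the form $cM_1\|\bw\|_{H^1}|A\bw|_{L^2}\bigl[1+\log\bigl(|A\bw|_{L^2}/(\lambda_1^{1/2}\|\bw\|_{H^1})\bigr)\bigr]^{1/2}$, where the logarithm depends on the \emph{solution}, not on the data. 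If you bound that logarithm by $L_N^2$ (as your remark about the ratio being at most $\lambda_N^{1/2}$ suggests), Young's inequality forces $\beta\gtrsim M_1^2L_N^2/\nu$, which is $N$-dependent and incompatible with the hypothesis \eqref{condbetaFourier}; if you do not bound it, a plain Young inequality leaves the state-dependent logarithm unresolved. The paper closes this precisely by the pointwise minimization argument \eqref{absorbingtermsFourier}--\eqref{phipositiveFourier}: setting $r=|A\bw|_{L^2}/(\lambda_1^{1/2}\|\bw\|_{H^1})\ge 1$, one shows $\phi(r)=r^2-\rho r(1+\log r^2)^{1/2}+B\ge 0$ whenever $B\ge\rho^2\log(\rho^2)$, with $\rho=cM_1/(\nu\lambda_1^{1/2})$ and $B=2\beta/(\nu\lambda_1)$; this is exactly where the specific form of \eqref{condbetaFourier} comes from. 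Without this (or an equivalent) argument your differential inequality does not close with an $N$-independent $\beta$, and this is the essential missing idea rather than mere bookkeeping.

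A secondary, fixable issue: your dissipation split retains only $\nu|AP_K\bw|_{L^2}^2+\beta\|\bw\|_{H^1}^2$, having spent all of $\nu|AQ_K\bw|_{L^2}^2$ to generate $2\beta\|Q_K\bw\|_{H^1}^2$. But every nonlinear estimate produces the full $|A\bw|_{L^2}$, whose $Q_K$-component cannot be absorbed by $|AP_K\bw|_{L^2}^2$ (nor by $\beta\|Q_K\bw\|_{H^1}^2$ with an $N$-independent constant). You should instead keep half of the high-mode Stokes dissipation, as in \eqref{est7Fourier}: $-\beta\|P_K\bw\|_{H^1}^2\le-\beta\|\bw\|_{H^1}^2+\tfrac{\nu}{2}|A\bw|_{L^2}^2$, so that a full $\tfrac{\nu}{2}|A\bw|_{L^2}^2$ remains available for the nonlinear terms. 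Also note that applying \eqref{ineqConseqBrezisGallouet} to $(B(\bp,\bw),A\bw)_{L^2}$ places the logarithm on $\bp$, giving an $L_N$ factor; the paper avoids this by using \eqref{ineqTiti2} for that term so that the logarithm lands on $\bw$ and can be fed into the $\phi(r)$ argument.
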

\begin{proof}
  Projecting \eqref{NSEeqs} onto $P_N H$, we have
  \be\label{eqNSEprojPN}
    \frac{\rd \bp}{\rd t} + \nu A \bp + P_N B(\bu, \bu) = P_N \bg.
  \ee

  Denote $\bw = \bv_N - \bp$. Subtracting \eqref{eqNSEprojPN} from \eqref{eqDataAssFourierGalerkin}, we obtain that
 \be\label{eqwFourier}
  \frac{\rd \bw}{\rd t} + \nu A \bw + P_N [ B(\bv_N,\bv_N) - B(\bu,\bu)] = -\beta P_K \bw.
 \ee

 Notice that
 \begin{multline}\label{rewritenonlinearterms}
  B(\bv_N,\bv_N) - B(\bu,\bu) =  B(\bv_N,\bv_N) - B(\bp + \bq, \bp + \bq) \\
  = B(\bv_N,\bv_N) - B(\bp,\bp) - B(\bp, \bq) - B(\bq,\bp) - B(\bq,\bq) \\
  = B(\bw, \bp) + B(\bp,\bw) + B(\bw,\bw) - B(\bp, \bq) - B(\bq,\bp) - B(\bq,\bq),
 \end{multline}

 Thus, from \eqref{eqwFourier} and \eqref{rewritenonlinearterms}, we have
 \begin{multline}\label{eqw2Fourier}
   \frac{\rd \bw}{\rd t} + \nu A \bw = -P_N[ B(\bw, \bp) + B(\bp,\bw) + B(\bw,\bw) - B(\bp, \bq) - B(\bq,\bp) - B(\bq,\bq)] \\
   -\beta P_K \bw.
 \end{multline}

 Taking the inner product in $L^2$ of \eqref{eqw2Fourier} with $A\bw$, yields
 \begin{multline}\label{enstrophyeqwFourier}
  \frac{1}{2} \frac{\rd}{\rd t} \|\bw\|_{H^1}^2 + \nu |A\bw|_{L^2}^2 = -(B(\bw, \bp),A\bw)_{L^2} - (B(\bp,\bw),A\bw)_{L^2} \\
  - (B(\bw,\bw),A\bw)_{L^2} + (B(\bp,\bq),A\bw)_{L^2} + (B(\bq,\bp),A\bw)_{L^2} + (B(\bq,\bq),A\bw)_{L^2} \\
  - \beta\|P_K \bw\|_{H^1}^2.
 \end{multline}

 Now we estimate the terms in the right-hand side of \eqref{enstrophyeqwFourier}.

 Using \eqref{ineqConseqBrezisGallouet} and \eqref{unifboundsattractor}, we obtain that
 \be\label{est1Fourier}
  |(B(\bw, \bp),A\bw)_{L^2}| \leq c_B M_1 \|\bw\|_{H^1} |A \bw|_{L^2} \left[ 1 + \log\left( \frac{|A \bw|_{L^2}}{\lambda_1^{1/2}\|\bw\|_{H^1}}\right) \right]^{1/2},
 \ee
 \be\label{est2Fourier}
  |(B(\bw, \bw),A\bw)_{L^2}| \leq c_B \|\bw\|_{H^1}^2 |A \bw|_{L^2} \left[ 1 + \log\left( \frac{|A \bw|_{L^2}}{\lambda_1^{1/2}\|\bw\|_{H^1}}\right) \right]^{1/2}.
 \ee

 Thanks to \eqref{ineqConseqBrezisGallouet}, \eqref{unifboundsattractor} and \eqref{boundqH1}, we have
 \begin{multline}\label{est3Fourier}
  |(B(\bp, \bq), A\bw)_{L^2}| \leq c_B \|\bp\|_{H^1} \|\bq\|_{H^1} |A\bw|_{L^2} \left[ 1 + \log\left( \frac{|A \bp|_{L^2}}{\lambda_1^{1/2}\|\bp\|_{H^1}}\right) \right]^{1/2} \\
  \leq c_B M_1 C_1 \frac{L_N^2}{\lambda_{N+1}^{1/2}} |A\bw|_{L^2} \leq \frac{\nu}{12}|A\bw|_{L^2}^2 + c \frac{C_1^2}{\nu} \frac{L_N^4}{\lambda_{N+1}}M_1^2.
 \end{multline}
 %with $L_N$ and $C_1$ as defined in \eqref{defLN} and \eqref{defC1}, respectively.

 From \eqref{ineqTiti1} and \eqref{boundqH1}, it follows that
 \begin{multline}\label{est4Fourier}
  |(B(\bq, \bq), A\bw)_{L^2}| \leq c_T \|\bq\|_{H^1}^2 |A\bw|_{L^2} \left[ 1 + \log\left( \frac{|A^{3/2} \bw|_{L^2}}{\lambda_1^{1/2}|A\bw|_{L^2}}\right) \right]^{1/2}\\
  \leq c_T C_1^2  \frac{L_N^3}{\lambda_{N+1}} |A\bw|_{L^2} \leq \frac{\nu}{12} |A\bw|_{L^2}^2 + c \frac{C_1^4}{\nu} \frac{L_N^6}{\lambda_{N+1}^2}.
 \end{multline}

 From \eqref{ineqTiti2} and \eqref{unifboundsattractor}, we obtain that
 \be\label{est5Fourier}
  |(B(\bp,\bw),A\bw)_{L^2}| \leq c_T M_1 \|\bw\|_{H^1} |A \bw|_{L^2}\left[ 1 + \log\left( \frac{|A \bw|_{L^2}}{\lambda_1^{1/2}\|\bw\|_{H^1}}\right) \right]^{1/2}.
 \ee

 Moreover, \eqref{ineqTiti2} and \eqref{boundqH1} imply
 \begin{multline}\label{est6Fourier}
  |(B(\bq, \bp), A\bw)_{L^2}| \leq c_T \|\bq\|_{H^1} \|\bp\|_{H^1} |A\bw|_{L^2} \left[ 1 + \log\left( \frac{|A \bp|_{L^2}}{\lambda_1^{1/2}\|\bp\|_{H^1}}\right) \right]^{1/2} \\
  \leq c_T C_1  \frac{L_N^2}{\lambda_{N+1}^{1/2}} M_1 |A\bw|_{L^2} \leq \frac{\nu}{12}|A\bw|_{L^2}^2 + c_T^2 \frac{C_1^2}{\nu} \frac{L_N^4}{\lambda_{N+1}}M_1^2.
 \end{multline}

 Also, observe that
 \begin{eqnarray}\label{est7Fourier}
  -\beta \| P_K \bw\|_{H^1}^2 &=& - \beta \|\bw \|_{H^1}^2 + \beta \|Q_K \bw\|_{H^1}^2 \nonumber \\
  &\leq& - \beta \|\bw \|_{H^1}^2 + \frac{\beta}{\lambda_{K+1}} |A\bw|_{L^2}^2 \nonumber \\
  &\leq& - \beta \|\bw \|_{H^1}^2 + \frac{\nu}{2} |A\bw|_{L^2}^2,
 \end{eqnarray}
 where in the last inequality we used hypothesis \eqref{condKFourier}.

 Plugging estimates \eqref{est1Fourier}-\eqref{est7Fourier} into \eqref{enstrophyeqwFourier}, we obtain that
 \begin{multline}\label{enstrophyineqw1Fourier}
  \frac{\rd }{\rd t}\|\bw\|_{H^1}^2 + \frac{\nu}{2} |A\bw|_{L^2}^2 \leq -\beta \|\bw\|_{H^1}^2 \\
  + c M_1 \|\bw\|_{H^1} |A \bw|_{L^2} \left[ 1 + \log\left( \frac{|A \bw|_{L^2}}{\lambda_1^{1/2}\|\bw\|_{H^1}}\right) \right]^{1/2} \\
  + c \|\bw\|_{H^1}^2 |A \bw|_{L^2} \left[ 1 + \log\left( \frac{|A \bw|_{L^2}}{\lambda_1^{1/2}\|\bw\|_{H^1}}\right) \right]^{1/2} + c \frac{C_1^4}{\nu} \frac{L_N^6}{\lambda_{N+1}^2}
  + c \frac{C_1^2}{\nu} \frac{L_N^4}{\lambda_{N+1}}M_1^2.
 \end{multline}

 Since $\bv_N \in \mC([t_0, \infty);V)$ (\cite[Theorem 5]{AzouaniOlsonTiti2014}) and
 \[
  \|\bw(t_0)\|_{H^1} \leq \|P_N \bv_0\|_{H^1} + \|\bp(t_0)\|_{H^1} \leq 2 M_1,
 \]
 then there exists $\tau \in (t_0, \infty)$ such that
 \[
  \|\bw(t)\|_{H^1} \leq 3 M_1, \quad \forall t \in [t_0, \tau].
 \]

 Define
 \be\label{defttildeFourier}
  \tilde{t} = \sup\left\{ \tau \in (t_0, \infty) : \max_{t\in[t_0,\tau]} \|\bw(t)\|_{H^1} \leq 3 M_1 \right\}.
 \ee

 Suppose that $\tilde{t} < \infty$.

 Then, from \eqref{enstrophyineqw1Fourier}, we obtain that, for all $t \in [t_0, \tilde{t}]$,
 \begin{multline}\label{enstrophyineqw2Fourier}
  \frac{\rd }{\rd t}\|\bw\|_{H^1}^2 + \frac{\nu}{2} |A\bw|_{L^2}^2 \leq -\beta \|\bw\|_{H^1}^2 \\
  + c M_1 \|\bw\|_{H^1} |A \bw|_{L^2} \left[ 1 + \log\left( \frac{|A \bw|_{L^2}}{\lambda_1^{1/2}\|\bw\|_{H^1}}\right) \right]^{1/2}
  + c \frac{C_1^4}{\nu} \frac{L_N^6}{\lambda_{N+1}^2}
  + c \frac{C_1^2}{\nu} \frac{L_N^4}{\lambda_{N+1}}M_1^2.
 \end{multline}

 Observe that
 \begin{multline}\label{absorbingtermsFourier}
  \frac{\nu}{4} |A\bw|_{L^2}^2 - c M_1 \|\bw\|_{H^1} |A\bw|_{L^2} \left( 1 + \log\left( \frac{|A\bw|_{L^2}}{\lambda_1^{1/2} \|\bw\|_{H^1}} \right) \right)^{1/2} + \frac{\beta}{2} \|\bw\|_{H^1}^2 \\
  = \frac{\nu \lambda_1}{4} \|\bw\|_{H^1}^2 \left[ \frac{|A\bw|_{L^2}^2}{\lambda_1 \|\bw\|_{H^1}^2} - c \frac{M_1}{\nu \lambda_1^{1/2}} \frac{|A\bw|_{L^2}}{\lambda_1^{1/2} \|\bw\|_{H^1}}  \left( 1 + \log\left( \frac{|A\bw|_{L^2}^2}{\lambda_1 \|\bw\|_{H^1}^2} \right) \right)^{1/2}\right.\\
 \left. + \frac{2\beta}{\nu \lambda_1} \right].
 \end{multline}

 Define
 \be\label{defphiFourier}
  \phi(r) = r^2 - \rho r ( 1 + \log (r^2) )^{1/2} + B, \quad r \geq 1,
 \ee
 where
 \be\label{defrhoBFourier}
  \rho = c \frac{M_1}{\nu \lambda_1^{1/2}}, \quad B = \frac{2\beta}{\nu \lambda_1}.
 \ee
 Notice that
 \be\label{phiFourier}
 \phi(r) = \frac{r( \widetilde{\phi}(r^2) + B) + \rho(1 + \log (r^2))^{1/2}}{r + \rho(1 + \log (r^2))^{1/2}},
 \ee
 where
 \be\label{defphitildeFourier}
 \widetilde{\phi}(r) = r - {\rho}^2(1 + \log r).
 \ee

 One easily verifies that
 \be\label{minphiFourier}
 \min_{r\geq 1} \widetilde{\phi}(r) \geq -{\rho}^2 \log ({\rho}^2).
 \ee

 Thus, from \eqref{phiFourier} and \eqref{minphiFourier}, it follows that if
 \be\label{condrhoBFourier}
  B \geq {\rho}^2 \log ({\rho}^2),
 \ee
 then
 \be\label{phipositiveFourier}
  \phi(r) \geq 0, \quad \forall r \geq 1.
 \ee

 Now, by the definition of $\rho$ and $B$ in \eqref{defrhoBFourier}, we see that \eqref{condrhoBFourier} follows from hypothesis \eqref{condbetaFourier} on $\beta$.

 Using the fact \eqref{phipositiveFourier} with
 \[
  r =  \frac{|A\bw|_{L^2}}{\lambda_1^{1/2} \|\bw\|_{H^1}} \geq 1,
 \]
 we conclude that the right-hand side of \eqref{absorbingtermsFourier} is non-negative. Thus, from \eqref{enstrophyineqw2Fourier}, it follows that
 \be\label{enstrophyineqw3Fourier}
   \frac{\rd }{\rd t}\|\bw\|_{H^1}^2 + \frac{\nu}{4} |A\bw|_{L^2}^2 \leq -\frac{\beta}{2} \|\bw\|_{H^1}^2
   + c \frac{C_1^4}{\nu} \frac{L_N^6}{\lambda_{N+1}^2} + c \frac{C_1^2}{\nu} \frac{L_N^4}{\lambda_{N+1}}M_1^2.
 \ee

 Ignoring the second term on the left-hand side of \eqref{enstrophyineqw3Fourier} and integrating from $t_0$ to $t \in [t_0,\tilde{t}]$, we obtain that
 \begin{multline}\label{enstrophyineqw4Fourier}
  \|\bw(t)\|_{H^1}^2 \leq \|\bw(t_0)\|_{H^1}^2 \Exp^{-\frac{\beta}{2}(t-t_0)} \\
  + \frac{c}{\beta} \left[ \frac{C_1^4}{\nu} \frac{L_N^6}{\lambda_{N+1}^2} + \frac{C_1^2}{\nu} \frac{L_N^4}{\lambda_{N+1}}M_1^2 \right] (1 - \Exp^{-\frac{\beta}{2}(t-t_0)}).
 \end{multline}

 Notice that the functions
 \[
  f_1(x) = \frac{(1 + \log x)^3}{x^2}, \quad f_2(x) = \frac{(1 + \log x)^2}{x}
 \]
are both decreasing for $x \geq \Exp$. Since $N \geq K$ and, by hypotheses \eqref{condmFourier}, \eqref{condbetaFourier} and \eqref{condKFourier}, we have
\[
 \frac{\lambda_{N+1}}{\lambda_1} \geq \frac{\lambda_{K+1}}{\lambda_1} \geq \frac{2 \beta}{\nu\lambda_1} \geq \frac{2\lambda_m}{\lambda_1} \geq \Exp,
\]
it then follows that
\be\label{estquotientLNlambdaN1}
 \frac{L_N^6}{\lambda_{N+1}^2} \leq c \frac{L_m^6}{\lambda_m^2},
\ee
and
\be\label{estquotientLNlambdaN2}
 \frac{L_N^4}{\lambda_{N+1}} \leq c \frac{L_m^4}{\lambda_m}.
\ee

Plugging \eqref{estquotientLNlambdaN1} and \eqref{estquotientLNlambdaN2} into \eqref{enstrophyineqw4Fourier} and using hypothesis \eqref{condmFourier} with a suitable absolute constant $c$, we obtain that
 \be
  \|\bw(t)\|_{H^1}^2 \leq \|\bw(t_0)\|_{H^1}^2 \Exp^{-\frac{\beta}{2}(t-t_0)} + 4 M_1^2 (1 - \Exp^{-\frac{\beta}{2}(t-t_0)}) \leq 4 M_1^2, \quad \forall t \in [t_0, \tilde{t}].
 \ee
 Thus,
 \be
  \|\bw(t)\|_{H^1} \leq 2 M_1, \quad \forall t \in [t_0, \tilde{t}].
 \ee
 In particular, $\|\bw(\tilde{t})\|_{H^1} \leq 2 M_1$, which, by the definition of $\tilde{t}$ and the fact that $\bw \in \mC([t_0, \infty);V)$, contradicts the assumption that $\tilde{t} < \infty$. Therefore, the above argument implies
 \be
  \|\bw(t)\|_{H^1} \leq 2 M_1, \quad \forall t \geq t_0.
 \ee
 %Hence,
 %\be
 % \|\bv_N(t)\|_{H^1} \leq \|\bw(t)\|_{H^1} + \|\bp(t)\|_{H^1} \leq 3 M_1, \quad \forall t \geq t_0.
 %\ee
\end{proof}

% \begin{rmk}
% The reason for considering the factor $2$ in the radius of the ball of the initial condition $\bv_0$ is solely due to the argument needed for the proof of Theorem \ref{thmerrorPPGM} below. In fact, we could also state Lemma \ref{thmUnifBoundvNH1Fourier} by assuming $\bv_0 \in B_V(0; \gamma M_1)$, with $\gamma > 0$, and conclude that
% \[
% \sup_{t \geq t_0} \|\bv_N (t) - \bu(t)\|_{H^1} \leq (1+ \gamma) M_1.
% \]
% But in this case, the absolute constant $c$ in condition \eqref{condbetaFourier} for $\beta$ would be replaced by an expression that increases with $\gamma$.
% \end{rmk}

Next, we present a technical lemma.

\begin{lem}\label{lemesty}
Assume that $y: [t_0,\infty) \rightarrow [0,\infty)$ is a continuous function satisfying
\be\label{lemesty1}
y(s) \leq a \Exp^{-b (s - t)} y(t) + \gamma \sup_{t \leq \tau \leq s} y(\tau) + \varepsilon, \quad \forall  s \geq t \geq t_0,
\ee
with $\varepsilon \geq 0$, $a \geq 0$, $b > 0$ and $\gamma \in (0,1)$ such that
\be\label{condtheta}
 \theta = a \left( \Exp^{-\frac{b}{\nu \lambda_1}} + \frac{\gamma}{1 - \gamma}\right) < 1.
\ee
Then,
\be\label{lemesty1a}
y(t) \leq a \frac{\theta^{(t - t_0) \nu \lambda_1 -1}}{1 - \gamma} y(t_0) + \left( \frac{a}{(1-\theta)(1-\gamma)} + 1 \right) \frac{\varepsilon}{1 - \gamma}, \quad \forall t \geq t_0.
\ee
\end{lem}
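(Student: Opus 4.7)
The plan is to reduce the hypothesis \eqref{lemesty1} to a plain discrete Gr\"onwall recurrence in two stages: first eliminate the supremum on the right-hand side to obtain a purely pointwise bound, and then iterate that bound over a geometric grid of step size $\Delta = 1/(\nu \lambda_1)$.

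For the first stage, fix $s \geq t \geq t_0$ and apply \eqref{lemesty1} at an arbitrary $s' \in [t,s]$. Since $\Exp^{-b(s'-t)} \leq 1$ and $\sup_{t\leq \tau \leq s'} y(\tau) \leq \sup_{t\leq \tau \leq s} y(\tau)$, this gives $y(s') \leq a\, y(t) + \gamma \sup_{t\leq \tau \leq s} y(\tau) + \varepsilon$. Taking $\sup_{s' \in [t,s]}$ on both sides, the supremum on the left absorbs the matching term on the right, and using $\gamma \in (0,1)$ yields
\[
\sup_{t \leq \tau \leq s} y(\tau) \leq \frac{a\, y(t) + \varepsilon}{1 - \gamma}.
\]
Plugging this back into \eqref{lemesty1} and simplifying with $1 + \gamma/(1-\gamma) = 1/(1-\gamma)$ produces the pointwise inequality
\[
y(s) \leq a \left( \Exp^{-b(s-t)} + \frac{\gamma}{1-\gamma} \right) y(t) + \frac{\varepsilon}{1-\gamma}, \quad \forall s \geq t \geq t_0.
\]

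For the second stage, I would specialize to $s - t = \Delta := 1/(\nu \lambda_1)$, which is precisely the choice making the prefactor equal to the $\theta$ of \eqref{condtheta}. Writing $t_n = t_0 + n\Delta$ gives the discrete recurrence $y(t_{n+1}) \leq \theta\, y(t_n) + \varepsilon/(1-\gamma)$, whence a one-line induction and the geometric series sum (using $\theta < 1$) yield
\[
y(t_n) \leq \theta^n\, y(t_0) + \frac{\varepsilon}{(1-\theta)(1-\gamma)}, \quad \forall n \geq 0.
\]
Finally, for an arbitrary $t \geq t_0$, I would write $t = t_n + r$ with $n = \lfloor (t - t_0)/\Delta \rfloor$ and $r \in [0,\Delta)$, and apply the pointwise bound from the first stage on the interval $[t_n, t]$, using $\Exp^{-br} \leq 1$, to get $y(t) \leq \frac{a}{1-\gamma} y(t_n) + \frac{\varepsilon}{1-\gamma}$. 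Combining this with the discrete estimate and using $\theta^n \leq \theta^{(t - t_0)\nu \lambda_1 - 1}$ (valid since $\theta < 1$ and $n > (t-t_0)\nu\lambda_1 - 1$) gives exactly \eqref{lemesty1a}.

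The proof is essentially bookkeeping, and I do not expect a genuine obstacle. The only slightly nontrivial choice is the discretization step $\Delta = 1/(\nu\lambda_1)$, which is not forced by the hypothesis itself but is the natural viscous time scale: any other $\Delta > 0$ would yield an analogous bound with a correspondingly modified $\theta$, but this particular choice is the one that fits the applications in Sections \ref{subsecFouriermodescase} and \ref{subsecAnotherClassIntOp}.
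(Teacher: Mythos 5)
Your proof is correct and follows essentially the same route as the paper's: both derive the supremum bound $\sup_{t\le\tau\le s}y(\tau)\le (a\,y(t)+\varepsilon)/(1-\gamma)$ by absorbing the sup term, obtain the one-step contraction $y(t_{n+1})\le\theta\,y(t_n)+\varepsilon/(1-\gamma)$ on the grid $t_n=t_0+n(\nu\lambda_1)^{-1}$, iterate, and interpolate to arbitrary $t$ via the same factor $a/(1-\gamma)$ together with $\theta^n\le\theta^{(t-t_0)\nu\lambda_1-1}$. The only cosmetic difference is that you first convert the hypothesis into a pointwise inequality before discretizing, whereas the paper works directly with the supremum over each subinterval; the resulting estimates are identical.
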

\begin{proof}
Taking the sup on both sides of \eqref{lemesty1} over $s \in [t, t + (\nu \lambda_1)^{-1}]$, it follows that
\[ \sup_{t \leq s \leq t + (\nu \lambda_1)^{-1}} y(s) \leq a y(t) + \gamma \sup_{t \leq \tau \leq t + (\nu \lambda_1)^{-1}} y(\tau)  + \varepsilon.
\]
Thus,
\be\label{lemestsup}
 \sup_{t \leq \tau \leq t + (\nu \lambda_1)^{-1}} y(\tau)  \leq \frac{a}{1 - \gamma} y(t) + \frac{\varepsilon}{1 - \gamma}.
\ee
Using \eqref{lemestsup} in \eqref{lemesty1} with $s = t + (\nu \lambda_1)^{-1}$, $t \geq t_0$, yields
\be\label{lemesty2}
 y(t + (\nu \lambda_1)^{-1}) \leq \theta y(t) + \frac{\varepsilon}{1 - \gamma},
\ee
with $\theta$ as defined in \eqref{condtheta}.

For each $n \in \mathbb{N}$, let $t_n = t_0 + n(\nu\lambda_1)^{-1}$. Since \eqref{lemesty2} is valid for every $t \geq t_0$, in particular,
\be\label{lemesty3}
 y(t_n) = y(t_{n-1} + (\nu\lambda_1)^{-1}) \leq \theta y(t_{n-1}) + \frac{\varepsilon}{1 - \gamma}, \quad \forall n \in \mathbb{N}.
\ee

Hence, by induction, one has
\be\label{lemesty4}
 y(t_n) \leq \theta^n y(t_0) + \frac{\varepsilon}{(1-\theta)(1-\gamma)}, \quad \forall n \in \mathbb{N}.
\ee

Using \eqref{lemesty4} in \eqref{lemestsup} with $t = t_n$, it follows that
\be\label{lemesty4a}
 \sup_{t_n \leq s \leq t_{n+1}} y(s) \leq a\frac{\theta^n}{1 - \gamma} y(t_0) + \left( \frac{a}{(1-\theta)(1-\gamma)}+1\right) \frac{\varepsilon}{1-\gamma}.
\ee

Notice that, for every $t \in [t_n, t_{n+1}]$,
\be\label{eqn}
 n = (t_{n+1} - t_0) \nu \lambda_1 -1 \geq (t - t_0) \nu \lambda_1 -1.
\ee

Since $\theta \in [0,1)$, by hypothesis \eqref{condtheta}, it then follows from \eqref{lemesty4a} and \eqref{eqn} that, for every $t \in [t_n,t_{n+1}]$,
\be\label{lemesty5}
 y(t) \leq \sup_{t_n \leq s \leq t_{n+1}} y(s) \leq a\frac{\theta^{(t - t_0) \nu \lambda_1 -1}}{1 - \gamma} y(t_0) + \left( \frac{a}{(1-\theta)(1-\gamma)}+1\right) \frac{\varepsilon}{1-\gamma}.
\ee

Since \eqref{lemesty5} is valid for any $n \in \mathbb{N}$, \eqref{lemesty1a} follows.
\end{proof}

The following proposition is a direct consequence of the result proved in \cite[Lemma 1]{GarciaNovoTiti1999} (see also \cite{Titi1990}).

\begin{prop}\label{lemboundG}
 Let $\bu$ be a solution of \eqref{NSEeqs} satisfying \eqref{unifboundsattractor}-\eqref{boundqH1}, for every $t \geq t_0$. Then, the following inequalities hold
 \be
  |A^{-1} P_N B(\bp, \bq)|_{L^2},\, |A^{-1} P_N B(\bq,\bp)|_{L^2} \leq c M_1 L_N \|\bq\|_{V'},
 \ee
 \be
  |A^{-1} P_N B(\bq,\bq)|_{L^2} \leq c L_N |\bq|_{L^2}^2.
 \ee
\end{prop}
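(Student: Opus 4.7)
The plan is to use duality, the antisymmetry property \eqref{PropBilTerm} of $B$, and two-dimensional Brezis-Gallouet logarithmic inequalities. Since $A^{-1}$ and $P_N$ commute, for any $\bu_1,\bu_2\in V$ we have by duality
\[
|A^{-1} P_N B(\bu_1,\bu_2)|_{L^2} = \sup_{\substack{\boldsymbol{\phi}\in H \\ |\boldsymbol{\phi}|_{L^2}\leq 1}}\bigl|\langle B(\bu_1,\bu_2),\boldsymbol{\psi}\rangle_{V',V}\bigr|,
\]
where $\boldsymbol{\psi}:=P_N A^{-1}\boldsymbol{\phi}\in P_N D(A)$ satisfies $|A\boldsymbol{\psi}|_{L^2}\leq 1$ and $\|\boldsymbol{\psi}\|_{H^1}\leq \lambda_1^{-1/2}$. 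Combining the Brezis-Gallouet inequality \eqref{ineqBrezisGallouet} with the inverse estimate $|A\bu|_{L^2}\leq \lambda_N^{1/2}\|\bu\|_{H^1}$ on $P_N H$ yields the two key logarithmic bounds
\[
\|\bp\|_{L^\infty}\leq c M_1 L_N, \qquad \|\nabla\boldsymbol{\psi}\|_{L^\infty}\leq c L_N,
\]
with $L_N$ as in \eqref{defLN}; the first bound uses $\bp\in P_N H$ with $\|\bp\|_{H^1}\leq M_1$, and the second applies a scalar two-dimensional Brezis-Gallouet analog componentwise to $\nabla\boldsymbol{\psi}\in H^1\cap H^2$, exploiting $\|\nabla\boldsymbol{\psi}\|_{H^1}\leq c|A\boldsymbol{\psi}|_{L^2}\leq c$ and $\|\nabla\boldsymbol{\psi}\|_{H^2}\leq c\lambda_N^{1/2}|A\boldsymbol{\psi}|_{L^2}\leq c\lambda_N^{1/2}$.

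For the first two inequalities, I would use \eqref{PropBilTerm} to rewrite
\[
\langle B(\bp,\bq),\boldsymbol{\psi}\rangle_{V',V}=-((\bp\cdot\nabla)\boldsymbol{\psi},\bq)_{L^2}, \qquad \langle B(\bq,\bp),\boldsymbol{\psi}\rangle_{V',V}=-(\bq,\boldsymbol{\mu})_{L^2},
\]
where $\boldsymbol{\mu}_i:=\sum_j\partial_i\boldsymbol{\psi}_j\,\bp_j$. Each of these $H^1$ test fields $\boldsymbol{\nu}$ (namely $(\bp\cdot\nabla)\boldsymbol{\psi}$ or $\boldsymbol{\mu}$) satisfies $\|\boldsymbol{\nu}\|_{H^1}\leq c M_1 L_N$ by the product rule combined with the two bounds above; for instance,
\[
\|(\bp\cdot\nabla)\boldsymbol{\psi}\|_{H^1}\leq c\|\bp\|_{L^\infty}|A\boldsymbol{\psi}|_{L^2}+c\|\nabla\bp\|_{L^2}\|\nabla\boldsymbol{\psi}\|_{L^\infty}+c\|\bp\|_{L^\infty}\|\nabla\boldsymbol{\psi}\|_{L^2}\leq c M_1 L_N.
\]
Since $\bq$ is divergence-free with vanishing normal trace, $(\bq,\boldsymbol{\nu})_{L^2}=(\bq,P_\sigma\boldsymbol{\nu})_{L^2}$; the duality between $V$ and $V'$ then produces $|(\bq,P_\sigma\boldsymbol{\nu})|\leq \|\bq\|_{V'}\|P_\sigma\boldsymbol{\nu}\|_V\leq c M_1 L_N\|\bq\|_{V'}$, giving the first two estimates. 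For the third, antisymmetry reduces the task to bounding $|\int(\bq\cdot\nabla)\boldsymbol{\psi}\cdot\bq|$, which by H\"older is at most $\|\nabla\boldsymbol{\psi}\|_{L^\infty}|\bq|_{L^2}^2\leq c L_N|\bq|_{L^2}^2$.

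The main obstacle is the scalar Brezis-Gallouet bound $\|\nabla\boldsymbol{\psi}\|_{L^\infty}\leq c L_N$, since $\nabla\boldsymbol{\psi}$ is not divergence-free and \eqref{ineqBrezisGallouet} does not apply as stated; one must invoke the classical 2D scalar analog, which is standard but distinct from the version recorded in the paper. A secondary technicality, arising only in the no-slip case, is verifying that $P_\sigma\boldsymbol{\nu}\in V$ (not merely in $H$) so that the pairing $\|\bq\|_{V'}\|P_\sigma\boldsymbol{\nu}\|_V$ is meaningful; this is handled by exploiting that $\bp$ and $\boldsymbol{\psi}$ are smooth finite combinations of eigenfunctions of the Stokes operator, so that $\boldsymbol{\nu}$ and its Leray projection inherit the boundary regularity needed.
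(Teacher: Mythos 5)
The paper does not actually prove this proposition: it is quoted as ``a direct consequence of the result proved in [Lemma~1, Garc\'ia-Archilla--Novo--Titi 1999]''. Your duality argument --- testing against $\boldsymbol{\psi}=A^{-1}P_N\boldsymbol{\phi}$, using the antisymmetry \eqref{PropBilTerm}, and extracting the $L_N$ factors from Br\'ezis--Gallouet combined with the spectral inverse inequality on $P_NH$ --- is essentially the standard proof of that cited lemma, and in the \emph{periodic} case it is complete: there $V$ carries no trace condition, $P_\sigma$ commutes with derivatives and is bounded on $H^1$, so $P_\sigma\boldsymbol{\nu}\in V$ with $\|P_\sigma\boldsymbol{\nu}\|_{H^1}\leq c\|\boldsymbol{\nu}\|_{H^1}\leq cM_1L_N$ and the pairing against $\|\bq\|_{V'}$ is legitimate. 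Your two auxiliary bounds $\|\bp\|_{L^\infty}\leq cM_1L_N$ and $\|\nabla\boldsymbol{\psi}\|_{L^\infty}\leq cL_N$ are correct (the second does require the classical scalar 2D Br\'ezis--Gallouet inequality rather than \eqref{ineqBrezisGallouet}, as you note, but that is standard), and the third estimate $|A^{-1}P_NB(\bq,\bq)|_{L^2}\leq\|\nabla\boldsymbol{\psi}\|_{L^\infty}|\bq|_{L^2}^2$ needs no Leray projection at all, so it is unconditionally fine.

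The genuine gap is your resolution of the no-slip case. The inequality $|(\bq,\boldsymbol{\nu})_{L^2}|\leq\|\bq\|_{V'}\|P_\sigma\boldsymbol{\nu}\|_{V}$ requires $P_\sigma\boldsymbol{\nu}\in V=D(A^{1/2})$, and smoothness of $\bp$ and $\boldsymbol{\psi}$ does not deliver this. Writing $P_\sigma\boldsymbol{\nu}=\boldsymbol{\nu}-\nabla\pi$ with $\Delta\pi=\nabla\cdot\boldsymbol{\nu}$ and $\partial\pi/\partial n=\boldsymbol{\nu}\cdot n=0$ on $\partial\Omega$, the trace of $P_\sigma\boldsymbol{\nu}$ on the boundary is $-\nabla\pi=-(\partial_\tau\pi)\tau$, whose tangential part is generically nonzero even though $\boldsymbol{\nu}$ itself vanishes on $\partial\Omega$; so $P_\sigma\boldsymbol{\nu}$ lies in $H\cap H^1(\Omega)^2$ but not in $V$, and the eigenfunction expansion of $P_\sigma\boldsymbol{\nu}$ need not converge in $V$. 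Consequently the pairing with $\|\bq\|_{V'}=|A^{-1/2}\bq|_{L^2}$ is not justified as written, and replacing $\|\bq\|_{V'}$ by $\|\bq\|_{H^{-1}}$ does not help because the crucial gain $\lambda_{N+1}^{-1/2}$ used downstream (in \eqref{boundA-1PNGFourier}) comes precisely from the spectral characterization of $|A^{-1/2}\bq|_{L^2}$ on $Q_NH$. Closing this in the Dirichlet setting requires genuinely more work --- e.g., expanding $\boldsymbol{\nu}$ against the Stokes eigenfunctions and controlling the eigenpressure contributions, which is the content of the cited Lemma~1 --- so you should either restrict your argument to the periodic case or supply that additional estimate rather than appeal to boundary regularity of $P_\sigma\boldsymbol{\nu}$.
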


Using the results of Lemma \ref{lemesty} and Propositions \ref{thmUnifBoundvNH1Fourier} and \ref{lemboundG}, we can now obtain a uniform in time estimate for $|\bv_N(t) - \bp(t)|_{L^2}$. The proof below follows similar ideas to the proof of \cite[Theorem 2]{GarciaNovoTiti1999}. We use the notation $\|\cdot\|_{\mL(X)}$ to denote the operator norm in the space $\mL(X)$, the space of bounded linear operators on a Hilbert space $X$.

\begin{thm}\label{thmestvNPNu}
 Let $\bu$ be a solution of \eqref{NSEeqs} satisfying \eqref{unifboundsattractor}-\eqref{boundqH1}, for every $t \geq t_0$. Let $\bv_0 \in B_V(M_1)$, with $M_1$ as in \eqref{unifboundsattractor}. For every $N \in \mathbb{N}$, let $\bv_N$ be the unique solution of \eqref{eqDataAssFourierGalerkin} satisfying $\bv_N(t_0) = P_N \bv_0$. Fix $\alpha \in (1/2,1)$ and consider $m \in \mathbb{N}$ large enough such that
 \be\label{condmFourier2}
  \lambda_m \geq \max \left\{ \frac{\lambda_1 \Exp}{2}, c \frac{C_1}{\nu}L_m^2, c \left( \frac{C_1^2}{\nu M_1}\right)^{2/3} L_m^2, \left[ c c_\alpha \left( 1 + \frac{\Exp^{-\alpha}}{1-\alpha}  \right)\frac{|\Omega|^{\alpha - \frac{1}{2}} M_1}{\nu}\right]^{\frac{1}{1-\alpha}} \right\},
 \ee
 where $c_\alpha$ is the constant from \eqref{ineqA-alphadiffBuBv}.

 If $\beta > 0$ and $K \in \mathbb{N}$ are large enough such that
 \be\label{condbetaFourier2}
  \beta \geq \max\left\{ \nu \lambda_m, c \frac{M_1^2}{\nu}\left[ 1 + \log \left(\frac{M_1}{\nu\lambda_1^{1/2}}\right)\right] \right\}
 \ee
 and
 \be\label{condKFourier2}
  \lambda_{K+1} \geq \frac{2\beta}{\nu},
 \ee
 then, there exists $\theta = \theta(\beta) \in [0,1)$ and a constant $C = C(\nu, \lambda_1, |\bg|_{L^2})$ such that, for every $N \geq K$,
 \be\label{estsupwL2}
  |\bv_N(t) -\bp(t)|_{L^2} \leq c \theta^{(t - t_0) \nu \lambda_1 -1} |\bv_N(t_0) -\bp(t_0)|_{L^2} + C \frac{L_N^4}{\lambda_{N+1}^{3/2}}.
 \ee
\end{thm}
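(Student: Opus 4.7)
The plan is to derive a Duhamel representation for $\bw = \bv_N - \bp$ whose dissipative operator incorporates the nudging, estimate the four resulting terms in $L^2$, and close the argument using Lemma \ref{lemesty}. Subtracting \eqref{eqNSEprojPN} from \eqref{eqDataAssFourierGalerkin} and adding and subtracting $\beta\bw$, I exploit that in the Fourier-interpolant case $I_h = P_K$ commutes with $P_N$ and annihilates $\bq$ (since $K \leq N$ yields $P_K Q_N = 0$) to obtain
\begin{equation*}
\frac{\rd \bw}{\rd t} + (\nu A + \beta) \bw + P_N[B(\bv_N,\bv_N) - B(\bp,\bp)] = P_N G + \beta Q_K \bw,
\end{equation*}
where $G = B(\bp,\bq) + B(\bq,\bp) + B(\bq,\bq)$. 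Duhamel's formula then represents $\bw(s)$, for $s \geq t \geq t_0$, as the sum of the semigroup decay $\Exp^{-(s-t)(\nu A P_N + \beta P_N)}\bw(t)$, a nonlinear integral, a $G$-source integral, and an interpolation residual driven by $\beta Q_K\bw$.

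Now I estimate each piece in $L^2$. \emph{(i)} The semigroup decay satisfies $|\Exp^{-(s-t)(\nu A P_N + \beta P_N)}\bw(t)|_{L^2} \leq \Exp^{-\beta(s-t)}|\bw(t)|_{L^2}$. \emph{(ii)} For the nonlinear term, the uniform bound $\|\bv_N + \bp\|_{H^1} \leq 4M_1$ from Proposition \ref{thmUnifBoundvNH1Fourier} together with \eqref{ineqA-alphadiffBuBv} yields $|A^{-\alpha}[B(\bv_N,\bv_N) - B(\bp,\bp)]|_{L^2} \leq 4c_\alpha |\Omega|^{\alpha-1/2} M_1 |\bw|_{L^2}$, and the smoothing estimate $\|A^\alpha \Exp^{-u(\nu A P_N + \beta P_N)}\|_{\mL(P_N H)} \leq c (\nu u)^{-\alpha} \Exp^{-u\beta}$, combined with splitting $\int_0^\infty u^{-\alpha}\Exp^{-u\beta}\,du$ around $u = 1/\beta$, produces a term $\gamma_1 \sup_\tau|\bw(\tau)|_{L^2}$ with $\gamma_1 \leq c\, c_\alpha (1 + \Exp^{-\alpha}/(1-\alpha)) |\Omega|^{\alpha-1/2} M_1/(\nu^\alpha \beta^{1-\alpha})$; the last lower bound in \eqref{condmFourier2} combined with $\beta \geq \nu\lambda_m$ forces $\gamma_1$ strictly below any preassigned threshold. \emph{(iii)} For the source, a uniform-in-time bound on $|A^{-1} P_N G|_{L^2}$ obtained from Proposition \ref{lemboundG} combined with $\|\bq\|_{V'} \leq \lambda_{N+1}^{-1/2}|\bq|_{L^2}$ and the bounds \eqref{boundqL2}-\eqref{boundqH1}, together with the eigenvalue inequality $\lambda_j/(\nu\lambda_j + \beta) \leq 1/\nu$ on $P_N H$, produces a constant error $\varepsilon \leq C L_N^4/\lambda_{N+1}^{3/2}$. \emph{(iv)} For the interpolation residual, hypothesis \eqref{condKFourier2} gives $\nu\lambda_j + \beta \geq 3\beta$ on modes $j \geq K+1$, so $\beta/(\nu\lambda_j+\beta) \leq 1/3$, yielding the bound $(1/3)\sup_\tau|\bw(\tau)|_{L^2}$.

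Assembling the four pieces, $\bw$ satisfies
\begin{equation*}
|\bw(s)|_{L^2} \leq \Exp^{-\beta(s-t)}|\bw(t)|_{L^2} + \gamma \sup_{t\leq \tau \leq s}|\bw(\tau)|_{L^2} + \varepsilon,
\end{equation*}
with $\gamma = \gamma_1 + 1/3 < 1$ and $\varepsilon \leq C L_N^4/\lambda_{N+1}^{3/2}$. I then apply Lemma \ref{lemesty} with $a = 1$, $b = \beta$ and $y(t) = |\bw(t)|_{L^2}$: the condition $\theta = \Exp^{-\beta/(\nu\lambda_1)} + \gamma/(1-\gamma) < 1$ follows from $\beta \geq \nu\lambda_m \gg \nu\lambda_1$ (making $\Exp^{-\beta/(\nu\lambda_1)}$ negligible) and $\gamma < 1$ by construction, and the lemma yields precisely \eqref{estsupwL2} with $C$ depending only on $\nu$, $\lambda_1$ and $|\bg|_{L^2}$.

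The main obstacle is the delicate calibration of constants to guarantee $\gamma < 1$ strictly. The nonlinear coefficient $\gamma_1$ scales as $c_\alpha (1 + \Exp^{-\alpha}/(1-\alpha)) |\Omega|^{\alpha-1/2} M_1/(\nu^\alpha \beta^{1-\alpha})$, which is exactly why hypothesis \eqref{condmFourier2} raises this quantity to the power $1/(1-\alpha)$: substituting $\beta \geq \nu\lambda_m$ suppresses $\gamma_1$ below $2/3$, leaving the $1/3$ slack absorbed by the interpolation residual. Because $c_\alpha \to \infty$ as $\alpha \to 1/2^+$, the free parameter $\alpha \in (1/2,1)$ must be fixed consistently across hypothesis and estimate. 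A secondary but genuine subtlety is honest bookkeeping of the logarithmic factors in the source estimate: the $L_N$ in Proposition \ref{lemboundG}, the $L_N$ in \eqref{boundqL2}, and the log factors introduced inside the proof of Proposition \ref{lemboundG} through Br\'ezis--Gallouet-type inequalities must all be tracked to arrive at the $L_N^4/\lambda_{N+1}^{3/2}$ dependence in $\varepsilon$ rather than a naive $L_N^2/\lambda_{N+1}^{3/2}$.
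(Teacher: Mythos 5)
Your overall strategy coincides with the paper's: a Duhamel representation for $\bw=\bv_N-\bp$, the smoothing bound via $A^{-\alpha}$ and \eqref{ineqA-alphadiffBuBv} for the nonlinear term, Proposition \ref{lemboundG} for the source $G$, and Lemma \ref{lemesty} to close. The one structural difference is your treatment of the nudging: you put $\beta P_N$ into the generator and carry $\beta Q_K\bw$ as a source estimated by $\tfrac13\sup_\tau|\bw(\tau)|_{L^2}$, whereas the paper keeps the generator $\nu A P_N+\beta P_K$ (getting $a=2$ and no residual in $\gamma$) and, for the nonlinear term, splits the semigroup over $P_mH$, $P_KQ_mH$ and $P_NQ_KH$. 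Your single uniform smoothing bound $\|A^\alpha\Exp^{-u(\nu AP_N+\beta P_N)}\|\leq c(\nu u)^{-\alpha}\Exp^{-u\beta}$ is in fact cleaner than that three-way splitting, at the price of the extra $1/3$ in $\gamma$. Both routes are legitimate.

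There is, however, a genuine flaw in your closing calibration. Lemma \ref{lemesty} requires $\theta=a\bigl(\Exp^{-b/(\nu\lambda_1)}+\tfrac{\gamma}{1-\gamma}\bigr)<1$; the map $\gamma\mapsto\gamma/(1-\gamma)$ exceeds $1$ as soon as $\gamma>1/2$, so your assertion that ``$\theta<1$ follows from \dots{} $\gamma<1$ by construction'' is false, and the target ``suppress $\gamma_1$ below $2/3$'' does not close the argument: with $\gamma=\gamma_1+\tfrac13$ and $\gamma_1$ close to $2/3$ you would have $\gamma$ close to $1$ and $\theta$ arbitrarily large. Since you have already committed $1/3$ of the budget to the interpolation residual, you must force $\gamma_1$ well below $1/6$ (say $\gamma_1\leq 1/12$, giving $\gamma/(1-\gamma)\leq 5/7$, with $\Exp^{-\beta/(\nu\lambda_1)}$ controlled by $\beta\geq\nu\lambda_m\geq\nu\lambda_1\Exp/2$). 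This is achievable from \eqref{condmFourier2} by enlarging the absolute constant $c$ there, so the gap is repairable, but as written the proof does not establish $\theta<1$. A secondary point: your accounting for the $L_N^4$ in $\varepsilon$ misattributes the extra $L_N^2$. Proposition \ref{lemboundG} together with \eqref{boundqL2} already yields $|A^{-1}P_NG|_{L^2}\leq cC_0 L_N^2\lambda_{N+1}^{-3/2}(M_1+C_0L_N\lambda_{N+1}^{-1/2})$; the remaining factor $L_N^2$ comes from $\int_t^s\|\nu A\Exp^{-(s-\tau)(\nu AP_N+\beta P_N)}\|_{\mL(P_NH)}\,\rd\tau\leq 1+\log(\lambda_N/\lambda_1)=L_N^2$, obtained by integrating the upper envelope over the spectrum --- not from Br\'ezis--Gallouet factors inside Proposition \ref{lemboundG}, and not from the pointwise inequality $\nu\lambda_j/(\nu\lambda_j+\beta)\leq 1$, which would wrongly suggest an $O(1)$ bound for that integral (one cannot replace the integral of the maximum by the maximum of the integrals).
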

\begin{proof}
 Denote $\bw = \bv_N - \bp$. Subtracting \eqref{eqNSEprojPN} from \eqref{eqDataAssFourierGalerkin}, yields
 \begin{multline}\label{eqwThmFourier}
  \frac{\rd \bw}{\rd t} + \nu A \bw = - P_N [ B(\bv_N,\bv_N) - B(\bu,\bu)] -\beta P_K\bw \\
  = - P_N [ B(\bv_N,\bv_N) - B(\bp,\bp)] + P_N G - \beta P_K \bw,
 \end{multline}
 where
 \be
  G(t) = B(\bu(t),\bu(t)) - B(\bp(t),\bp(t)), \quad \forall t \geq t_0.
 \ee

 Using that $P_N \bw = \bw $, we can also rewrite \eqref{eqwThmFourier} as
 \be
  \frac{\rd \bw}{\rd t} + [\nu A P_N + \beta P_K] \bw = - P_N [ B(\bv_N,\bv_N) - B(\bp,\bp)] + P_N G.
 \ee

 Using Duhamel's formula, it follows that, for every $s \geq t \geq t_0$,
 \begin{multline}\label{ineqwDuhamelFourier}
  |\bw(s)|_{L^2} \leq |\Exp^{-(s - t) (\nu A P_N + \beta P_K)} \bw(t)|_{L^2} \\
  + \int_{t}^s \left|\Exp^{-(s-\tau) (\nu A P_N + \beta P_K)} P_N [ B(\bv_N(\tau),\bv_N(\tau)) - B(\bp(\tau),\bp(\tau))] \right|_{L^2} \rd \tau \\
  + \int_{t}^s \left| \Exp^{-(s-\tau) (\nu A P_N + \beta P_K)} P_N G(\tau) \right|_{L^2} \rd \tau.
 \end{multline}

 We now estimate each term on the right-hand side of \eqref{ineqwDuhamelFourier}.

 Notice that, for every $s \geq t \geq t_0$,
 \begin{multline}\label{estinitcondFourier}
  |\Exp^{-(s-t) (\nu A P_N + \beta P_K)} \bw(t)|_{L^2} \leq \\
  \leq (\|\Exp^{-(s-t) (\nu A P_K + \beta P_K)}\|_{\mL(P_K H)} + \|\Exp^{-(s-t) \nu A P_N Q_K}\|_{\mL(P_N Q_K H)}) |\bw(t)|_{L^2}  \\
  = \left[ \left( \max_{1 \leq j \leq K} \Exp^{-(s-t)(\nu \lambda_j + \beta)}\right) +  \left( \max_{K+1 \leq j \leq N} \Exp^{-(s-t)\nu \lambda_j}\right) \right] |\bw(t)|_{L^2} \\
  = \left( \Exp^{-(s-t)(\nu\lambda_1 + \beta)} + \Exp^{-(s-t)(\nu\lambda_{K+1})} \right) |\bw(t)|_{L^2} \\
  \leq 2 \Exp^{-(s-t)\beta} |\bw(t)|_{L^2},
 \end{multline}
where in the last inequality we used that $\nu \lambda_{K+1} \geq 2 \beta$, from hypothesis \eqref{condKFourier2}.

 Using \eqref{ineqA-alphadiffBuBv}, we obtain that
 \begin{multline}\label{estExpdiffBvNp}
   \left|\Exp^{-(s-\tau) (\nu A P_N + \beta P_K)} P_N [ B(\bv_N(\tau),\bv_N(\tau)) - B(\bp(\tau),\bp(\tau))] \right|_{L^2} \\
  = \frac{1}{\nu^\alpha}  \left|\nu^\alpha A^\alpha \Exp^{-(s-\tau) (\nu A P_N + \beta P_K)} A^{-\alpha} P_N [ B(\bv_N(\tau),\bv_N(\tau)) - B(\bp(\tau),\bp(\tau))] \right|_{L^2} \\
  \leq c_\alpha \frac{|\Omega|^{\alpha - \frac{1}{2}}}{\nu^\alpha} \|\nu^\alpha A^\alpha \Exp^{-(s-\tau) (\nu A P_N + \beta P_K)}\|_{\mL(P_N H)} \|\bv_N(\tau) + \bp(\tau)\|_{H^1} |\bw(\tau)|_{L^2}.
 \end{multline}

 By Proposition \ref{thmUnifBoundvNH1Fourier}, we have that
 \be
  \|\bv_N(\tau) + \bp(\tau)\|_{H^1} \leq 3 M_1, \quad \forall \tau \geq t_0, \quad \forall N \geq K.
 \ee

It then follows from \eqref{estExpdiffBvNp} that
 \begin{multline}
  \left|\Exp^{-(s-\tau) (\nu A P_N + \beta P_K)} P_N [ B(\bv_N(\tau),\bv_N(\tau)) - B(\bp(\tau),\bp(\tau))] \right|_{L^2} \\
  \leq 3 c_\alpha \frac{|\Omega|^{\alpha - \frac{1}{2}} M_1}{\nu^\alpha} \|\nu^\alpha A^\alpha \Exp^{-(s-\tau) (\nu A P_N + \beta P_K)}\|_{\mL(P_N H)} |\bw(\tau)|_{L^2}.
 \end{multline}

 Notice that, by using hypotheses \eqref{condbetaFourier2} and \eqref{condKFourier2}, we have
 \be\label{ineqKgeqm}
  \frac{\nu \lambda_{K+1}}{2} \geq \beta \geq \nu \lambda_m,
 \ee
 which implies in particular that $K \geq m$.

 Now, we write
 \begin{multline}\label{separatingExp}
  \Exp^{-(s-\tau) (\nu A P_N + \beta P_K)} = \Exp^{-(s-\tau) (\nu A P_m + \beta P_m)} + \Exp^{-(s-\tau) (\nu A P_K Q_m + \beta P_K Q_m)} \\
  + \Exp^{-(s-\tau) \nu A P_N Q_K}.
 \end{multline}

 Therefore,
 \begin{multline}\label{estdiffnonlineartermsFourier}
  \int_{t}^s \left|\Exp^{-(s-\tau) (\nu A P_N + \beta P_K)} P_N [ B(\bv_N(\tau),\bv_N(\tau)) - B(\bp(\tau),\bp(\tau))] \right|_{L^2} \rd \tau \\
  \leq 3c_\alpha\frac{ |\Omega|^{\alpha - \frac{1}{2}} M_1}{\nu^\alpha} \left( \sup_{t \leq \tau \leq s} |\bw(\tau)|_{L^2} \right) \int_{t}^s \|\nu^\alpha A^\alpha \Exp^{-(s-\tau) (\nu A P_N + \beta P_K)}\|_{\mL(P_N H)} \rd \tau \\
  \leq 3c_\alpha \frac{ |\Omega|^{\alpha - \frac{1}{2}} M_1}{\nu^\alpha} \left( \sup_{t \leq \tau \leq s} |\bw(\tau)|_{L^2} \right) \left( \int_{t}^s \|\nu^\alpha A^\alpha \Exp^{-(\xi-t) (\nu A P_m + \beta P_m)}\|_{\mL(P_m H)} \rd \xi + \right. \\
  + \int_{t}^s \|\nu^\alpha A^\alpha \Exp^{-(\xi-t) (\nu A P_K Q_m + \beta P_K Q_m)}\|_{\mL(P_K H)} \rd \xi \\
  \left. + \int_{t}^s \|\nu^\alpha A^\alpha \Exp^{-(\xi-t) \nu A P_N Q_K }\|_{\mL(P_N Q_K H)} \rd \xi \right),
 \end{multline}
 where in the second inequality we used \eqref{separatingExp} and applied the change of variables $\xi = s - \tau + t$.

 Notice that
 \begin{multline}\label{opnormPmHFourier}
  \| \nu^\alpha A^\alpha \Exp^{-(\xi - t)(\nu A P_m + \beta P_m)}\|_{\mL(P_m H)} = \max_{1 \leq j \leq m} (\nu \lambda_j)^\alpha \Exp^{- (\xi - t)(\nu \lambda_j + \beta)} \\
  \leq \Exp^{- (\xi - t) \beta} \max_{\nu\lambda_1 \leq x \leq \nu \lambda_m} x^\alpha \Exp^{- (\xi - t) x} \\
  = \Exp^{- (\xi - t) \beta} \cdot \left\{ \begin{array}{lll}
                                                                                                        \displaystyle (\nu \lambda_m)^\alpha \Exp^{- (\xi - t) \nu \lambda_m}, &\mbox{ if } \xi < t + \frac{\alpha}{\nu \lambda_m}, \vspace{0.2cm} \\ 
                                                                                                        \displaystyle \frac{\alpha^\alpha}{ (\xi - t)^\alpha} \Exp^{-\alpha}, &\mbox{ if } t + \frac{\alpha}{\nu \lambda_m} \leq \xi \leq t + \frac{\alpha}{\nu \lambda_1}, \vspace{0.2cm} \\
                                                                                                        \displaystyle (\nu \lambda_1)^\alpha \Exp^{- (\xi - t) \nu \lambda_1}, &\mbox{ if } \xi > t + \frac{\alpha}{\nu \lambda_1}.
                                                                                                       \end{array}\right.
 \end{multline}

 Let us decompose $[t,s]$ as the union of the intervals
 \begin{multline}
  I_1 = \left[t, t + \frac{\alpha}{\nu \lambda_m}\right] \cap [t, s], \quad I_2 = \left[ t + \frac{\alpha}{\nu \lambda_m}, t + \frac{\alpha}{\nu \lambda_1}\right] \cap [t, s],\\
  I_3 = \left[ t + \frac{\alpha}{\nu \lambda_1}, \infty\right) \cap [t, s].
 \end{multline}

 We then have
 \begin{multline}\label{ineqintI1Fourier}
  \int_{I_1}  \| \nu^\alpha A^\alpha \Exp^{-(\xi - t)(\nu A P_m + \beta P_m)}\|_{\mL(P_m H)} \rd \xi \leq \\
  \leq \int_{t}^{t + \frac{\alpha}{\nu \lambda_m}} (\nu \lambda_m)^\alpha \Exp^{- (\xi - t) (\nu \lambda_m + \beta)} \rd \xi = \frac{(\nu \lambda_m)^\alpha}{\nu \lambda_m + \beta} (1 - \Exp^{-\alpha} \Exp^{-\frac{\alpha \beta}{\nu \lambda_m}}),
 \end{multline}
 \begin{multline}\label{ineqintI2Fourier}
  \int_{I_2}  \| \nu^\alpha A^\alpha \Exp^{-(\xi - t)(\nu A P_m + \beta P_m)}\|_{\mL(P_m H)} \rd \xi \leq \\
  \leq \int_{t + \frac{\alpha}{\nu \lambda_m}}^{t + \frac{\alpha}{\nu \lambda_1}} \frac{\alpha^\alpha}{ (\xi - t)^\alpha} \Exp^{-\alpha} \Exp^{- (\xi - t) \beta} \rd \xi \leq (\nu \lambda_m)^\alpha \Exp^{-\alpha} \int_{t + \frac{\alpha}{\nu \lambda_m}}^{t + \frac{\alpha}{\nu \lambda_1}}  \Exp^{- (\xi - t) \beta} \rd \xi \\
  = \frac{(\nu \lambda_m)^\alpha \Exp^{-\alpha}}{\beta} ( \Exp^{-\frac{\alpha\beta}{\nu \lambda_m}} - \Exp^{-\frac{\alpha\beta}{\nu \lambda_1}})
 \end{multline}
 and
 \begin{multline}\label{ineqintI3Fourier}
  \int_{I_3}  \| \nu^\alpha A^\alpha \Exp^{-(\xi - t)(\nu A P_m + \beta P_m)}\|_{\mL(P_m H)} \rd \xi \leq \int_{t + \frac{\alpha}{\nu \lambda_1}}^\infty (\nu \lambda_1)^\alpha \Exp^{-(\xi-t)(\nu \lambda_1 + \beta)} \rd \xi \\
  = \frac{(\nu\lambda_1)^\alpha}{\nu \lambda_1 + \beta} \Exp^{-\alpha} \Exp^{-\frac{\alpha\beta }{\nu \lambda_1}}.
 \end{multline}

 Notice that the estimate in \eqref{ineqintI3Fourier} is smaller than the absolute value of the negative term in \eqref{ineqintI2Fourier}. Thus, from \eqref{ineqintI1Fourier}-\eqref{ineqintI3Fourier}, it follows that
 \begin{multline}\label{intopnormPmHFourier}
  \int_{t}^s \| \nu^\alpha A^\alpha \Exp^{-(\xi - t)(\nu A P_m + \beta P_m)}\|_{\mL(P_m H)} \rd \xi \leq \\
  \leq \frac{(\nu \lambda_m)^\alpha}{\nu \lambda_m + \beta} (1 - \Exp^{-\alpha} \Exp^{-\frac{\alpha \beta}{\nu \lambda_m}}) + \frac{(\nu \lambda_m)^\alpha}{\beta} \Exp^{-\alpha} \Exp^{-\frac{\alpha\beta}{\nu \lambda_m}} \leq \frac{(\nu \lambda_m)^\alpha}{\beta}.
 \end{multline}

 Now, similarly as in \eqref{opnormPmHFourier}, we have that
 \begin{multline}\label{opnormPKQmHFourier}
  \| \nu^\alpha A^\alpha \Exp^{-(\xi - t) (\nu A P_K Q_m + \beta P_K Q_m)}\|_{\mL(P_K Q_m H)} \leq \\
  \leq \Exp^{-(\xi-t)\beta} \cdot \left\{ \begin{array}{lll}
                                                                                                        \displaystyle (\nu \lambda_K)^\alpha \Exp^{- (\xi - t) \nu \lambda_K}, &\mbox{ if } \xi < t + \frac{\alpha}{\nu \lambda_K}, \vspace{0.2cm} \\ 
                                                                                                        \displaystyle \frac{\alpha^\alpha}{ (\xi - t)^\alpha} \Exp^{-\alpha}, &\mbox{ if } t + \frac{\alpha}{\nu \lambda_K} \leq \xi \leq t + \frac{\alpha}{\nu \lambda_{m+1}}, \vspace{0.2cm} \\
                                                                                                        \displaystyle (\nu \lambda_{m+1})^\alpha \Exp^{- (\xi - t) \nu \lambda_{m+1}}, &\mbox{ if } \xi > t + \frac{\alpha}{\nu \lambda_{m+1}}.
                                                                                                       \end{array}\right.
 \end{multline}

 We decompose $[t,s]$ as the union of the intervals
 \begin{multline}
  J_1 = \left[t, t + \frac{\alpha}{\nu \lambda_K}\right] \cap [t, s], \quad J_2 = \left[ t + \frac{\alpha}{\nu \lambda_K}, t + \frac{\alpha}{\nu \lambda_{m+1}}\right] \cap [t,s],\\
  J_3 = \left[ t + \frac{\alpha}{\nu \lambda_{m+1}}, \infty\right) \cap [t, s].
 \end{multline}

 We have
 \begin{multline}\label{ineqintJ1Fourier}
 \int_{J_1} \| \nu^\alpha A^\alpha \Exp^{-(\xi - t)(\nu A P_K Q_m + \beta P_K Q_m)}\|_{\mL(P_K Q_m H)} \rd \xi \leq \\
 \leq \int_{t}^{t + \frac{\alpha}{\nu \lambda_K}} (\nu \lambda_K)^\alpha \Exp^{- (\xi - t) \nu \lambda_K } \rd \xi = \frac{1 - \Exp^{-\alpha}}{(\nu \lambda_K)^{1-\alpha}},
 \end{multline}
 \begin{multline}\label{ineqintJ2Fourier}
  \int_{J_2} \| \nu^\alpha A^\alpha \Exp^{-(\xi - t)(\nu A P_K Q_m + \beta P_K Q_m)}\|_{\mL(P_K Q_m H)} \rd \xi \leq \\
  \leq \int_{t + \frac{\alpha}{\nu \lambda_K}}^{t + \frac{\alpha}{\nu \lambda_{m+1}}} \frac{\alpha^\alpha}{ (\xi - t)^\alpha} \Exp^{-\alpha} \rd \xi = \frac{\alpha \Exp^{-\alpha}}{1-\alpha} \left( \frac{1}{(\nu \lambda_{m+1})^{1-\alpha}} -  \frac{1}{(\nu \lambda_{K})^{1-\alpha}} \right)
 \end{multline}
 and
 \begin{multline}\label{ineqintJ3Fourier}
  \int_{J_3} \| \nu^\alpha A^\alpha \Exp^{-(\xi - t) (\nu A P_K Q_m + \beta P_K Q_m)}\|_{\mL(P_K Q_m H)} \rd \xi \leq \\
  \leq \int_{t + \frac{\alpha}{\nu \lambda_{m+1}}}^\infty (\nu \lambda_{m+1})^\alpha \Exp^{-(\xi-t)\nu \lambda_{m+1}} \rd \xi = \frac{\Exp^{-\alpha}}{(\nu\lambda_{m+1})^{1-\alpha}}.
 \end{multline}

 Thus, summing up \eqref{ineqintJ1Fourier}-\eqref{ineqintJ3Fourier}, we obtain
 \begin{multline}\label{intopnormPNQmHFourier}
  \int_{t}^s  \| \nu^\alpha A^\alpha \Exp^{-(\xi - t)(\nu A P_K Q_m + \beta P_K Q_m) }\|_{\mL(P_K Q_m H)} \rd \xi \leq \\
  \leq \frac{1 - \Exp^{-\alpha}}{(\nu \lambda_K)^{1-\alpha}} + \frac{\alpha \Exp^{-\alpha}}{1-\alpha} \left( \frac{1}{(\nu \lambda_{m+1})^{1-\alpha}} -  \frac{1}{(\nu \lambda_{K})^{1-\alpha}} \right) + \frac{\Exp^{-\alpha}}{(\nu\lambda_{m+1})^{1-\alpha}} \\
  = \left( 1 - \frac{\Exp^{-\alpha}}{1-\alpha}\right)  \frac{1}{(\nu \lambda_{K})^{1-\alpha}} + \frac{\Exp^{-\alpha}}{1-\alpha} \frac{1}{(\nu \lambda_{m+1})^{1-\alpha}} \\
  < \left( \frac{\Exp^{-\alpha}}{1-\alpha}\right) \frac{1}{(\nu \lambda_{m+1})^{1-\alpha}},
 \end{multline}
 where in the last inequality we used the fact that
 \be
  1 - \frac{\Exp^{-\alpha}}{1-\alpha} < 0, \quad \forall \alpha > 0.
 \ee

 Moreover, analogously to \eqref{opnormPKQmHFourier}-\eqref{intopnormPNQmHFourier}, one obtains that
 \be\label{intopnormPNQKH}
  \int_{t}^s  \| \nu^\alpha A^\alpha \Exp^{-(\xi - t)\nu A P_N Q_K }\|_{\mL(P_N Q_K H)} \rd \xi < \left(\frac{\Exp^{-\alpha}}{1-\alpha}\right) \frac{1}{(\nu \lambda_{K+1})^{1-\alpha}}.
 \ee

 Now, let us estimate the third term on the right-hand side of \eqref{ineqwDuhamelFourier}.

 Notice that
 \begin{multline}\label{estthirdterm0Fourier}
  \int_{t}^s \left| \Exp^{-(s-\tau) (\nu A P_N + \beta P_K)} P_N G(\tau) \right|_{L^2} \rd \tau = \\
  = \frac{1}{\nu} \int_{t}^s \left|\nu A \Exp^{-(s-\tau) (\nu A P_N + \beta P_K)} A^{-1} P_N G(\tau) \right|_{L^2} \rd \tau \leq \\
  \leq \frac{1}{\nu} \int_{t}^s \| \nu A \Exp^{-(s-\tau) (\nu A P_N + \beta P_K)}\|_{\mL(P_N H)} |A^{-1} P_N G (\tau)|_{L^2} \rd \tau.
 \end{multline}

 By Proposition \ref{lemboundG}, it follows that
 \be
  |A^{-1} P_N G|_{L^2} \leq c M_1 L_N \|\bq\|_{V'} + c L_N |\bq|_{L^2}^2 \leq cM_1 L_N \frac{|\bq|_{L^2}}{\lambda_{N+1}^{1/2}} + c L_N |\bq|_{L^2}^2 .
 \ee
 Then, using \eqref{boundqL2}, yields
 \be\label{boundA-1PNGFourier}
  |A^{-1} P_N G|_{L^2} \leq c C_N,
 \ee
 where
 \be\label{defCNFourier}
  C_N = C_0 \frac{L_N^2}{\lambda_{N+1}^{3/2}} \left( M_1 + C_0  \frac{L_N}{\lambda_{N+1}^{1/2}} \right),
 \ee
 with $C_0$ as defined in \eqref{defC0}.

 Now, similarly as in \eqref{opnormPmHFourier}-\eqref{intopnormPmHFourier}, one obtains that
 \begin{multline}\label{intopnormPNHFourier}
  \int_{t}^s \|\nu A \Exp^{-(s - \tau)(\nu A P_N + \beta P_K)}\|_{\mL(P_N H)} \rd \tau \\
  = \int_{t}^s \|\nu A \Exp^{-(\xi-t)(\nu A P_N + \beta P_K)}\|_{\mL(P_N H)} \rd \xi \\
  \leq \frac{\nu \lambda_N}{\nu \lambda_N + \beta} (1 - \Exp^{-1}\Exp^{-\frac{\beta}{\nu \lambda_N}}) + \log\left( \frac{\lambda_N}{\lambda_1}\right) \Exp^{-1}\Exp^{-\frac{\beta}{\nu \lambda_N}} + \frac{\nu \lambda_1}{\nu \lambda_1 + \beta} \Exp^{-1}\Exp^{-\frac{\beta}{\nu \lambda_1}} \\
  \leq 1 + \log \left( \frac{\lambda_N}{\lambda_1} \right) = L_N^2.
 \end{multline}

 Hence, from \eqref{estthirdterm0Fourier}, \eqref{boundA-1PNGFourier} and \eqref{intopnormPNHFourier}, we have
 \be\label{estthirdtermFourier}
  \int_{t}^s \left| \Exp^{-(s-\tau) (\nu A P_N + \beta P_K)} P_N G(\tau) \right|_{L^2} \rd \tau \leq c \frac{C_N L_N^2}{\nu},
 \ee
 with $C_N$ as defined in \eqref{defCNFourier}.

 Now, plugging estimates \eqref{estinitcondFourier}, \eqref{estdiffnonlineartermsFourier}, \eqref{intopnormPmHFourier}, \eqref{intopnormPNQmHFourier}, \eqref{intopnormPNQKH} and \eqref{estthirdtermFourier} into \eqref{ineqwDuhamelFourier}, we obtain that, for all $s \geq t \geq t_0$,
 \begin{multline}\label{estwL2a}
  |\bw(s)|_{L^2} \leq 2 \Exp^{-(s-t)\beta} |\bw(t)|_{L^2} + \\
  + 3 c_\alpha \frac{|\Omega|^{\alpha - \frac{1}{2}} M_1}{\nu^\alpha} \left( \sup_{t \leq \tau \leq s} |\bw(\tau)|_{L^2}\right) \left[ \frac{(\nu \lambda_m)^\alpha}{\beta} + \frac{\Exp^{-\alpha}}{1-\alpha} \frac{1}{(\nu \lambda_{m+1})^{1-\alpha}} + \right. \\
  \left. + \frac{\Exp^{-\alpha}}{1-\alpha} \frac{1}{(\nu \lambda_{K+1})^{1-\alpha}} \right]
  + c\frac{C_N L_N^2}{\nu}.
 \end{multline}

 Since $K \geq m$ (cf. \eqref{ineqKgeqm}), we have that
 \be
  \frac{(\nu \lambda_m)^\alpha}{\beta} + \frac{\Exp^{-\alpha}}{1-\alpha} \frac{1}{(\nu \lambda_{m+1})^{1-\alpha}} + \frac{\Exp^{-\alpha}}{1-\alpha} \frac{1}{(\nu \lambda_{K+1})^{1-\alpha}} \leq \left( 1 + 2 \frac{\Exp^{-\alpha}}{1-\alpha} \right)\frac{1}{(\nu \lambda_m)^{1-\alpha}}
 \ee

 Hence, from \eqref{estwL2a}, we obtain that
 \begin{multline}\label{estwL2b}
  |\bw(s)|_{L^2} \leq 2 \Exp^{-(s-t)\beta} |\bw(t)|_{L^2} + \\
  + c c_\alpha \left( 1 + \frac{\Exp^{-\alpha}}{1-\alpha} \right) \frac{|\Omega|^{\alpha - \frac{1}{2}} M_1}{\nu \lambda_m^{1-\alpha}} \sup_{t \leq \tau \leq s} |\bw(\tau)|_{L^2} + c\frac{C_N L_N^2}{\nu}.
 \end{multline}

 Let
 \be\label{defgamma}
  \gamma = c c_\alpha \left( 1 + \frac{\Exp^{-\alpha}}{1-\alpha} \right) \frac{|\Omega|^{\alpha - \frac{1}{2}} M_1}{\nu \lambda_m^{1-\alpha}}
 \ee
and
\[
 \theta = 2 \left( \Exp^{-\frac{\beta}{\nu \lambda_1}} + \frac{\gamma}{1 - \gamma} \right).
\]

Using hypothesis \eqref{condmFourier2} with a suitable absolute constant $c$ and also hypothesis \eqref{condbetaFourier2}, we obtain that $\gamma < 1$ and  $\theta < 1$. Therefore, \eqref{estsupwL2} follows from \eqref{estwL2b} and Lemma \ref{lemesty} with $y = |\bw(\cdot)|_{L^2}$, $a=2$, $b = \beta$, $\gamma$ given in \eqref{defgamma} and $\varepsilon = c C_N L_N^2/\nu$.
\end{proof}

\begin{rmk}
 We notice that, by using an explicit form of the constant $c_\alpha$ from \eqref{ineqA-alphadiffBuBv} (see, e.g., \cite{BartuccelliGibbon2011,Lieb1983,Mizohata1973}), one could obtain an optimal choice of $\alpha$ by minimizing the coefficient of $\sup_{t \leq \tau \leq s} |\bw(\tau)|_{L^2}$ in \eqref{estwL2b} with respect to $\alpha$. Thus, in this case, the values of $\gamma$, $\theta$, and the condition \eqref{condmFourier2} on $\lambda_m$ would be given explicitly in terms of this optimal value of $\alpha$. However, we chose not to deal with these technical details here.
\end{rmk}

With the result of Theorem \ref{thmestvNPNu}, we can obtain an estimate for the error committed when applying the standard Galerkin method to \eqref{DataAssAlg} in order to obtain an approximation of the reference solution $\bu$ of \eqref{NSEeqs}. The proof follows as in \eqref{ordererrorSGML2}-\eqref{ordererrorSGMH1}.

\begin{cor}\label{corerrorSGM}
 Assume the hypotheses from Theorem \ref{thmestvNPNu}. Then, there exists $T = T(\nu,\lambda_1,|\bg|_{L^2},N) \geq t_0$ such that, for every $N \geq K$,
 \be
  \sup_{t\geq T} |\bv_N(t) - \bu(t)|_{L^2} \leq C\frac{L_N}{\lambda_{N+1}},
 \ee
 and
 \be
  \sup_{t \geq T} \|\bv_N(t) - \bu(t)\|_{H^1} \leq C\frac{L_N}{\lambda_{N+1}^{1/2}},
 \ee
 where $C$ is a constant depending on $\nu$, $\lambda_1$ and $|\bg|_{L^2}$, but independent of $N$.
\end{cor}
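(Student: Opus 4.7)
The plan is to execute the triangle inequality decomposition already outlined in \eqref{ordererrorSGML2}-\eqref{ordererrorSGMH1} of the introduction, using the notation $\bp = P_N \bu$, $\bq = Q_N \bu$. Write
\[
\bv_N - \bu = (\bv_N - \bp) - \bq,
\]
so that $|\bv_N - \bu|_{L^2} \leq |\bv_N - \bp|_{L^2} + |\bq|_{L^2}$ and, analogously, $\|\bv_N - \bu\|_{H^1} \leq \|\bv_N - \bp\|_{H^1} + \|\bq\|_{H^1}$.

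For the low-mode difference in $L^2$, apply Theorem \ref{thmestvNPNu} to obtain
\[
|\bv_N(t) - \bp(t)|_{L^2} \leq c\, \theta^{(t-t_0)\nu \lambda_1 - 1} |\bv_N(t_0) - \bp(t_0)|_{L^2} + C \frac{L_N^4}{\lambda_{N+1}^{3/2}},
\]
with $\theta \in [0,1)$. Since $|\bv_N(t_0) - \bp(t_0)|_{L^2}$ is bounded by $2M_1 \lambda_1^{-1/2}$ (using $\bv_0, \bu(t_0) \in B_V(M_1)$ and Poincaré), choose $T_1 = T_1(\nu, \lambda_1, |\bg|_{L^2}, N) \geq t_0$ large enough so that the transient term is bounded by $L_N/\lambda_{N+1}$. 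For $t \geq T_1$, since $L_N^4 \lambda_{N+1}^{-3/2} \leq C L_N \lambda_{N+1}^{-1}$ for $N \geq K$ sufficiently large (absorbed into $C$ and the threshold for $T_1$), we get $|\bv_N(t) - \bp(t)|_{L^2} \leq C L_N / \lambda_{N+1}$. Combining with \eqref{boundqL2} from Theorem \ref{unifboundsQm} then yields the first estimate.

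For the $H^1$ estimate, apply the inverse Poincaré inequality \eqref{inversePoincarevNPNu} to the low-mode difference:
\[
\|\bv_N(t) - \bp(t)\|_{H^1} \leq \lambda_N^{1/2} |\bv_N(t) - \bp(t)|_{L^2} \leq C \frac{L_N}{\lambda_{N+1}^{1/2}},
\]
where the last step uses $\lambda_N \leq \lambda_{N+1}$ and the $L^2$ estimate from the previous paragraph. Combining with the $V$-bound \eqref{boundqH1} on $\bq$ from Theorem \ref{unifboundsQm} gives the second estimate, after possibly enlarging $T_1$ to some $T \geq T_1$ to ensure all uniform-in-time bounds from Proposition \ref{propunifbounds} and Theorem \ref{unifboundsQm} are in force (this is automatic by Remark \ref{rmkunifboundsu}).

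There is no real obstacle: the only subtle point is that the time $T$ depends on $N$ through the transient $\theta^{(t-t_0)\nu\lambda_1 - 1} |\bv_N(t_0) - \bp(t_0)|_{L^2}$, which must be driven below the target threshold $L_N/\lambda_{N+1}$; since $\theta < 1$ is independent of $N$, this requires $T - t_0$ to grow at most like $\log(\lambda_{N+1}/L_N)$, which is harmless and absorbed into the dependence of $T$ on $N$ stated in the corollary.
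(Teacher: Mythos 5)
Your proposal is correct and follows essentially the same route as the paper, which simply invokes the triangle inequality decomposition \eqref{ordererrorSGML2}--\eqref{ordererrorSGMH1} together with Theorem \ref{thmestvNPNu}, the inverse Poincar\'e inequality \eqref{inversePoincarevNPNu}, and the bounds \eqref{boundqL2}--\eqref{boundqH1} on $\bq$. The only cosmetic remark is that $L_N^3/\lambda_{N+1}^{1/2}$ is bounded uniformly in $N$ (by a constant depending only on $\lambda_1$), so the comparison $L_N^4\lambda_{N+1}^{-3/2}\leq C L_N\lambda_{N+1}^{-1}$ holds for all admissible $N$ rather than only for $N$ large.
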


Finally, we now state the result about the error associated with the Postprocessing Galerkin method applied to \eqref{eqDataAssFourier}, relative to the reference solution $\bu$. Compared to the result from Corollary \ref{corerrorSGM}, the estimates show that the Postprocessing Galerkin method has a better convergence rate than the standard Galerkin method. The proof follows immediately from the result of Theorem \ref{thmestvNPNu} and \eqref{distuPhi1L2b}-\eqref{inversePoincarevNPNu}.

\begin{thm}\label{thmerrorPPGM}
Assume the hypotheses from Theorem \ref{thmestvNPNu}, with $\bu$ satisfying, in addition, \eqref{distuPhi1L2aa} and \eqref{distuPhi1H1aa}, for every $t \geq t_0$. Then, there exists $T = T(\nu,\lambda_1,|\bg|_{L^2},N) \geq t_0$ such that, for every $N \geq K$,
\be\label{esterrorPPGML2}
 \sup_{t \geq T}|[\bv_N(t) + \Phi_1(\bv_N(t))] - \bu(t)|_{L^2} \leq C \frac{L_N^4}{\lambda_{N+1}^{3/2}},
\ee
and
\be\label{esterrorPPGMH1}
 \sup_{t \geq T}\|[\bv_N(t) + \Phi_1(\bv_N(t))] - \bu(t)\|_{H^1} \leq C \frac{L_N^4}{\lambda_{N+1}},
\ee
where $C$ is a constant depending on $\nu$, $\lambda_1$ and $|\bg|_{L^2}$, but independent of $N$.
\end{thm}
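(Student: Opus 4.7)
The plan is to directly assemble the estimate from the three identified ingredients, exactly along the lines sketched by the authors in the paragraphs leading up to \eqref{distuPhi1L2b}, \eqref{distuPhi1H1b} and \eqref{inversePoincarevNPNu}. Starting from the algebraic decomposition
\[
 (\bv_N + \Phi_1(\bv_N)) - \bu = (\bv_N - \bp) + (\Phi_1(\bp) - \bq) + (\Phi_1(\bv_N) - \Phi_1(\bp)),
\]
I would apply Theorem \ref{thmdistuPhi1} to control the middle term by $C L_N/\lambda_{N+1}^{3/2}$ in $H$ and $C L_N/\lambda_{N+1}$ in $V$. Proposition \ref{thmUnifBoundvNH1Fourier} ensures that both $\bv_N(t)$ and $\bp(t)$ stay in $B_V(3M_1)$ for every $t\geq t_0$, so the Lipschitz estimates \eqref{Phi1LipschitzL2}--\eqref{Phi1LipschitzH1} apply with constant $l = C\lambda_{N+1}^{-1/4}$ depending only on $\nu,\lambda_1,|\bg|_{L^2}$. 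This yields \eqref{distuPhi1L2b} and \eqref{distuPhi1H1b}, reducing both estimates to a bound on $|\bv_N(t) - \bp(t)|_{L^2}$ (resp.\ $\|\bv_N(t) - \bp(t)\|_{H^1}$).

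For the $L^2$-bound, I would invoke Theorem \ref{thmestvNPNu}, which gives
\[
 |\bv_N(t) - \bp(t)|_{L^2} \leq c\,\theta^{(t-t_0)\nu\lambda_1 - 1} |\bv_N(t_0) - \bp(t_0)|_{L^2} + C\frac{L_N^4}{\lambda_{N+1}^{3/2}},
\]
with $\theta \in [0,1)$. Since $\bv_N(t_0)=P_N\bv_0$ with $\bv_0 \in B_V(M_1)$ and $\bp(t_0)\in B_V(M_1)$, the initial difference $|\bv_N(t_0)-\bp(t_0)|_{L^2}$ is bounded by $2M_1/\lambda_1^{1/2}$, a quantity depending only on $\nu,\lambda_1,|\bg|_{L^2}$. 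Choosing $T=T(\nu,\lambda_1,|\bg|_{L^2},N)\geq t_0$ large enough so that the transient term $c\,\theta^{(T-t_0)\nu\lambda_1-1}|\bv_N(t_0)-\bp(t_0)|_{L^2}$ is dominated by $C L_N^4/\lambda_{N+1}^{3/2}$, I obtain
\[
 \sup_{t\geq T} |\bv_N(t)-\bp(t)|_{L^2} \leq C\frac{L_N^4}{\lambda_{N+1}^{3/2}}.
\]
Substituting this into \eqref{distuPhi1L2b} and noting that $(1+l)$ is uniformly bounded in $N$ gives \eqref{esterrorPPGML2}.

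For the $H^1$-bound, I would apply the inverse Poincaré inequality \eqref{inversePoincarevNPNu} together with the $L^2$-bound just derived, obtaining
\[
 \sup_{t\geq T} \|\bv_N(t)-\bp(t)\|_{H^1} \leq \lambda_N^{1/2}\sup_{t\geq T} |\bv_N(t)-\bp(t)|_{L^2} \leq C\frac{L_N^4}{\lambda_{N+1}^{1/2}}.
\]
Plugging this into \eqref{distuPhi1H1b} and again absorbing $(1+l)$ into the constant $C$ delivers \eqref{esterrorPPGMH1}. There is essentially no obstacle here, since all the heavy lifting—the uniform-in-time control of $\bv_N$ in $V$ and the sharp $L^2$-estimate on $\bv_N-\bp$—has already been carried out in Proposition \ref{thmUnifBoundvNH1Fourier} and Theorem \ref{thmestvNPNu}; the only mild bookkeeping is to verify that the new $T$ can be chosen to depend only on $\nu,\lambda_1,|\bg|_{L^2}$ and $N$ (through the decay rate $\theta$ and through the required threshold past which the transient term is subordinate), which follows because $\theta$ depends only on $\nu,\lambda_1,|\bg|_{L^2}$ and all remaining constants are independent of $N$.
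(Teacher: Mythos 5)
Your proposal is correct and follows exactly the route the paper takes: the paper's own proof is just the one-line remark that the theorem follows from Theorem \ref{thmestvNPNu} together with \eqref{distuPhi1L2b}--\eqref{inversePoincarevNPNu}, which is precisely the assembly you carry out (decomposition \eqref{rewritingerror}, Theorem \ref{thmdistuPhi1} for $\Phi_1(\bp)-\bq$, the Lipschitz bounds on $\Phi_1$ over $P_NB_V(3M_1)$ justified by Proposition \ref{thmUnifBoundvNH1Fourier}, and the choice of $T$ absorbing the transient term from Theorem \ref{thmestvNPNu}). The only blemish is an exponent slip in your $H^1$ step: $\lambda_N^{1/2}\cdot L_N^4\,\lambda_{N+1}^{-3/2}\leq L_N^4\,\lambda_{N+1}^{-1}$, not $L_N^4\,\lambda_{N+1}^{-1/2}$ --- with the weaker bound as written, \eqref{distuPhi1H1b} would only yield the rate $\lambda_{N+1}^{-1/2}$ instead of the claimed $\lambda_{N+1}^{-1}$ in \eqref{esterrorPPGMH1}, but the corrected arithmetic (using $\lambda_N\leq\lambda_{N+1}$) delivers the stated estimate.
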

%\begin{proof}
%From Theorem \ref{thmestvNPNu}, we see that we can choose $T_0 = T_0(\nu,\lambda_1,|\bg|_{L^2},N) \geq 0$ large enough such that
%\be\label{ineqerrorvNp2}
%  \sup_{t \geq T_0}|\bv_N(t) - \bp(t)|_{L^2} \leq  C \frac{L_N^4}{\lambda_{N+1}^{3/2}}.
%\ee
%
%Now, \eqref{esterrorPPGML2} follows from \eqref{ineqerrorvNp2} and \eqref{distuPhi1L2b}, and \eqref{esterrorPPGMH1} follows from \eqref{distuPhi1L2b}, %\eqref{inversePoincarevNPNu} and \eqref{ineqerrorvNp2}.
%\end{proof}

%\subsection{The general interpolant case}
\subsection{A general class of interpolant operators}\label{subsecAnotherClassIntOp}

We now consider the class of linear interpolant operators $I_h : L^2(\Omega)^2 \to L^2(\Omega)^2$ satisfying the following properties:

\renewcommand{\theenumi}{{P}\arabic{enumi}}
\begin{enumerate}
 \item\label{propintP1} There exists a positive constant $c_0$ such that
 \be\label{eqpropintP1}
 |\varphi - I_h(\varphi)|_{L^2} \leq c_0 h \|\varphi\|_{H^1}, \quad \forall \varphi \in H^1(\Omega)^2.
\ee
 \item\label{propintP2} There exists a positive constant $c_{-1}$ such that
 \be\label{eqpropintP2}
 \|\varphi - I_h(\varphi)\|_{H^{-1}} \leq c_{-1} h |\varphi|_{L^2}, \quad \forall \varphi \in L^2(\Omega)^2.
\ee
 \item\label{propintP3} There exists a positive constant $\widetilde{c_0}$ such that
 \be\label{ineqpropintP3}
   |I_h(\bq)|_{L^2} \leq \widetilde{c_0} \frac{|\Omega|^{3/4}}{h^2 \lambda_{N+1}^{1/4}} |\bq|_{L^2}, \quad \forall \bq \in Q_N H.
 \ee
\end{enumerate}

As one easily verifies, the example of interpolant operator given by the low Fourier modes projector $P_K$, $N \geq K$, considered in subsection \ref{subsecFouriermodescase}, satisfies properties \eqref{propintP1}-\eqref{propintP3}. In particular, property \eqref{propintP3} is immediately verified, since $I_h(\bq) = P_K \bq = 0$. Indeed, the only reason for assuming property \eqref{propintP3} is that, as will be clearer in the proof of Theorem \ref{thmestvNPNuIh}, we do not assume $P_\sigma I_h$ to commute with $A$, a property that $P_K$ satisfies. This is the key difference between the proofs of Theorems \ref{thmestvNPNu} and \ref{thmestvNPNuIh}.

A more physically interesting example of operator $I_h$ satisfying properties \eqref{propintP1}-\eqref{propintP3} is given by local averages over finite volume elements. For illustrational purposes, this is proved in the Appendix.

The next results follow a similar outline from the ones in subsection \ref{subsecFouriermodescase}. We again assume either periodic or no-slip Dirichlet boundary conditions. As before, we start by obtaining a uniform estimate of the $V$ norm of $\bv_N - \bp$.

\begin{prop}\label{thmUnifBoundvNH1}
 Let $\bu$ be a solution of \eqref{NSEeqs} satisfying \eqref{unifboundsattractor}-\eqref{boundqH1}, for every $t \geq t_0$. Let $\bv_0 \in B_V(M_1)$, with $M_1$ as in \eqref{unifboundsattractor}. For every $N \in \mathbb{N}$, let $\bv_N$ be the unique solution of \eqref{eqDataAssFourierGalerkin} satisfying $\bv_N(t_0) = P_N \bv_0$.
 Consider $m \in \mathbb{N}$ large enough such that
 \be\label{condm}
  \lambda_m \geq \max \left\{ \frac{\lambda_1 \Exp}{2}, c \frac{C_1}{\nu}L_m^2, c \left( \frac{C_1^2}{\nu M_1}\right)^{2/3} L_m^2, c \left(\frac{C_1}{M_1}\right)^2 L_m^2 \right\}.
 \ee
 If $\beta > 0$ is large enough such that
 \be\label{condbeta}
  \beta \geq \max\left\{ \nu \lambda_m, c \frac{M_1^2}{\nu}\left[ 1 + \log \left(\frac{M_1}{\nu\lambda_1^{1/2}}\right)\right] \right\}
 \ee
 and if $h$ is small enough such that
 \be\label{condh}
  h \leq \frac{1}{c_0} \left( \frac{\nu}{\beta}\right)^{1/2},
 \ee
 where $c_0$ is the constant from \eqref{eqpropintP1}, then, for every $N \geq m$, we have
 \be\label{unifboundvN}
  \sup_{t \geq t_0} \|\bv_N(t) - \bp(t)\|_{H^1} \leq 2 M_1.
 \ee
\end{prop}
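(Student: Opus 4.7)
The plan is to mirror the proof of Proposition \ref{thmUnifBoundvNH1Fourier}, adapting only the treatment of the feedback term to account for the fact that $I_h$ no longer commutes with $P_N$ (nor with $A$). Setting $\bw = \bv_N - \bp$, I would project \eqref{functionaleq} onto $P_N H$, subtract it from \eqref{DataAssAlgFuncGalerkin}, and expand the nonlinear difference as
\[
B(\bv_N,\bv_N) - B(\bu,\bu) = B(\bw,\bp) + B(\bp,\bw) + B(\bw,\bw) - B(\bp,\bq) - B(\bq,\bp) - B(\bq,\bq),
\]
obtaining an equation for $\bw$ whose right-hand side differs from \eqref{eqw2Fourier} only in that $-\beta P_K \bw$ is replaced by $-\beta P_N P_\sigma I_h(\bw - \bq)$.

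Taking the $L^2$ inner product with $A\bw$, the six nonlinear contributions are controlled exactly as in \eqref{est1Fourier}--\eqref{est6Fourier}, using \eqref{ineqConseqBrezisGallouet}, \eqref{ineqTiti1}, \eqref{ineqTiti2} together with \eqref{unifboundsattractor} and \eqref{boundqH1}. The novelty lies in the feedback contribution. Since $A\bw \in P_N H$ and $\bq \in Q_N H$, the orthogonality $(\bq,A\bw)_{L^2}=0$ is exact, which permits the decomposition
\[
-\beta\bigl(P_N P_\sigma I_h(\bw-\bq),A\bw\bigr)_{L^2} = -\beta\|\bw\|_{H^1}^2 - \beta(I_h(\bw)-\bw,A\bw)_{L^2} + \beta(I_h(\bq)-\bq,A\bw)_{L^2}.
\]
I would then invoke property \eqref{propintP1} on both remainders. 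The first gives $|\beta(I_h(\bw)-\bw,A\bw)_{L^2}| \leq \beta c_0 h\|\bw\|_{H^1}|A\bw|_{L^2}$, which by Young's inequality and hypothesis \eqref{condh} is absorbed into $\tfrac{\beta}{2}\|\bw\|_{H^1}^2 + \tfrac{\nu}{4}|A\bw|_{L^2}^2$, leaving $-\tfrac{\beta}{2}\|\bw\|_{H^1}^2$ as net dissipation. The second gives, via \eqref{boundqH1} and \eqref{condh}, a forcing-type bound of order $\tfrac{\nu}{12}|A\bw|_{L^2}^2 + c\beta C_1^2 L_N^2 \lambda_{N+1}^{-1}$.

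Collecting these estimates produces an enstrophy inequality of the same shape as \eqref{enstrophyineqw1Fourier}, but with the additional forcing $c\beta C_1^2 L_N^2\lambda_{N+1}^{-1}$. The logarithmic absorption via the auxiliary function $\phi$ from \eqref{defphiFourier}--\eqref{defrhoBFourier}, which relies only on \eqref{condbeta}, then applies verbatim and yields a differential inequality of the form
\[
\frac{\rd}{\rd t}\|\bw\|_{H^1}^2 + \frac{\nu}{4}|A\bw|_{L^2}^2 + \frac{\beta}{4}\|\bw\|_{H^1}^2 \leq c\frac{C_1^4 L_N^6}{\nu\lambda_{N+1}^2} + c\frac{C_1^2 M_1^2 L_N^4}{\nu\lambda_{N+1}} + c\beta\frac{C_1^2 L_N^2}{\lambda_{N+1}}.
\]
Dividing by $\beta$ and invoking \eqref{condbeta}, the first two right-hand terms are controlled as in the Fourier case by the first three lower bounds in \eqref{condm}, while the genuinely new third term is controlled by $M_1^2$ precisely because of the extra requirement $\lambda_m \geq c(C_1/M_1)^2 L_m^2$ appearing in \eqref{condm}. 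Finally, the continuity-bootstrap argument from \eqref{defttildeFourier} onward applies unchanged: on the maximal interval where $\|\bw\|_{H^1} \leq 3M_1$ the inequality above forces $\|\bw\|_{H^1} \leq 2M_1$, so by continuity of $\bw$ in $V$ this interval must be all of $[t_0,\infty)$, yielding \eqref{unifboundvN}.

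The main obstacle is the quantitative balancing of the non-commutativity error $-\beta(I_h(\bw)-\bw,A\bw)_{L^2}$ against the stabilizing term $-\beta\|\bw\|_{H^1}^2$ and the viscous dissipation $\nu|A\bw|_{L^2}^2$: without the precise scaling of $h$ with $\nu/\beta$ prescribed by \eqref{condh}, this perturbation would destroy the nudging-induced dissipation, and without the extra lower bound on $\lambda_m$ in \eqref{condm} the forcing produced by $\beta(I_h(\bq)-\bq,A\bw)_{L^2}$ could not be reabsorbed into $M_1^2$ at the final step.
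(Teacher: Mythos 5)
Your proposal is correct and follows essentially the same route as the paper: the same decomposition of the feedback term into $-\beta\|\bw\|_{H^1}^2$ plus the two interpolation remainders $\beta(\bw-I_h(\bw),A\bw)_{L^2}$ and $\beta(\bq-I_h(\bq),A\bw)_{L^2}$ (using $P_N\bq=0$), the same use of property \eqref{eqpropintP1} with \eqref{condh} to absorb the first remainder and to turn the second into the forcing term $c\beta C_1^2 L_N^2/\lambda_{N+1}$, and the same identification of the extra condition $\lambda_m \geq c(C_1/M_1)^2L_m^2$ in \eqref{condm} as what reabsorbs that new term after dividing by $\beta$. Only immaterial constant bookkeeping in the Young splittings differs from the paper's \eqref{est7}--\eqref{est8}.
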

\begin{proof}
 Denote $\bw = \bv_N - \bp$. Subtracting \eqref{eqNSEprojPN} from \eqref{DataAssAlgFuncGalerkin}, we obtain that
 \be\label{eqw}
  \frac{\rd \bw}{\rd t} + \nu A \bw + P_N [ B(\bv_N,\bv_N) - B(\bu,\bu)] = -\beta P_N I_h(\bv_N - \bu).
 \ee
 As in \eqref{rewritenonlinearterms}, we rewrite
 \begin{multline}
  B(\bv_N,\bv_N) - B(\bu,\bu) = \\
  = B(\bw, \bp) + B(\bp,\bw) + B(\bw,\bw) - B(\bp, \bq) - B(\bq,\bp) - B(\bq,\bq).
 \end{multline}

Moreover,
 \begin{eqnarray}\label{rewritefeedback}
  -\beta P_N I_h (\bv_N - \bu) &=& -\beta P_N I_h (\bw) + \beta P_N I_h (\bq) \nonumber \\
  &=& -\beta P_N I_h (\bw) + \beta P_N [I_h (\bq) - \bq].
 \end{eqnarray}

Thus, from \eqref{eqw}-\eqref{rewritefeedback}, we have
 \begin{multline}\label{eqw2}
   \frac{\rd \bw}{\rd t} + \nu A \bw = -P_N[ B(\bw, \bp) + B(\bp,\bw) + B(\bw,\bw) - B(\bp, \bq) - B(\bq,\bp) - B(\bq,\bq)] \\
   -\beta P_N I_h (\bw) + \beta P_N [I_h (\bq) - \bq].
 \end{multline}

Taking the inner product in $L^2$ of \eqref{eqw2} with $A\bw$, yields
 \begin{multline}\label{enstrophyeqw}
  \frac{1}{2} \frac{\rd}{\rd t} \|\bw\|_{H^1}^2 + \nu |A\bw|_{L^2}^2 = -(B(\bw, \bp),A\bw)_{L^2} - (B(\bp,\bw),A\bw)_{L^2} - (B(\bw,\bw),A\bw)_{L^2}\\
  + (B(\bp,\bq),A\bw)_{L^2} + (B(\bq,\bp),A\bw)_{L^2} + (B(\bq,\bq),A\bw)_{L^2} \\
  - \beta\|\bw\|_{H^1}^2 + \beta (\bw - I_h (\bw),A\bw)_{L^2} - \beta (\bq - I_h(\bq), A\bw)_{L^2}.
 \end{multline}

 Using property \eqref{propintP1} of $I_h$, we have
 \begin{eqnarray}\label{est7}
  |\beta(\bw - I_h \bw, A\bw)_{L^2}| &\leq& c_0 \beta h \|\bw\|_{H^1} |A\bw|_{L^2} \nonumber \\
  &\leq& \frac{\beta}{2} \|\bw\|_{H^1}^2 + \frac{c_0^2 \beta h^2}{2} |A\bw|_{L^2}^2 \nonumber \\
  &\leq& \frac{\beta}{2} \|\bw\|_{H^1}^2 + \frac{\nu}{2} |A\bw|_{L^2}^2,
 \end{eqnarray}
 where in the last inequality we used hypothesis \eqref{condh}.

 Now, using property \eqref{propintP1} of $I_h$ and \eqref{boundqH1}, we have
 \begin{eqnarray}\label{est8}
  |\beta (\bq - I_h(\bq), A\bw)_{L^2}| &\leq& c_0 \beta h \|\bq\|_{H^1} |A\bw|_{L^2} \nonumber\\
  &\leq& \beta \|\bq\|_{H^1}^2 + \frac{c_0^2 \beta h^2}{4} |A\bw|_{L^2}^2 \nonumber \\
  &\leq& \beta C_1^2 \frac{L_N^2}{\lambda_{N+1}} + \frac{\nu}{4} |A\bw|_{L^2}^2.
 \end{eqnarray}

 Using, in \eqref{enstrophyeqw}, estimates \eqref{est7}, \eqref{est8} and analogous estimates to \eqref{est1Fourier}-\eqref{est6Fourier}, we obtain that
 \begin{multline}\label{enstrophyineqw1}
  \frac{\rd }{\rd t}\|\bw\|_{H^1}^2 + \frac{\nu}{4} |A\bw|_{L^2}^2 \leq -\beta \|\bw\|_{H^1}^2 \\
  + c M_1 \|\bw\|_{H^1} |A \bw|_{L^2} \left[ 1 + \log\left( \frac{|A \bw|_{L^2}}{\lambda_1^{1/2}\|\bw\|_{H^1}}\right) \right]^{1/2}\\
  + c \|\bw\|_{H^1}^2 |A \bw|_{L^2} \left[ 1 + \log\left( \frac{|A \bw|_{L^2}}{\lambda_1^{1/2}\|\bw\|_{H^1}}\right) \right]^{1/2}
  + c \frac{C_1^4}{\nu} \frac{L_N^6}{\lambda_{N+1}^2} \\
  + c \frac{C_1^2}{\nu} \frac{L_N^4}{\lambda_{N+1}}M_1^2
  + \beta C_1^2 \frac{L_N^2}{\lambda_{N+1}}.
 \end{multline}

 Proceeding analogously as in the proof of Proposition \ref{thmUnifBoundvNH1Fourier}, we obtain that
 \be\label{enstrophyineqw2}
  \frac{\rd }{\rd t}\|\bw\|_{H^1}^2 \leq -\frac{\beta}{2} \|\bw\|_{H^1}^2 + c \frac{C_1^4}{\nu} \frac{L_N^6}{\lambda_{N+1}^2} + c \frac{C_1^2}{\nu} \frac{L_N^4}{\lambda_{N+1}}M_1^2 + \beta C_1^2 \frac{L_N^2}{\lambda_{N+1}},
 \ee
 for all $t \in [t_0, \tilde{t}]$, with $\tilde{t}$ defined as in \eqref{defttildeFourier}.

 Integrating \eqref{enstrophyineqw2} with respect to time from $t_0$ to $t \in [t_0, \tilde{t}]$, we have
 \begin{multline}\label{ineqwH1}
   \|\bw(t)\|_{H^1}^2 \leq \|\bw(t_0)\|_{H^1}^2 \Exp^{-\frac{\beta}{2}(t-t_0)} \\
  + \frac{c}{\beta} \left[ \frac{C_1^4}{\nu} \frac{L_N^6}{\lambda_{N+1}^2} + \frac{C_1^2}{\nu} \frac{L_N^4}{\lambda_{N+1}}M_1^2 + \beta C_1^2 \frac{L_N^2}{\lambda_{N+1}} \right] (1 - \Exp^{-\frac{\beta}{2}(t-t_0)}).
 \end{multline}

 Using hypothesis \eqref{condbeta} with a suitable absolute constant $c$ and similar arguments to the ones used in the proof of Proposition \ref{thmUnifBoundvNH1Fourier}, one obtains that
 \begin{multline}\label{boundresidualterms1}
  \frac{c}{\beta} \left[ \frac{C_1^4}{\nu} \frac{L_N^6}{\lambda_{N+1}^2} + \frac{C_1^2}{\nu} \frac{L_N^4}{\lambda_{N+1}}M_1^2 + \beta C_1^2 \frac{L_N^2}{\lambda_{N+1}}  \right] \leq \\
  \leq \frac{c}{\beta} \left[ \frac{C_1^4}{\nu} \frac{L_m^6}{\lambda_m^2} + \frac{C_1^2}{\nu} \frac{L_m^4}{\lambda_m}M_1^2 + \beta C_1^2 \frac{L_m^2}{\lambda_m} \right] \leq 4 M_1^2.
 \end{multline}

 Using the fact that $\|\bw(t_0)\|_{H^1} \leq 2 M_1$ and inequality \eqref{boundresidualterms1} in \eqref{ineqwH1}, it follows that
 \[
  \|\bw(t)\|_{H^1} \leq 2 M_1, \quad \forall t \in [t_0, \tilde{t}],
 \]
which, by the definition of $\tilde{t}$ in \eqref{defttildeFourier} and the fact that $\bw \in \mC([t_0,\infty),V)$, actually implies that
\[
 \|\bw(t)\|_{H^1} \leq 2 M_1, \quad \forall t \geq t_0.
\]
\end{proof}

Using the results of Proposition \ref{thmUnifBoundvNH1}, Lemma \ref{lemesty} and Proposition \ref{lemboundG}, we can now obtain a uniform estimate in time of $|\bv_N - \bp|_{L^2}$.

\begin{thm}\label{thmestvNPNuIh}
 Let $\bu$ be a solution of \eqref{NSEeqs} satisfying \eqref{unifboundsattractor}-\eqref{boundqH1}, for every $t \geq t_0$. Let $\bv_0 \in B_V(M_1)$, with $M_1$ as in \eqref{unifboundsattractor}. For every $N \in \mathbb{N}$, let $\bv_N$ be the unique solution of \eqref{eqDataAssFourierGalerkin} satisfying $\bv_N(t_0) = P_N \bv_0$. Fix $\alpha \in (1/2,1)$ and consider $m \in \mathbb{N}$ large enough such that
 \begin{multline}\label{condm2}
  \lambda_m \geq \max \left\{ \frac{\lambda_1 \Exp}{2}, c \frac{C_1}{\nu}L_m^2, c \left( \frac{C_1^2}{\nu M_1}\right)^{2/3} L_m^2, c \left(\frac{C_1}{M_1}\right)^2 L_m^2,\right.\\
  \left. \left[ c c_\alpha \left( 1 + \frac{\Exp^{-\alpha}}{1-\alpha}  \right)\frac{|\Omega|^{\alpha - \frac{1}{2}} M_1}{\nu}\right]^{\frac{1}{1-\alpha}} \right\},
 \end{multline}
 where $c_\alpha$ is the constant from \eqref{ineqA-alphadiffBuBv}.

 If $\beta > 0$ is large enough such that
 \be\label{condbeta2}
  \beta \geq \max\left\{ \nu \lambda_m, c \frac{M_1^2}{\nu}\left[ 1 + \log \left(\frac{M_1}{\nu\lambda_1^{1/2}}\right)\right] \right\}
 \ee
 and if $h \geq 0$ is small enough such that
 \be\label{condh2}
   h \leq c \min \left\{ \left(\frac{\nu}{\beta}\right)^{1/2}, \frac{\nu \lambda_m^{1/2}}{\beta} \right\},
 \ee
 then, there exists $\theta = \theta(\beta) \in [0,1)$ and a constant $C = C(\nu, \lambda_1, |\bg|_{L^2},1/h^2)$ such that, for every $N \geq m$, we have
 \be\label{estwL2Ih}
  |\bv_N(t) -\bp(t)|_{L^2} \leq c \theta^{(t - t_0) \nu \lambda_1 -1} |\bv_N(t_0) -\bp(t_0)|_{L^2} + C \frac{L_N}{\lambda_{N+1}^{5/4}}.
 \ee
\end{thm}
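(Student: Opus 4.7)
The plan is to follow the same skeleton as Theorem \ref{thmestvNPNu}, introducing $\bw = \bv_N - \bp$ and subtracting \eqref{eqNSEprojPN} from \eqref{DataAssAlgFuncGalerkin} to obtain
$$\frac{\rd \bw}{\rd t} + (\nu A + \beta) P_N \bw = -P_N[B(\bv_N,\bv_N) - B(\bp,\bp)] + P_N G - \beta P_N P_\sigma[I_h(\bw) - \bw] + \beta P_N P_\sigma I_h(\bq),$$
with $G(t) = B(\bu,\bu) - B(\bp,\bp)$. The key structural difference from the Fourier case is that $P_\sigma I_h$ is no longer a spectral projector and in particular does not commute with $A$, so I absorb the entire feedback into a single semigroup generator $(\nu A + \beta) P_N$, moving the mismatch $P_\sigma I_h(\bw) - \bw$ to the right-hand side as a forcing. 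Apply Duhamel's formula and estimate the five resulting contributions in $L^2$.

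The initial contribution is bounded by $e^{-(s-t)\beta}|\bw(t)|_{L^2}$. For the nonlinear term, combine \eqref{ineqA-alphadiffBuBv} with Proposition \ref{thmUnifBoundvNH1} (which gives $\|\bv_N + \bp\|_{H^1} \leq 3 M_1$ under \eqref{condbeta2} and the first inequality in \eqref{condh2}) and split the semigroup as $(\nu A + \beta) P_m \oplus (\nu A + \beta) P_N Q_m$, bounding $\int_t^s \|\nu^\alpha A^\alpha \Exp^{-(\xi-t)(\nu A + \beta) P_N}\|_{\mL(P_N H)} \rd \xi$ by $C(\nu\lambda_m)^{\alpha-1}$ exactly as in \eqref{intopnormPmHFourier}--\eqref{intopnormPNQmHFourier}. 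Via \eqref{condm2}, this produces a contraction contribution $\gamma \sup_{t\le\tau\le s}|\bw(\tau)|_{L^2}$ with $\gamma<1$. For the $G$-term, use Proposition \ref{lemboundG} to get $|A^{-1} P_N G|_{L^2} \leq c C_N$ with $C_N$ as in \eqref{defCNFourier}, and the analogue of \eqref{intopnormPNHFourier} (noting that $\nu A + \beta$ in the exponent only improves the bound) yields a residual of order $c C_N L_N^2/\nu$, which is of order $L_N^4/\lambda_{N+1}^{3/2}$ and hence dominated by the target $L_N/\lambda_{N+1}^{5/4}$.

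The two genuinely new terms are where the interpolant properties enter. For the feedback mismatch, property \eqref{propintP2} gives $\|I_h(\bw) - \bw\|_{H^{-1}} \leq c_{-1} h |\bw|_{L^2}$, i.e.\ $|A^{-1/2} P_\sigma (I_h(\bw) - \bw)|_{L^2} \leq c\, c_{-1} h |\bw|_{L^2}$; pairing this with $\int_t^s \|\nu^{1/2} A^{1/2} \Exp^{-(\xi-t)(\nu A + \beta) P_N}\|_{\mL(P_N H)} \rd \xi \leq C/(\nu\lambda_m)^{1/2}$ (obtained by the same three-region argument as \eqref{opnormPmHFourier}) gives a coefficient of order $\beta h/(\nu\lambda_m)^{1/2}$ in front of $\sup_\tau|\bw(\tau)|_{L^2}$, which is absorbed into $\gamma$ thanks to the second inequality in \eqref{condh2}. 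For the $I_h(\bq)$ term, property \eqref{propintP3} combined with \eqref{boundqL2} yields $|P_N P_\sigma I_h(\bq)|_{L^2} \leq C L_N / (h^2 \lambda_{N+1}^{5/4})$; integrated against the semigroup, which contributes a factor $1/\beta$, this produces the final residual $\varepsilon$ of order $L_N / (h^2 \lambda_{N+1}^{5/4})$, matching the rate in \eqref{estwL2Ih}.

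Assembling these pieces gives an inequality $|\bw(s)|_{L^2} \leq a\Exp^{-(s-t)\beta}|\bw(t)|_{L^2} + \gamma \sup_{t \leq \tau \leq s}|\bw(\tau)|_{L^2} + \varepsilon$ with $\gamma<1$ and $\theta = a(\Exp^{-\beta/(\nu\lambda_1)} + \gamma/(1-\gamma))<1$, so Lemma \ref{lemesty} delivers \eqref{estwL2Ih} with the stated dependence $C = C(\nu,\lambda_1,|\bg|_{L^2},1/h^2)$. The main obstacle I expect is the feedback mismatch term $I_{\text{fb}}$: unlike the Fourier case where it simply vanishes, here one must buy back half a derivative using \eqref{propintP2} and pay for it with semigroup smoothing, and it is precisely the balance between these two factors that forces the additional constraint $h \lesssim \nu\lambda_m^{1/2}/\beta$ in \eqref{condh2} (stricter than the energy-level condition used in Proposition \ref{thmUnifBoundvNH1}) and limits the final rate to $\lambda_{N+1}^{-5/4}$ instead of $\lambda_{N+1}^{-3/2}$.
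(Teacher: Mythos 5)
Your proposal follows essentially the same route as the paper's proof: the same Duhamel formula with generator $\nu A P_N + \beta P_N$, the same five-term decomposition, the same semigroup splitting $P_m \oplus P_N Q_m$ with the $\alpha$-smoothing estimates, property \eqref{propintP2} paired with half a derivative of smoothing for the feedback mismatch (absorbed via the second constraint in \eqref{condh2}), property \eqref{propintP3} with \eqref{boundqL2} for the $I_h(\bq)$ term, and Lemma \ref{lemesty} to close. The argument is correct, and you correctly identify both the source of the extra condition on $h$ and the reason the rate drops to $\lambda_{N+1}^{-5/4}$.
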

\begin{proof}
 We recall equation \eqref{eqwintrosecresults} satisfied by $\bw = \bv_N - \bp$:
 \begin{multline}
  \frac{\rd \bw}{\rd t} + [\nu A P_N + \beta P_N] \bw = - P_N [ B(\bv_N,\bv_N) - B(\bp,\bp)] + P_N G \\
  - \beta P_N P_\sigma [I_h(\bw) -\bw] + \beta P_N P_\sigma I_h(\bq),
 \end{multline}
 where
 \be
  G(t) = B(\bu(t),\bu(t)) - B(\bp(t),\bp(t)), \quad \forall t \geq t_0.
 \ee

 Using Duhamel's formula, it follows that
 \begin{multline}\label{ineqwDuhamel}
  |\bw(t)|_{L^2} \leq |\Exp^{-(t-t_0) (\nu A P_N + \beta P_N)} \bw(t_0)|_{L^2} \\
  + \int_{t_0}^t \left|\Exp^{-(t-\tau) (\nu A P_N + \beta P_N)} P_N [ B(\bv_N(\tau),\bv_N(\tau)) - B(\bp(\tau),\bp(\tau))] \right|_{L^2} \rd \tau \\
  + \int_{t_0}^t \left| \Exp^{-(t-\tau) (\nu A P_N + \beta P_N)} P_N G(\tau) \right|_{L^2} \rd \tau \\
  + \beta \int_{t_0}^t \left| \Exp^{-(t-\tau) (\nu A P_N + \beta P_N)} P_N P_\sigma [ I_h(\bw(\tau)) - \bw(\tau)]\right|_{L^2} \rd \tau \\
  + \beta \int_{t_0}^t  \left| \Exp^{-(t-\tau) (\nu A P_N + \beta P_N)}  P_N P_\sigma I_h(\bq(\tau)) \right|_{L^2} \rd \tau .
 \end{multline}

 The estimates for the first three terms in the right-hand side of \eqref{ineqwDuhamel} now follow by writing
 \be\label{rewritingExpIh}
  \Exp^{-(t-\tau) (\nu A P_N + \beta P_N)} = \Exp^{-(t-\tau) (\nu A P_m + \beta P_m)} + \Exp^{-(t-\tau) (\nu A P_N Q_m + \beta P_N Q_m)}
 \ee
 and proceeding analogously as in the proof of Theorem \ref{thmestvNPNu}, so that
 \be\label{est1Ih}
  |\Exp^{-(t-t_0) (\nu A P_N + \beta P_N)} \bw(t_0)|_{L^2} \leq \Exp^{-(t-t_0)(\nu\lambda_1 + \beta)} |\bw(t_0)|_{L^2};
 \ee
  \begin{multline}\label{est2Ih}
  \int_{t_0}^t \left|\Exp^{-(t-\tau) (\nu A P_N + \beta P_N)} P_N [ B(\bv_N(\tau),\bv_N(\tau)) - B(\bp(\tau),\bp(\tau))] \right|_{L^2} \rd \tau \leq \\
  \leq 3 c_\alpha \frac{|\Omega|^{\alpha - \frac{1}{2}} M_1}{\nu^\alpha} \sup_{\tau \geq t_0} |\bw(\tau)|_{L^2} \left( \frac{(\nu \lambda_m)^\alpha}{\beta} + \frac{\Exp^{-\alpha}}{1-\alpha} \frac{1}{(\nu \lambda_{m+1})^{1-\alpha}} \right),
 \end{multline}
 where we used Proposition \ref{thmUnifBoundvNH1}; and
  \be\label{est3Ih}
  \int_{t_0}^t \left| \Exp^{-(t-\tau) (\nu A P_N + \beta P_N)} P_N G(\tau) \right|_{L^2} \rd \tau \leq c \frac{C_N L_N^2}{\nu},
 \ee
 with $C_N$ given by
 \be\label{defCNIh}
  C_N = C_0\frac{L_N^2}{\lambda_{N+1}^{3/2}} \left( M_1 + C_0 \frac{L_N}{\lambda_{N+1}^{1/2}} \right),
 \ee
 where $C_0$ is defined in \eqref{defC0}.

 In order to estimate the fourth term on the right-hand side of \eqref{ineqwDuhamel}, we use property \eqref{propintP2} of $I_h$ and obtain that
 \begin{multline}\label{est4aIh}
  \beta \int_{t_0}^t \left| \Exp^{-(t-\tau) (\nu A P_N + \beta P_N)} P_N P_\sigma [I_h(\bw(\tau)) - \bw(\tau)]\right|_{L^2} \rd \tau \\
  \leq \frac{\beta}{\nu^{1/2}} \int_{t_0}^t \| \nu^{1/2} A^{1/2} \Exp^{-(t-\tau) (\nu A P_N + \beta P_N)} \|_{\mL(P_N H)} |A^{-1/2} P_\sigma [I_h(\bw) - \bw]|_{L^2} \rd \tau \\
  \leq c_{-1}\frac{\beta h}{\nu^{1/2}} \sup_{\tau \geq t_0} |\bw(\tau)|_{L^2} \int_{t_0}^t \| \nu^{1/2} A^{1/2} \Exp^{-(s-t_0) (\nu A P_N + \beta P_N)} \|_{\mL(P_N H)} \rd s.
 \end{multline}

 Moreover, using again \eqref{rewritingExpIh} and the calculations from the proof of Theorem \ref{thmestvNPNu}, one obtains that
 \be
  \int_{t_0}^t \| \nu^{1/2} A^{1/2} \Exp^{-(s-t_0) (\nu A P_N + \beta P_N)} \|_{\mL(P_N H)} \rd s \leq \frac{(\nu \lambda_m)^{\frac{1}{2}}}{\beta} +
  2\frac{\Exp^{-\frac{1}{2}}}{(\nu\lambda_{m+1})^{\frac{1}{2}}}.
 \ee

 Thus, from \eqref{est4aIh},
 \begin{multline}\label{est4Ih}
  \beta \int_{t_0}^t \left| \Exp^{-(t-\tau) (\nu A P_N + \beta P_N)} P_N [P_\sigma I_h(\bw(\tau)) - \bw(\tau)]\right|_{L^2} \rd \tau \\
  \leq c_{-1}\frac{\beta h}{\nu^{1/2}} \left(\frac{(\nu \lambda_m)^{\frac{1}{2}}}{\beta} + 2\frac{\Exp^{-\frac{1}{2}}}{(\nu\lambda_{m+1})^{\frac{1}{2}}} \right) \sup_{\tau \geq t_0} |\bw(\tau)|_{L^2}
 \end{multline}

 Finally, for the last term in the right-hand side of \eqref{ineqwDuhamel}, we use property \eqref{propintP3} of $I_h$ and obtain that
 \begin{multline}\label{est5aIh}
  \beta \int_{t_0}^t \left| \Exp^{-(t-\tau) (\nu A P_N + \beta P_N)} P_N P_\sigma I_h(\bq(\tau)) \right|_{L^2} \rd \tau \leq \\
  \leq \widetilde{c_0} \frac{|\Omega|^{3/4}}{h^2 \lambda_{N+1}^{1/4}} \beta \int_{t_0}^t \|\Exp^{-(t-\tau) (\nu A P_N + \beta P_N)} \|_{\mL(P_N H)} |\bq(\tau)|_{L^2} \rd \tau.
 \end{multline}

 From \eqref{boundqL2}, we have that
 \be\label{ineqQL2Ih}
  |\bq(\tau)|_{L^2} \leq C_0 \frac{L_N}{\lambda_{N+1}}.
 \ee

 Thus, from \eqref{est5aIh},
 \begin{multline}\label{est5bIh}
  \beta \int_{t_0}^t \left| \Exp^{-(t-\tau) (\nu A P_N + \beta P_N)} P_N P_\sigma I_h(\bq(\tau)) \right|_{L^2} \rd \tau \leq \\
  \leq \widetilde{c_0} \frac{C_0 |\Omega|^{3/4}\beta}{h^2} \frac{L_N}{\lambda_{N+1}^{5/4}} \int_{t_0}^t \|\Exp^{-(t-\tau) (\nu A P_N + \beta P_N)} \|_{\mL(P_N H)} \rd \tau.
 \end{multline}

 Since
 \begin{multline}
  \int_{t_0}^t \|\Exp^{-(t-\tau) (\nu A P_N + \beta P_N)} \|_{\mL(P_N H)} \rd \tau = \int_{t_0}^t \max_{1 \leq j \leq N} \Exp^{-(t-\tau)(\nu \lambda_j + \beta)} \rd \tau \\
  = \int_{t_0}^t \Exp^{-(t-\tau)(\nu \lambda_1 + \beta)} \rd \tau = \frac{1 - \Exp^{-(t-t_0)(\nu \lambda_1 + \beta)}}{\nu \lambda_1 + \beta} \leq \frac{1}{\beta},
 \end{multline}
 then, from \eqref{est5bIh},
 \be\label{est5Ih}
  \beta \int_{t_0}^t \left| \Exp^{-(t-\tau) (\nu A P_N + \beta P_N)} P_N P_\sigma I_h(\bq(\tau)) \right|_{L^2} \rd \tau \leq \widetilde{c_0} \frac{C_0 |\Omega|^{3/4}}{h^2} \frac{L_N}{\lambda_{N+1}^{5/4}}.
 \ee

 Now, using estimates \eqref{est1Ih}-\eqref{est3Ih}, \eqref{est4Ih} and \eqref{est5Ih} in \eqref{ineqwDuhamel}, we obtain that
 \begin{multline}
  |\bw(t)|_{L^2} \leq \Exp^{-(t-t_0)(\nu\lambda_1 + \beta)} |\bw(t_0)|_{L^2} + \\
  + \left[ 3c_\alpha \frac{|\Omega|^{\alpha - \frac{1}{2}} M_1}{\nu^\alpha} \left( \frac{(\nu \lambda_m)^\alpha}{\beta} + \frac{\Exp^{-\alpha}}{1-\alpha} \frac{1}{(\nu \lambda_{m+1})^{1-\alpha}} \right) \right.\\
  \left. + c_{-1}\frac{\beta h}{\nu^{1/2}} \left(\frac{(\nu \lambda_m)^{\frac{1}{2}}}{\beta} + 2\frac{\Exp^{-\frac{1}{2}}}{(\nu\lambda_{m+1})^{\frac{1}{2}}} \right) \right]  \sup_{\tau \geq t_0} |\bw(\tau)|_{L^2} + c \frac{C_N L_N^2}{\nu} \\
  + \widetilde{c_0} \frac{C_0 |\Omega|^{3/4}}{h^2} \frac{L_N}{\lambda_{N+1}^{5/4}}.
 \end{multline}

 Hence, using that $\beta \geq \nu \lambda_m$ and the definition of $C_N$ in \eqref{defCNIh},
 \begin{multline}\label{est6Ih}
  |\bw(t)|_{L^2} \leq \Exp^{-(t-t_0)(\nu\lambda_1 + \beta)} |\bw(t_0)|_{L^2} + \\
  + \left[ c c_\alpha \left( 1 + \frac{\Exp^{-\alpha}}{1-\alpha}\right) \frac{|\Omega|^{\alpha - \frac{1}{2}} M_1}{\nu \lambda_m^{1-\alpha}} +  c_{-1}\frac{\beta h}{\nu \lambda_m^{1/2}} \right]  \sup_{\tau \geq t_0} |\bw(\tau)|_{L^2}
  + C \frac{L_N}{\lambda_{N+1}^{5/4}}.
 \end{multline}

 Using hypotheses \eqref{condm2} and \eqref{condh2} with suitable absolute constants $c$, we obtain that
 \be\label{defgammaIh}
  \gamma = c c_\alpha \left( 1 + \frac{\Exp^{-\alpha}}{1-\alpha}\right) \frac{|\Omega|^{\alpha - \frac{1}{2}} M_1}{\nu \lambda_m^{1-\alpha}} +  c_{-1}\frac{\beta h}{\nu \lambda_m^{1/2}} < 1
 \ee
 and
 \be
  \theta = \Exp^{-\frac{\nu \lambda_1 + \beta}{\nu \lambda_1}} + \frac{\gamma}{1 - \gamma} < 1.
 \ee

 Therefore, \eqref{estwL2Ih} follows from \eqref{est6Ih} and Lemma \ref{lemesty} with $a=1$, $b = \nu\lambda_1 + \beta$, $\gamma$ as given in \eqref{defgammaIh} and $\varepsilon = C L_N/\lambda_{N+1}^{5/4}$.
\end{proof}

The result of Theorem \ref{thmestvNPNuIh} now yields, as in \eqref{ordererrorSGML2}-\eqref{ordererrorSGMH1}, an estimate of the error associated to the Galerkin approximation of \eqref{DataAssAlg} relative to the reference solution $\bu$ of \eqref{NSEeqs}, in the general case of an interpolant operator satisfying properties \eqref{propintP1}-\eqref{propintP3}.

\begin{cor}\label{corerrorSGMIh}
 Assume the hypotheses from Theorem \ref{thmestvNPNuIh}. Then, there exists $T = T(\nu,\lambda_1,|\bg|_{L^2},N) \geq t_0$ such that, for every $N \geq m$,
 \be
  \sup_{t\geq T} |\bv_N(t) - \bu(t)|_{L^2} \leq C\frac{L_N}{\lambda_{N+1}},
 \ee
 and
 \be
  \sup_{t \geq T} \|\bv_N(t) - \bu(t)\|_{H^1} \leq C\frac{L_N}{\lambda_{N+1}^{1/2}},
 \ee
 where $C$ is a constant depending on $\nu$, $\lambda_1$, $|\bg|_{L^2}$ and $1/h^2$, but independent of $N$.
\end{cor}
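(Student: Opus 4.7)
The plan is to combine the triangle inequality decomposition $\bv_N - \bu = (\bv_N - \bp) - \bq$ with the uniform in time bound of Theorem \ref{thmestvNPNuIh} for the low modes error and the uniform bounds of Theorem \ref{unifboundsQm} for the high modes $\bq = Q_N \bu$. Concretely, I would write
\[
 |\bv_N(t) - \bu(t)|_{L^2} \leq |\bv_N(t) - \bp(t)|_{L^2} + |\bq(t)|_{L^2},
\]
and analogously in the $H^1$ norm, and estimate each summand separately.

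For the $L^2$ estimate, Theorem \ref{thmestvNPNuIh} gives, for every $N \geq m$,
\[
 |\bv_N(t) - \bp(t)|_{L^2} \leq c\,\theta^{(t-t_0)\nu\lambda_1 - 1}\,|\bv_N(t_0)-\bp(t_0)|_{L^2} + C\frac{L_N}{\lambda_{N+1}^{5/4}},
\]
with $\theta \in [0,1)$. Choosing $T = T(\nu,\lambda_1,|\bg|_{L^2},N) \geq t_0$ large enough so that the transient term $c\,\theta^{(t-t_0)\nu\lambda_1-1}|\bv_N(t_0)-\bp(t_0)|_{L^2}$ is bounded by $L_N/\lambda_{N+1}^{5/4}$ for all $t \geq T$ (which is possible since the initial data $\bv_0$ lies in $B_V(M_1)$, so $|\bv_N(t_0)-\bp(t_0)|_{L^2}$ is controlled by $2M_1$), we obtain $|\bv_N(t)-\bp(t)|_{L^2} \leq C L_N/\lambda_{N+1}^{5/4}$ for $t \geq T$. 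Combining with \eqref{boundqL2} and using $\lambda_{N+1}^{-5/4} \leq \lambda_1^{-1/4}\lambda_{N+1}^{-1}$, the two contributions add up to a bound of order $L_N/\lambda_{N+1}$, as required.

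For the $H^1$ estimate, since $\bv_N - \bp \in P_N H$, the inverse Poincaré inequality \eqref{inversePoincarevNPNu} gives
\[
 \|\bv_N(t) - \bp(t)\|_{H^1} \leq \lambda_N^{1/2}|\bv_N(t)-\bp(t)|_{L^2} \leq C\frac{L_N}{\lambda_{N+1}^{3/4}}, \quad t \geq T,
\]
using that $\lambda_N \leq \lambda_{N+1}$. On the other hand, \eqref{boundqH1} yields $\|\bq(t)\|_{H^1} \leq C_1 L_N/\lambda_{N+1}^{1/2}$. Since $\lambda_{N+1}^{-3/4} \leq \lambda_1^{-1/4}\lambda_{N+1}^{-1/2}$, the high-modes term dominates and we conclude $\|\bv_N(t)-\bu(t)\|_{H^1} \leq C L_N/\lambda_{N+1}^{1/2}$ uniformly for $t \geq T$.

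There is essentially no obstacle here: the only delicate point is the $N$-dependence of $T$, which arises from the requirement that the exponentially decaying transient term from Theorem \ref{thmestvNPNuIh} fall below the stationary residual $CL_N/\lambda_{N+1}^{5/4}$; one solves explicitly for $T$ in terms of $\log(\lambda_{N+1})$, $\theta$, $\nu$, $\lambda_1$, and $M_1$, which matches the dependence stated in the corollary. All other constants absorb into the $C$ appearing in the statement, which may depend on $1/h^2$ through the residual term inherited from Theorem \ref{thmestvNPNuIh}.
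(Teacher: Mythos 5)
Your proposal is correct and follows essentially the same route as the paper, which proves this corollary exactly via the triangle-inequality decomposition $\bv_N - \bu = (\bv_N - \bp) - \bq$ outlined in \eqref{ordererrorSGML2}--\eqref{ordererrorSGMH1}, combining Theorem \ref{thmestvNPNuIh}, the bounds \eqref{boundqL2}--\eqref{boundqH1}, and the inverse Poincar\'e inequality \eqref{inversePoincarevNPNu}. Your additional remarks on absorbing $\lambda_{N+1}^{-5/4}$ into $\lambda_{N+1}^{-1}$ and on the $N$-dependence of $T$ through the decay of the transient term are accurate and consistent with the paper.
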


Finally, we now obtain an estimate of the error committed when applying the Postprocessing Galerkin method to system \eqref{DataAssAlg}, in order to obtain an approximation of the reference solution $\bu$ of \eqref{NSEeqs}, in the case of an interpolant operator satisfying properties \eqref{propintP1}-\eqref{propintP3}. The result shows that the convergence rate of the Postprocessing Galerkin method in this case, although not as good as the one obtained in Theorem \ref{thmerrorPPGM}, is still better than the convergence rate of the standard Galerkin method. The proof follows immediately from the result of Theorem \ref{thmestvNPNuIh} and \eqref{distuPhi1L2b}-\eqref{inversePoincarevNPNu}.

\begin{thm}\label{thmerrorPPGMIh}
Assume the hypotheses from Theorem \ref{thmestvNPNuIh}, with $\bu$ satisfying, in addition, \eqref{distuPhi1L2aa} and \eqref{distuPhi1H1aa}, for every $t \geq t_0$. Then, there exists $T = T(\nu,\lambda_1,|\bg|_{L^2},N)$ $\geq t_0$ such that, for every $N \geq m$,
\be
 \sup_{t \geq T}|[\bv_N(t) + \Phi_1(\bv_N(t))] - \bu(t)|_{L^2} \leq C \frac{L_N}{\lambda_{N+1}^{5/4}},
\ee
and
\be
 \sup_{t \geq T}\|[\bv_N(t) + \Phi_1(\bv_N(t))] - \bu(t)\|_{H^1} \leq C \frac{L_N}{\lambda_{N+1}^{3/4}},
\ee
where $C$ is a constant depending on $\nu$, $\lambda_1$, $|\bg|_{L^2}$ and $1/h^2$, but independent of $N$.
\end{thm}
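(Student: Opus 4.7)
The plan is to exploit the decomposition \eqref{rewritingerror} together with the triangle-inequality bounds \eqref{distuPhi1L2b}--\eqref{distuPhi1H1b} and the Lipschitz property of $\Phi_1$ on bounded subsets of $P_N V$. Essentially, once the low-mode error $|\bv_N(t) - \bp(t)|_{L^2}$ has been controlled by Theorem \ref{thmestvNPNuIh}, the rest is a direct assembly.

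First, I would fix $N \geq m$ and, using the estimate from Theorem \ref{thmestvNPNuIh},
\[
 |\bv_N(t) - \bp(t)|_{L^2} \leq c\,\theta^{(t-t_0)\nu\lambda_1-1}\,|\bv_N(t_0)-\bp(t_0)|_{L^2} + C\frac{L_N}{\lambda_{N+1}^{5/4}},
\]
with $\theta \in [0,1)$, choose $T = T(\nu,\lambda_1,|\bg|_{L^2},N) \geq t_0$ large enough so that the exponentially decaying transient term is dominated by $C L_N/\lambda_{N+1}^{5/4}$ for every $t \geq T$. This yields $|\bv_N(t)-\bp(t)|_{L^2} \leq C L_N/\lambda_{N+1}^{5/4}$ uniformly for $t \geq T$. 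The inverse Poincaré inequality \eqref{inversePoincarevNPNu} then immediately upgrades this bound to $\|\bv_N(t)-\bp(t)\|_{H^1} \leq \lambda_N^{1/2}\,|\bv_N(t)-\bp(t)|_{L^2} \leq C L_N/\lambda_{N+1}^{3/4}$, which is where the exponent drop from $5/4$ to $3/4$ originates.

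Next, by Proposition \ref{thmUnifBoundvNH1}, the Galerkin approximation satisfies $\bv_N(t) \in P_N B_V(3M_1)$ for all $t \geq t_0$, so that the Lipschitz bounds \eqref{Phi1LipschitzL2}--\eqref{Phi1LipschitzH1} for $\Phi_1$ apply with the Lipschitz constant $l = C \lambda_{N+1}^{-1/4}$, which is uniformly bounded (since $\lambda_{N+1} \geq \lambda_1 > 0$). Combining these with the approximate inertial manifold estimates from Theorem \ref{thmdistuPhi1}, the triangle-inequality bounds \eqref{distuPhi1L2b} and \eqref{distuPhi1H1b} give
\[
 |(\bv_N + \Phi_1(\bv_N)) - \bu|_{L^2} \leq (1 + l)\,\frac{C L_N}{\lambda_{N+1}^{5/4}} + \frac{C L_N}{\lambda_{N+1}^{3/2}} \leq \frac{C L_N}{\lambda_{N+1}^{5/4}},
\]
and analogously
\[
 \|(\bv_N + \Phi_1(\bv_N)) - \bu\|_{H^1} \leq (1 + l)\,\frac{C L_N}{\lambda_{N+1}^{3/4}} + \frac{C L_N}{\lambda_{N+1}} \leq \frac{C L_N}{\lambda_{N+1}^{3/4}},
\]
which are exactly the desired estimates.

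There is no serious obstacle at this stage: all the technical work has already been absorbed into Theorem \ref{thmestvNPNuIh} (the Duhamel analysis with the three-way splitting of the semigroup needed to cope with the non-commutativity of $P_\sigma I_h$ and $A$), into Theorem \ref{thmdistuPhi1} (the approximate inertial manifold estimate), and into the Lipschitz property of $\Phi_1$ together with the uniform-in-time $V$-bound on $\bv_N$ from Proposition \ref{thmUnifBoundvNH1}. The slight loss in the convergence rate compared with Theorem \ref{thmerrorPPGM} is precisely inherited from the weaker rate $L_N/\lambda_{N+1}^{5/4}$ in Theorem \ref{thmestvNPNuIh}, which is itself a consequence of the extra term $\beta P_N P_\sigma I_h(\bq)$ appearing in \eqref{eqwintrosecresults} in the general interpolant case.
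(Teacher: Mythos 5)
Your proposal is correct and follows exactly the route the paper intends: the paper's own proof is the one-line remark that the theorem "follows immediately from the result of Theorem \ref{thmestvNPNuIh} and \eqref{distuPhi1L2b}--\eqref{inversePoincarevNPNu}," and your write-up simply spells out that assembly (choosing $T$ to absorb the transient, upgrading to $H^1$ via \eqref{inversePoincarevNPNu}, and invoking the Lipschitz bound for $\Phi_1$ on $P_N B_V(3M_1)$ guaranteed by Proposition \ref{thmUnifBoundvNH1}). No gaps.
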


\begin{rmk}
 We emphasize that the main purpose of the postprocessing step applied to the Galerkin method is to improve the accuracy of the numerical approximation of $\bv$, solution of \eqref{DataAssAlg}, and thus $\bu$, solution of \eqref{NSEeqs}. The fact that the numerical approximation of $\bv$ given by the Postprocessing Galerkin method yields a uniform in time error estimate is actually due to the fact that the Galerkin approximation $\bv_N$ of $\bv$ yields a uniform in time error estimate. Indeed, the latter is valid for an even more general class of interpolant operators than the one considered in subsection \ref{subsecAnotherClassIntOp}. Namely, the family of operators $I_h: H^1(\Omega)^2 \to L^2(\Omega)^2$ which are only required to satisfy property \eqref{propintP1}; and also the family of operators $I_h: H^2(\Omega)^2 \to L^2(\Omega)^2$ satisfying (see \cite{AzouaniOlsonTiti2014})
 \[
  \| \varphi - I_h (\varphi)\|_{H^2} \leq c_1 h \|\varphi\|_{H^1} + c_2 h^2 \|\varphi\|_{H^2} \quad \forall \varphi \in H^2(\Omega)^2,
 \]
where $c_1$ and $c_2$ are positive constants, and $\|\cdot\|_{H^2}$ denotes the usual Sobolev norm of the space $H^2(\Omega)^2$. A physically relevant example of interpolant operator of this latter type is given by measurements at a finite set of nodal points in $\Omega$.

It is not difficult to show that (using, in particular, similar ideas from the proof of Proposition \ref{thmUnifBoundvNH1}), under the appropriate conditions on the parameters $\beta$ and $h$ and for both types of interpolant operators, there exists $T = T(\nu, \lambda_1, |\bg|_{L^2},N) \geq t_0$ large enough such that
\[
 \sup_{t \geq T} \|\bv_N(t) - \bu(t)\|_{H^1} \leq C \frac{L_N^2}{\lambda_{N+1}^{1/2}},
\]
where $C$ is a constant depending on $\nu$, $\lambda_1$ and $|\bg|_{L^2}$, but independent of $N$. Moreover, for the former class of interpolant operators, one can also show that
\[
 \sup_{t \geq T} |\bv_N(t) - \bu(t)|_{L^2} \leq C \frac{L_N}{\lambda_{N+1}},
\]
where, again, $T = T(\nu, \lambda_1, |\bg|_{L^2},N) \geq t_0$ and $C = C(\nu, \lambda_1, |\bg|_{L^2})$.
\end{rmk}

\section{Acknowledgments}

The authors would like to thank Prof. Ciprian Foias for useful and enlightening discussions during the development of this work. The work of C. F. M. was supported by the ONR grant N00014-15-1-2333. The work of E. S. T. was supported in part by the ONR grant N00014-15-1-2333 and the NSF grants DMS-1109640 and DMS-1109645.

\setcounter{equation}{0}
\appendix

\section*{Appendix}
\renewcommand{\thesection}{A}
\renewcommand{\theequation}{{A.}\arabic{equation}}

The aim of this section is to show that the example of interpolant operator given by local averages over finite volume elements (see, e.g.,  \cite{FoiasTiti1991,JonesTiti1992,JonesTiti1993}), assuming periodic boundary conditions, satisfies properties \eqref{propintP1}-\eqref{propintP3} considered in subsection \ref{subsecAnotherClassIntOp}.

Let $\Omega = (0,L)\times(0,L) \subset \mathbb{R}^2$ be a basic domain of periodicity, and consider a partition of $\Omega$ into $K$ squares with sides of length $h = L / \sqrt{K}$. Let
\[
 \Lambda = \{(j,l) \in \mathbb{N}^2: 1 \leq j,l \leq \sqrt{K}\}
\]
and, for every $\alpha = (j,l) \in \Lambda$, let $Q_\alpha$ be the volume element given by the square
\[
 Q_\alpha = [(j-1)h,jh) \times [(l-1)h, lh).
\]

Consider the interpolant operator $I_h: L^2(\Omega)^2 \to L^2(\Omega)^2$ given by
\be\label{defIh}
 I_h(\varphi) = \sum_{\alpha \in \Lambda} \overline{\varphi_\alpha} \chi_{Q_\alpha}, \quad \forall \varphi \in L^2(\Omega)^2,
\ee
where $\overline{\varphi_\alpha}$ is the local average of $\varphi$ over the volume element $Q_\alpha$, i.e.
\be
\overline{\varphi_\alpha} = \frac{1}{|Q_\alpha|} \int_{Q_\alpha} \varphi(y) \rd y .
\ee

The fact that $I_h$ defined in \eqref{defIh} satisfies property \eqref{propintP1} follows from the calculations in \cite[Appendix]{JonesTiti1992}. Thus, it only remains to verify properties \eqref{propintP2} and \eqref{propintP3}. In fact, we show that this particular example of $I_h$ sastisfies a stronger property than \eqref{propintP3}, with respect to the $(L^\infty(\Omega))^2$-norm.

Notice that, in the present case, $|\Omega| = L^2$.

\begin{prop}\label{propAppendix}
Let $I_h: L^2(\Omega)^2 \to L^2(\Omega)^2$ be the operator defined by \eqref{defIh}. Then, it holds:
\begin{enumerate}[(i)]
\item\label{propintP2App} There exists a positive constant $c_{-1}$ such that 
 \be
 \|\varphi - I_h(\varphi)\|_{H^{-1}} \leq c_{-1} h |\varphi|_{L^2}, \quad \forall \varphi \in L^2(\Omega)^2.
\ee
 \item\label{propintP3App} There exists a positive constant $\widetilde{c_0}$ such that
 \be\label{ineqpropintP3App}
  \|I_h(\bq)\|_{L^{\infty}} \leq c \frac{L^{1/2}}{h^2 \lambda_{N+1}^{1/4}} |\bq|_{L^2}, \quad \forall \bq \in Q_N H.
 \ee
 Consequently,
 \be
   |I_h(\bq)|_{L^2} \leq \widetilde{c_0} \frac{L^{3/2}}{h^2 \lambda_{N+1}^{1/4}} |\bq|_{L^2}, \quad \forall \bq \in Q_N H.
 \ee
\end{enumerate}
\end{prop}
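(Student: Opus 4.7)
Part (i) is a standard duality argument: for $\varphi \in L^2(\Omega)^2$ and any test $\psi \in H^1(\Omega)^2$, one writes
\[
(\varphi - I_h(\varphi), \psi)_{L^2} = \sum_{\alpha \in \Lambda}\int_{Q_\alpha}(\varphi - \overline{\varphi_\alpha})\,\psi\,\rd x.
\]
Since $\int_{Q_\alpha}(\varphi - \overline{\varphi_\alpha})\rd x = 0$, the constant $\overline{\psi_\alpha}$ may be subtracted from $\psi$ freely, so the sum equals $\sum_\alpha \int_{Q_\alpha}(\varphi-\overline{\varphi_\alpha})(\psi-\overline{\psi_\alpha})\rd x$. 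Cauchy--Schwarz on each $Q_\alpha$, the Poincar\'e inequality for mean-zero functions on a cube of side $h$ (giving $\|\psi-\overline{\psi_\alpha}\|_{L^2(Q_\alpha)} \le c h\|\nabla\psi\|_{L^2(Q_\alpha)}$), the trivial bound $\|\varphi-\overline{\varphi_\alpha}\|_{L^2(Q_\alpha)} \le \|\varphi\|_{L^2(Q_\alpha)}$, and then Cauchy--Schwarz over $\alpha$ yield $|(\varphi - I_h(\varphi), \psi)_{L^2}| \le c_{-1} h\,|\varphi|_{L^2}\|\psi\|_{H^1}$. Dualizing gives property \eqref{propintP2}.

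For part (ii), note that $\|I_h(\bq)\|_{L^\infty} = \max_\alpha |\overline{\bq_\alpha}|$ and that each component satisfies $(\overline{\bq_\alpha})_i = h^{-2}(\bq,\chi_{Q_\alpha}\mathbf{e}_i)_{L^2}$. The key algebraic observation is that, since $\bq \in H$ and $\bq = Q_N \bq$,
\[
(\bq,\chi_{Q_\alpha}\mathbf{e}_i)_{L^2} = (\bq, Q_N P_\sigma(\chi_{Q_\alpha}\mathbf{e}_i))_{L^2},
\]
so Cauchy--Schwarz gives $|(\overline{\bq_\alpha})_i| \le h^{-2}|\bq|_{L^2}\,\|Q_N P_\sigma(\chi_{Q_\alpha}\mathbf{e}_i)\|_{L^2}$. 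It remains to show
\be\label{plan:key}
\|Q_N P_\sigma(\chi_{Q_\alpha}\mathbf{e}_i)\|_{L^2} \le c\,\frac{L^{1/2}}{\lambda_{N+1}^{1/4}},
\ee
uniformly in $\alpha$ and $i$, which plugged in gives \eqref{ineqpropintP3App}; the asserted $L^2$ bound then follows from $|I_h(\bq)|_{L^2} \le |\Omega|^{1/2}\|I_h(\bq)\|_{L^\infty} = L\|I_h(\bq)\|_{L^\infty}$.

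To establish \eqref{plan:key}, I would expand in the explicit orthonormal basis of divergence-free Fourier modes on the torus, $\bw_k(x) = L^{-1}|k|^{-1}(-k_2,k_1)e^{2\pi i k\cdot x/L}$ for $k \in \mathbb{Z}^2\setminus\{0\}$, with eigenvalues $\lambda_k = 4\pi^2|k|^2/L^2$. Direct computation, separating the integral over $Q_\alpha$ into a product over the two coordinates, gives
\[
|(\chi_{Q_\alpha}\mathbf{e}_i,\bw_k)_{L^2}|^2 \le c\,\frac{L^2}{k_1^2|k|^2} \qquad (k_1 k_2 \ne 0),
\]
where the factor $|k|^{-2}$ arises from $|\xi_{k,1}|^2 = k_2^2/|k|^2$ (respectively $|\xi_{k,2}|^2 = k_1^2/|k|^2$) and gains one power of $|k|^{-1}$ over the naive bound $L^2/(k_1^2 k_2^2)$. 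Summing over $\{k:\lambda_k \ge \lambda_{N+1}\}$, i.e.\ $|k| \ge K$ with $K \sim L\lambda_{N+1}^{1/2}$, and splitting into the regions $|k_1| \ge K/\sqrt{2}$ (in which $\sum_{k_2}(k_1^2+k_2^2)^{-1} \lesssim |k_1|^{-1}$ gives a total $\lesssim K^{-2}$) and $|k_1| < K/\sqrt{2}$ (forcing $|k_2| \ge K/\sqrt{2}$ and giving $\lesssim K^{-1}$), yields $\sum \lesssim L^2/K \sim L/\lambda_{N+1}^{1/2}$, hence \eqref{plan:key}. The contributions of the axes $k_1 = 0$ or $k_2 = 0$ are treated analogously and are of lower order since $h \le L$.

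The main technical point is the Fourier tail estimate in the last paragraph: the rate $\lambda_{N+1}^{-1/4}$ is essentially sharp and reflects the fact that $\chi_{Q_\alpha}$ belongs to $H^s$ only for $s < 1/2$ (so that the naive Sobolev-interpolation argument $|Q_N\psi|_{L^2} \le \lambda_{N+1}^{-s/2}\|\psi\|_{H^s}$ degenerates logarithmically as $s \nearrow 1/2$); the direct discrete Fourier computation is cleaner and avoids the logarithmic divergence. Once this step is secured, the rest of the argument is essentially bookkeeping and the $L^2$-bound is a one-line consequence.
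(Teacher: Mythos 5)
Your proof is correct and reaches the same bounds, but it is organized differently from the paper's in both parts. For part (i), the paper simply notes that $I_h$ is symmetric, $(I_h(\varphi),\psi)_{L^2}=(\varphi,I_h(\psi))_{L^2}$, and then applies the already-available property \eqref{propintP1} to the test function $\psi$, so the whole proof is two lines; you instead re-derive the bilinear estimate from scratch via the local mean-zero cancellation and the Poincar\'e inequality on each cell $Q_\alpha$ --- this is self-contained (it essentially reproves \eqref{propintP1} in bilinear form) but equivalent. For part (ii), the paper works on the ``primal'' side: it expands $\bq=\sum_{|k|\ge\kappa_N}\hat{\bu}_k\Exp^{2\pi i k\cdot\by/L}$, writes $I_h(\bq)(\bx)$ pointwise, splits the sum into the pieces with $k_1=0$, $k_2=0$, and $k_1k_2\neq 0$, bounds each cell integral of the exponential by $L/(\pi|k_j|)$, and closes with Cauchy--Schwarz against the lattice tail $\sum_{|k|\ge\kappa_N}k_1^{-2}k_2^{-2}=O(\kappa_N^{-1})$. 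You take the adjoint route: you write the cell average as $h^{-2}(\bq,\chi_{Q_\alpha}\mathbf{e}_i)_{L^2}=h^{-2}(\bq,Q_NP_\sigma(\chi_{Q_\alpha}\mathbf{e}_i))_{L^2}$ and compute the high-frequency $L^2$ content of the indicator, reducing to $\sum_{|k|\ge\kappa_N}k_1^{-2}|k|^{-2}=O(\kappa_N^{-1})$. The two lattice sums have the same order, so both yield $\lambda_{N+1}^{-1/4}$; your version isolates a clean reusable statement (the bound on $\|Q_NP_\sigma(\chi_{Q_\alpha}\mathbf{e}_i)\|_{L^2}$, which also explains why the exponent $1/4$ is natural, as $\chi_{Q_\alpha}\in H^s$ only for $s<1/2$), and it exploits the divergence-free structure of the eigenfunctions (the factor $k_2^2/|k|^2\le1$), which the paper does not need and does not use. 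Your treatment of the degenerate axes $k_1=0$ or $k_2=0$ and the final step $|I_h(\bq)|_{L^2}\le L\,\|I_h(\bq)\|_{L^\infty}$ match the paper's. No gaps.
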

\begin{proof}
From its definition, it follows immediately that $I_h$ is a symmetric operator, i.e.
\be\label{Ihsymmetric}
(I_h(\varphi), \psi) = (\varphi, I_h(\psi)), \quad \forall \psi \in L^2(\Omega)^2.
\ee
Thus, using \eqref{Ihsymmetric} and property \eqref{propintP1}, we obtain that
\begin{multline}
\|\varphi - I_h(\varphi)\|_{H^{-1}} = \sup_{\stackrel{\psi \in H_0^1(\Omega)^2}{\|\psi\|_{H^1}=1}} |(\varphi - I_h(\varphi), \psi)| =  \sup_{\stackrel{\psi \in H_0^1(\Omega)^2}{\|\psi\|_{H^1}=1}} |(\varphi,\psi - I_h(\psi))| \\
\leq \sup_{\stackrel{\psi \in H_0^1(\Omega)^2}{\|\psi\|_{H^1}=1}} c_0 h |\varphi|_{L^2} \|\psi\|_{H^1} = c_0 h |\varphi|_{L^2},
\end{multline}
which proves that \eqref{propintP2App} is satisfied with $c_{-1} = c_0$.

Now let us prove \eqref{propintP3App}.
Let $\bq \in Q_N H$ and consider its Fourier expansion, given by
\be\label{expansionq}
 \bq(\by) = \sum_{|k| \geq \kappa_N} \hat{\bu}_{k} \Exp^{2 \pi i\frac{k}{L}\cdot \by}, \quad \forall \by \in \Omega,
\ee
where
\be
\kappa_N = \frac{L}{2\pi} \lambda_{N+1}^{1/2}.
\ee

From \eqref{expansionq} and the definition of $I_h$ in \eqref{defIh}, we have that
\[
 I_h(\bq)(\bx) = \sum_{\alpha \in \Lambda} \sum_{|k| \geq \kappa_N} \frac{1}{|Q_\alpha|} \hat{\bu}_{k} \left( \int_{Q_\alpha} \Exp^{2 \pi i\frac{k}{L}\cdot \by} \rd \by \right) \chi_{Q_\alpha}(\bx).
\]

Thus,
\be\label{estabsvalueIhqx}
 |I_h(\bq)(\bx)| \leq  \sum_{\alpha \in \Lambda} \sum_{|k| \geq \kappa_N} \frac{1}{|Q_\alpha|} |\hat{\bu}_{k}| \left| \int_{Q_\alpha} \Exp^{2 \pi i\frac{k}{L}\cdot \by} \rd \by\right| \chi_{Q_\alpha}(\bx)
 = (S_1 + S_2 + S_3)(\bx),
\ee
where
\[
 S_1(\bx) = \sum_{\alpha \in \Lambda} \sum_{\stackrel{|k| \geq \kappa_N}{k_1 = 0}} \frac{1}{|Q_\alpha|} |\hat{\bu}_{k}| \left| \int_{Q_\alpha} \Exp^{2 \pi i\frac{k}{L} \cdot \by} \rd \by\right| \chi_{Q_\alpha}(\bx),
\]
\[
 S_2(\bx) = \sum_{\alpha \in \Lambda} \sum_{\stackrel{|k| \geq \kappa_N}{k_2 = 0}} \frac{1}{|Q_\alpha|} |\hat{\bu}_{k}| \left| \int_{Q_\alpha} \Exp^{2 \pi i\frac{k}{L}\cdot \by} \rd \by\right| \chi_{Q_\alpha}(\bx),
\]
\[
 S_3(\bx) = \sum_{\alpha \in \Lambda} \sum_{\stackrel{|k| \geq \kappa_N}{k_1 \neq 0, k_2 \neq 0}} \frac{1}{|Q_\alpha|} |\hat{\bu}_{k}| \left| \int_{Q_\alpha} \Exp^{2 \pi i\frac{k}{L} \cdot \by} \rd \by\right| \chi_{Q_\alpha}(\bx).
\]

Notice that
\begin{multline}\label{estS1}
 S_1(\bx) = \sum_{\alpha \in \Lambda} \sum_{\stackrel{|k| \geq \kappa_N}{k_1 = 0}} \frac{1}{|Q_\alpha|} |\hat{\bu}_{k}| \left| \int_{(l-1)h}^{lh} \int_{(j-1)h}^{jh} \Exp^{2 \pi i\frac{k_2}{L} y_2} \rd y_1 \rd y_2 \right| \chi_{Q_\alpha}(\bx) \\
 = \sum_{\alpha \in \Lambda} \sum_{\stackrel{|k| \geq \kappa_N}{k_1 = 0}} \frac{1}{h^2} |\hat{\bu}_{k}| h \left| \frac{L}{2 \pi i k_2}\Exp^{2 \pi i\frac{k_2}{L} l h}(1 - \Exp^{- 2 \pi i\frac{k_2}{L} h}) \right|  \chi_{Q_\alpha}(\bx) \\
 \leq \frac{L}{\pi h} \sum_{\stackrel{|k| \geq \kappa_N}{k_1 = 0}}  |\hat{\bu}_{k}| \frac{1}{|k_2|} \left( \sum_{\alpha \in \Lambda} \chi_{Q_\alpha}(\bx) \right) = \frac{L}{\pi h} \sum_{\stackrel{|k| \geq \kappa_N}{k_1 = 0}}  |\hat{\bu}_{k}| \frac{1}{|k_2|} \\
 \leq \frac{L}{\pi h} \left(\sum_{\stackrel{|k| \geq \kappa_N}{k_1 = 0}} |\hat{\bu}_{k}|^2 \right)^{1/2} \left(\sum_{\stackrel{|k| \geq \kappa_N}{k_1 = 0}} \frac{1}{|k_2|^2} \right)^{1/2} \leq \frac{L}{\pi h} \frac{|\bq|_{L^2}}{|\Omega|^{1/2}} \left(\sum_{\stackrel{|k| \geq \kappa_N}{k_1 = 0}} \frac{1}{|k_2|^2} \right)^{1/2}\\
  = \frac{1}{\pi h} |\bq|_{L^2} \left(\sum_{\stackrel{|k| \geq \kappa_N}{k_1 = 0}} \frac{1}{|k_2|^2} \right)^{1/2} \leq   \frac{c}{h} |\bq|_{L^2} \frac{1}{\kappa_N^{1/2}} \leq \frac{c}{hL^{1/2}} |\bq|_{L^2} \frac{1}{\lambda_{N+1}^{1/4}}.
\end{multline}

Analogously,
\be\label{estS2}
 S_2(\bx) \leq \frac{c}{hL^{1/2}} |\bq|_{L^2} \frac{1}{\lambda_{N+1}^{1/4}}.
\ee

Moreover,
\begin{multline}\label{estS3}
 S_3(\bx) = \sum_{\alpha \in \Lambda} \sum_{\stackrel{|k| \geq \kappa_N}{k_1 = 0}} \frac{1}{|Q_\alpha|} |\hat{\bu}_{k}| \left| \int_{(l-1)h}^{lh} \int_{(j-1)h}^{jh} \Exp^{2 \pi i\frac{k_1}{L} y_1} \Exp^{2 \pi i\frac{k_2}{L} y_2} \rd y_1 \rd y_2 \right| \chi_{Q_\alpha}(\bx) \\
 \leq \frac{L^2}{\pi^2 h^2} \sum_{\stackrel{|k| \geq \kappa_N}{k_1 \neq 0, k_2 \neq 0}} |\hat{\bu}_{k}| \frac{1}{|k_1| |k_2|} \leq \frac{L^2}{\pi^2 h^2} \frac{|\bq|_{L^2}}{|\Omega|^{1/2}} \left[ \sum_{\stackrel{|k| \geq \kappa_N}{k_1 \neq 0, k_2 \neq 0}} \frac{1}{k_1^2 k_2^2} \right]^{1/2} \\
 \leq  \frac{L}{\pi^2 h^2} |\bq|_{L^2} \left[ \sum_{|k_1| \geq \frac{\kappa_N}{2}, |k_2| \geq 1} \frac{1}{k_1^2 k_2^2} + \sum_{|k_2| \geq \frac{\kappa_N}{2}, |k_1| \geq 1} \frac{1}{k_1^2 k_2^2}\right]^{1/2} \\
 \leq \frac{L}{\pi^2 h^2} |\bq|_{L^2} \left[ \left( \sum_{|k_1| \geq \frac{\kappa_N}{2}} \frac{1}{k_1^2}\right) \left( \sum_{|k_2| \geq 1} \frac{1}{k_2^2}\right) + \left( \sum_{|k_2| \geq \frac{\kappa_N}{2}} \frac{1}{k_2^2}\right) \left( \sum_{|k_1| \geq 1} \frac{1}{k_1^2}\right) \right]^{1/2}\\
 \leq c \frac{L}{h^2} |\bq|_{L^2} \frac{1}{\kappa_N^{1/2}} \leq c \frac{L^{1/2}}{h^2} |\bq|_{L^2} \frac{1}{\lambda_{N+1}^{1/4}}.
\end{multline}

From \eqref{estabsvalueIhqx}-\eqref{estS3}, we obtain that
\be\label{estIhLinfty}
 |I_h(\bq)(x)| \leq c \frac{L^{1/2}}{h^2} |\bq|_{L^2} \frac{1}{\lambda_{N+1}^{1/4}}, \quad \forall \bx \in \Omega,
\ee
which proves \eqref{ineqpropintP3App}.
\end{proof}

% \begin{rmk}
% The proof of \eqref{propintP3App} in Proposition \ref{propAppendix} actually showed that $I_h$ satisfies a stronger inequality than \eqref{ineqpropintP3App}. Indeed, from \eqref{estIhLinfty} it follows that
% \[
% \|I_h(\bq)\|_{L^{\infty}} \leq c \frac{L^{1/2}}{h^2} |\bq|_{L^2} \frac{1}{\lambda_{N+1}^{1/4}}, \quad \forall \bq \in Q_N H.
% \]
% \end{rmk}

\end{document}